\newcommand{\R}{\mathbb{R}}
\newcommand{\N}{\mathbb{N}}
\newcommand{\Z}{\mathbb{Z}}
\newcommand{\Ha}{\mathcal{H}}
\newcommand{\Le}{\mathcal{L}}
\newcommand{\M}{\mathbf{M}}
\newcommand{\sym}{\mathrm{sym}}
\newcommand{\loc}{\mathrm{loc}}
\newcommand{\BV}{\mathrm{BV}}
\newcommand{\grad}{\nabla}
\newcommand{\ncrt}{\mathcal{C}}
\newcommand{\transpose}{\mathsf{T}}
\let\div\relax\DeclareMathOperator{\div}{div}
\DeclareMathOperator{\dist}{dist}
\DeclareMathOperator{\supp}{supp}
\DeclareMathOperator{\curl}{curl}
\DeclareMathOperator{\tr}{tr}
\DeclareMathOperator*{\esssup}{ess \, sup}
\newcommand{\blank}{{\mkern 2mu\cdot\mkern 2mu}}
\newcommand{\scp}[2]{\left\langle #1, #2 \right\rangle}
\newcommand{\dd}[2]{\frac{\partial #1}{\partial #2}}
\newcommand{\set}[2]{\left\{ #1 \colon #2 \right\}}
\newcommand{\restr}{\mathchoice
{\kern2pt\mbox{\vrule width 0.08ex height1.5ex depth0ex\kern-0.08ex\vrule width 1.5ex height.08ex depth0ex}\kern2pt}
{\kern2pt\mbox{\vrule width 0.08ex height1.5ex depth0ex\kern-0.08ex\vrule width 1.5ex height.08ex depth0ex}\kern2pt}
{\kern1.5pt\mbox{\vrule width 0.06ex height1.1ex depth0ex\kern-0.06ex\vrule width 1.1ex height.06ex depth0ex}\kern1.5pt}
{\kern1pt\mbox{\vrule width 0.04ex height0.75ex depth0ex\kern-0.04ex\vrule width 0.75ex height.04ex depth0ex}\kern1pt}
}
\newtheorem{theorem}{Theorem}
\newtheorem{lemma}[theorem]{Lemma}
\newtheorem{proposition}[theorem]{Proposition}
\newtheorem{corollary}[theorem]{Corollary}
\newtheorem{question}[theorem]{Question}
\theoremstyle{definition}
\newtheorem{definition}[theorem]{Definition}
\newtheorem{example}[theorem]{Example}
\newtheorem*{acknowledgement}{Acknowledgement}
\theoremstyle{remark}
\newtheorem*{notation}{Notation}{}
\begin{document}

\title{Asymptotic minimality of one-dimensional transition profiles in Aviles-Giga type models: an approach via $1$-currents}

\author{Radu Ignat\footnote{Institut de Math\'ematiques de Toulouse, UMR 5219, Universit\'e de Toulouse,
CNRS, UPS IMT, F-31062 Toulouse Cedex 9, France. E-mail: Radu.Ignat@math.univ-toulouse.fr} \ 
and Roger Moser\footnote{University of Bath, Bath, BA2 7AY, UK. E-mail: r.moser@bath.ac.uk}}

\maketitle

\begin{abstract}
We study the asymptotic behaviour
of Modica-Mortola (or Allen-Cahn) type functionals for two-dimensional
vector fields under the assumption that their divergences converge to $0$ at a certain rate,
which effectively produces a model of Aviles-Giga type. This problem will typically give
rise to transition layers, which degenerate into discontinuities in the limit.
We analyse the energy concentration at these discontinuities and the structure
of the asymptotically minimal transition profiles.

We derive an estimate for the energy concentration in terms of a novel
geometric variational problem involving the notion of $\R^2$-valued $1$-currents
from geometric measure theory. This in turn leads to criteria, under which
the energetically favourable transition profiles are essentially one-dimensional.
\end{abstract}

\section{Introduction}

\subsection{The problem}

Let $\Omega \subseteq \R^2$ be an open domain. Suppose that
$W \colon \R^2 \to [0, \infty)$ is a locally H\"older continuous function. For
two-dimensional vector fields $u \colon \Omega \to \R^2$ and for $\epsilon > 0$,
consider a Modica-Mortola (or Allen-Cahn) type functional of the form
\[
E_\epsilon(u; \Omega) = \frac{1}{2} \int_\Omega \left(\epsilon |D u|^2 + \frac{1}{\epsilon} W(u)\right) \, dx.
\]
We are interested in the asymptotic behaviour of a family of vector fields
$u_\epsilon$ such that
\begin{equation} \label{eq:energy-and-divergence}
\limsup_{\epsilon \searrow 0} \left(E_\epsilon(u_\epsilon; \Omega) + \epsilon^{-2\tau} \|\div u_\epsilon\|_{L^s(\Omega)}^2\right) < \infty
\end{equation}
for some $\tau > 0$ and some $s > 2$. This is relevant in the context of
models that combine a Modica-Mortola type energy functional with a divergence
penalisation of the above form, or even with the constraint $\div u = 0$.

As in the classical theory of Modica and Mortola \cite{Modica-Mortola:77.1, Modica-Mortola:77.2} (which has been extended by Modica \cite{Modica:87.1} and
Sternberg \cite{Sternberg:88}),
the above conditions typically imply
convergence of a subsequence to a limit $u_0 \colon \Omega \to \R^2$ that
takes values in $W^{-1}(\{0\})$ almost everywhere. (Also see
the related work on Aviles-Giga type functionals
\cite{Ambrosio-DeLellis-Mantegazza:99, DeSimone-Kohn-Mueller-Otto:01, Jabin-Perthame:01, Jabin-Otto-Perthame:02}.)
In addition, the limit will satisfy
$\div u_0 = 0$. Transitions between different zeroes of $W$ are possible, but will require
a certain amount of energy.

Depending on the exact structure of the potential function $W$, the
vector field $u_0$ may belong to
$\BV(\Omega; \R^2)$, or may belong to a larger space, but even then we typically have a countably $1$-rectifiable jump
set $J \subseteq \Omega$, where the values of $u_0$ jump from one value to another \cite{DeLellis-Otto:03}.
More precisely, this set is characterised by the behaviour of a blow-up
around a point $x_0 \in J$: choose a sequence $r_k \searrow 0$
such that the functions $x \mapsto u_0(x_0 + r_k x)$ converge, say in $L_\loc^1(\R^2; \R^2)$,
to the limit $v \colon \R^2 \to \R^2$. Let $\nu \in S^1$ be one of the approximate normal
vectors to $J$ at $x_0$ (which exist almost everywhere with respect to the
$1$-dimensional Hausdorff measure by the countable rectifiability).
Then there exist $a^-, a^+ \in W^{-1}(\{0\})$ such that $v(x) = a^+$ when
$\nu \cdot x > 0$ and $v(x) = a^-$ when $\nu \cdot x < 0$.

A blow-up is useful, too, when we want to understand how much energy
will be concentrated at a point $x_0 \in J$ in the limit as
$\epsilon \searrow 0$; or in other words, how much energy is required to
generate a transition between $a^-$ and $a^+$. Suppose that we rescale
the vector fields $u_\epsilon$ similarly. Then we can expect that the limit
$u_0 \colon \R^2 \to \R^2$ is already of the form
\begin{equation} \label{eq:limiting-vector-field}
u_0(x) = \begin{cases}
a^+ & \text{if $\nu \cdot x > 0$}, \\
a^- & \text{if $\nu \cdot x < 0$}.
\end{cases}
\end{equation}
Since $u_0$ must be divergence free under the conditions we are interested in,
we expect that $\nu \perp (a^+ - a^-)$.

The density of the energy concentrated at a corresponding jump point is measured by the
quantity
\[
\frac{1}{2} \liminf_{\epsilon \searrow 0} E_\epsilon(u_\epsilon; B_1(0)),
\]
where $B_r(x)$ denotes the open disc of radius $r > 0$ centred at $x \in \R^2$,
and where we multiply with the factor $\frac{1}{2}$ to compensate for the fact that $B_1(0)$ contains a jump set of length $2$.
For $a^- \neq a^+$, we therefore consider the set
$\mathcal{U}(a^-, a^+)$, comprising all families $(u_\epsilon)_{\epsilon > 0}$ of vector fields $u_\epsilon \in W^{1, 2}(B_1(0); \R^2)$ such that
$u_\epsilon \to u_0$ in $L^1(B_1(0); R^2)$ and such that there exist
$\tau > 0$ and $s > 2$ with the property that
\begin{equation} \label{eq:small-divergence}
\lim_{\epsilon \searrow 0} \epsilon^{-\tau} \|\div u_\epsilon\|_{L^s(B_1(0))} = 0,
\end{equation}
where $u_0$ is defined as in \eqref{eq:limiting-vector-field} with $\nu = (a^- - a^+)^\perp/|a^- - a^+|$.
(We disregard the possibility that $\nu$ may point in the opposite direction, because that situation may
be reduced to this one by applying reflections in the domain and codomain and adjusting $W$ accordingly.)
Then we define
\[
\mathcal{E}(a^-, a^+) = \frac{1}{2} \inf\set{\liminf_{\epsilon \searrow 0} E_\epsilon(u_\epsilon; B_1(0))}{(u_\epsilon)_{\epsilon > 0} \in  \mathcal{U}(a^-, a^+)}.
\]

One important question is whether the same infimum is obtained when we
consider only one-dimensional, divergence-free transition profiles, i.e.,
vector fields of the form $u_\epsilon(x) = a^- + w_\epsilon(x \cdot \nu) (a^+ - a^-)$
for some functions $w_\epsilon \colon \R \to \R$. Under the typical
assumptions on $W$, the resulting number is easy to compute with the methods
from the Modica-Mortola theory and is
\[
\int_{[a^-, a^+]} \sqrt{W} \, d\Ha^1,
\]
where $[a^-, a^+]$ denotes the line segment connecting $a^-$ with $a^+$ and
$\Ha^1$ stands for the $1$-dimensional Hausdorff measure.

This question is the main focus of this paper, and it can be formulated as follows.

\begin{question} \label{qst:1D}
Under what conditions is
\[
\mathcal{E}(a^-, a^+) = \int_{[a^-, a^+]} \sqrt{W} \, d\Ha^1?
\]
\end{question}

\subsection{Main results}

We now fix the points $a^-$ and $a^+$. It is convenient to assume that
$\nu = (\begin{smallmatrix} 1 \\ 0 \end{smallmatrix})$, and thus $a_1^- = a_1^+$,
as this will simplify the presentation of our results. The general situation
can always be reduced to this case by a change of coordinates, so there is no
loss of generality.

We also assume that $W$ has a specific polynomial rate of growth as $|y| \to \infty$.
More precisely, we assume that there exist certain constants $c_1, c_2 > 0$
and $\bar{p} > 0$ such that
\begin{equation} \label{eq:growth-W}
c_1|y|^{2\bar{p}} - 1 \le W(y) \le c_2(|y|^{2\bar{p}} + 1)
\end{equation}
for all $y \in \R^2$.

To formulate our first result, we need to introduce some tools, including the notion
of $\R^2$-valued $1$-currents. This is a variant of a standard concept from geometric
measure theory. Its definition is normally given in terms of differential
forms in $\R^2$, but for our purpose, the following, equivalent definition is just as convenient.
Here we use the notation $|M|$ for the Frobenius norm of a matrix $M \in \R^{2 \times 2}$. We will later also use the notation $M :N$ for the Frobenius
inner product.

\begin{definition} \label{def:current-intro}
An \emph{$\R^2$-valued $1$-current on $\R^2$} is an element of the dual space of
$C_0^\infty(\R^2; \R^{2 \times 2})$. If $T$ is an $\R^2$-valued $1$-current in $\R^2$,
then its \emph{boundary} $\partial T$ is the
$\R^2$-valued distribution such that $\partial T(\xi) = T(D\xi)$
for every $\xi \in C_0^\infty(\R^2; \R^2)$. We say that $T$ is \emph{normal} if there
exists $C \ge 0$ such that
\[
T(\zeta) + \partial T(\xi) \le C\sup_{x \in \R^2} \bigl(|\zeta(x)| + |\xi(x)|\bigr)
\]
for all $\zeta \in C_0^\infty(\R^2; \R^{2 \times 2})$ and all $\xi \in C_0^\infty(\R^2; \R^2)$.
\end{definition}

We are particularly interested in normal $\R^2$-valued $1$-currents $T$ with a specific boundary,
given by the condition that
\[
\partial T(\xi) = \xi_1(a^+) - \xi_1(a^-)
\]
for all $\xi = (\begin{smallmatrix} \xi_1 \\ \xi_2 \end{smallmatrix})\in C_0^\infty(\R^2; \R^2)$. We write $\ncrt_{2 \times 2}^0$ for the set of
all normal currents with this boundary.

Given any normal $\R^2$-valued $1$-current $T$, there always exist a Radon measure $\|T\|$ on $\R^2$ and a
$\|T\|$-measurable, matrix-valued function $\vec{T} \colon \R^2 \to \R^{2 \times 2}$ with
$|\vec{T}| = 1$ almost everywhere, such that
\[
T(\zeta) = \int_{\R^2} \zeta : \vec{T} \, d\|T\|
\]
for any $\zeta \in C_0^\infty(\R^2; \R^{2 \times 2})$.

The following is an example of a current with some relevance for our results. Define
$T^0 \in \ncrt_{2 \times 2}^0$ by the condition that
\begin{equation} \label{eq:line-segment}
T^0(\zeta) = \int_{[a^-, a^+]} \zeta : \begin{pmatrix} 0 & 1 \\ 0 & 0 \end{pmatrix} \, d\Ha^1
\end{equation}
for $\zeta \in C_0^\infty(\R^2; \R^{2 \times 2})$. Then $\|T^0\| = \Ha^1 \restr [a^-, a^+]$
and $\vec{T}^0 = (\begin{smallmatrix} 0 & 1 \\ 0 & 0 \end{smallmatrix})$ almost everywhere.
(If we consider the rows of $\vec{T}$ separately, we obtain two conventional,
$\R$-valued $1$-currents, which we can interpret as the components of $T$.
The first component of $T^0$ is a representation of the oriented line segment
between $a^-$ and $a^+$, whereas the second component vanishes.)

We now consider the function
$F^* \colon \R^2 \times \R^{2 \times 2} \to \R \cup \{\infty\}$ such that
\[
F^*(y, N) = \begin{cases}
\frac{1}{4} W(y) \max\{|N|^2 - 2\det N, (n_{12} - n_{21})^2\} & \text{if $\tr N = 0$}, \\
\infty & \text{else},
\end{cases}
\]
where we write $N = (\begin{smallmatrix} n_{11} & n_{12} \\ n_{21} & n_{22} \end{smallmatrix})$.
For any $T \in \ncrt_{2 \times 2}^0$, we define
\[
\M_F(T) = \int_{\R^2} \sqrt{F^*(y, \vec{T}(y))} \, d\|T\|(y).
\]
(This is a variant of the mass that is normally associated to a current. The connections
will become more apparent in Section \ref{sct:L-infinity} below.)

We have the following results.

\begin{theorem} \label{thm:main}
The inequality
\[
\mathcal{E}(a^-, a^+) \ge 2\inf_{T \in \ncrt_{2 \times 2}^0} \M_F(T)
\]
holds true.
\end{theorem}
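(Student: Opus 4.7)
Given a family $(u_\epsilon) \in \mathcal{U}(a^-, a^+)$, the plan is to associate to each $u_\epsilon$ an $\R^2$-valued $1$-current $T_\epsilon$ on $\R^2$, extract a weak limit $T \in \ncrt_{2\times 2}^0$, and prove
\[
\M_F(T) \le \tfrac{1}{4} \liminf_{\epsilon \searrow 0} E_\epsilon(u_\epsilon; B_1(0));
\]
the conclusion then follows from the definition of $\mathcal{E}(a^-, a^+)$. A natural candidate is the ``pushforward'' current
\[
T_\epsilon(\zeta) := \frac{1}{2}\int_{B_1(0)} \zeta(u_\epsilon) : (Du_\epsilon)^T\, dx, \qquad \zeta \in C_0^\infty(\R^2; \R^{2 \times 2}).
\]
By the chain rule and the divergence theorem, the first scalar component of $\partial T_\epsilon$ acts on $\xi \in C_0^\infty(\R^2)$ as $\tfrac{1}{2}\int_{\partial B_1(0)} \xi(u_\epsilon)\, x_1\, d\Ha^1$, which via $u_\epsilon \to u_0$ in $L^1(B_1(0))$ and the elementary integrals $\int_{\partial B_1(0) \cap \{\pm x_1 > 0\}} x_1 \, d\Ha^1 = \pm 2$ converges to $\xi(a^+) - \xi(a^-)$; the second component, involving $x_2$ instead of $x_1$, vanishes in the limit by the symmetry $\int_{\partial B_1(0) \cap \{\pm x_1 > 0\}} x_2 \, d\Ha^1 = 0$ on each semicircle. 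Thus the boundary requirement of $\ncrt_{2 \times 2}^0$ is met in the limit. Young's inequality $\epsilon |Du_\epsilon|^2 + W(u_\epsilon)/\epsilon \ge 2 \sqrt{W(u_\epsilon)} |Du_\epsilon|$ together with \eqref{eq:energy-and-divergence} gives a uniform mass bound on $T_\epsilon$; the compactness theorem for normal currents then produces a weak subsequential limit $T$, for which continuity of $\partial$ yields $T \in \ncrt_{2 \times 2}^0$.

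The crucial step is $\M_F(T) \le \tfrac{1}{4}\liminf E_\epsilon$. Since $\M_F(T_\epsilon)$ is typically infinite---$\vec{T}_\epsilon$ inherits a trace proportional to $\div u_\epsilon$ and $F^*$ is infinite off traceless matrices---I would pass through the dual description $\M_F(T) = \sup\{T(\zeta) : h^*(y, \zeta(y)) \le 1 \ \forall y\}$, where $h^*(y, \cdot)$ is the pointwise Finsler dual of $\sqrt{F^*(y, \cdot)}$ restricted to traceless matrices. For admissible $\zeta$, decompose $(Du_\epsilon)^T = A_\epsilon + \tfrac{1}{2}(\div u_\epsilon)\, I$ with $A_\epsilon$ traceless; then $\zeta(u_\epsilon) : A_\epsilon \le \sqrt{F^*(u_\epsilon, A_\epsilon)}$ by definition of $h^*$, and combining a sharp pointwise estimate of $\sqrt{F^*(y, A)}$ with Young's inequality gives $T_\epsilon(\zeta) \le \tfrac{1}{4} E_\epsilon(u_\epsilon; B_1(0)) + o(1)$, the trace correction being controlled by $\|\div u_\epsilon\|_{L^s} = o(1)$ via \eqref{eq:small-divergence}. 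Weak convergence then yields $T(\zeta) \le \tfrac{1}{4}\liminf E_\epsilon$, and taking the supremum over $\zeta$ gives the required bound, so $\mathcal{E}(a^-, a^+) \ge 2\M_F(T) \ge 2\inf_{T'}\M_F(T')$.

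The principal obstacle is the sharp constant in the pointwise step. A naive Frobenius-Cauchy-Schwarz bound $\sqrt{F^*(y, A)} \le \sqrt{W(y)/2}\,|A|$, attained on symmetric or antisymmetric traceless $A$, is loose by a factor of $\sqrt 2$; the tight inequality $\sqrt{F^*(y, A)} = \tfrac{1}{2}\sqrt{W(y)}\,|A|$ holds only on rank-one traceless $A$ (those with $\det A = 0$), precisely the structure of the gradient of the optimal $1$D transition (for which the bound is attained exactly). Recovering the sharp constant $\tfrac{1}{4}$ for generic $u_\epsilon$ requires genuinely exploiting the anisotropic ``$\max$'' structure of $F^*$ through the Finsler dual on $\zeta$, balancing the $|N|^2 - 2\det N$ mode against the $(N_{12} - N_{21})^2$ mode, while simultaneously controlling the trace correction from $\div u_\epsilon$; this anisotropy, and its precise compatibility with the Modica-Mortola Young inequality, is the technical heart of the argument.
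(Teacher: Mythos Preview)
Your approach is fundamentally different from the paper's, and the gap you yourself flag is fatal rather than merely technical. The paper never constructs a current from $u_\epsilon$; instead it produces a \emph{calibration} $\Phi$ with $f(D\Phi)\le W$ and $\Phi_1(a^+)-\Phi_1(a^-)\ge 2\inf_T\M_F(T)$ (Theorem~\ref{thm:existence-of-calibration}, proved via an $L^\infty$ variational problem and its duality with currents), and then uses the calibration inequality (Lemma~\ref{lem:calibration-implies-inequality}) to bound $\mathcal{E}(a^-,a^+)$ from below.

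The reason your direct route cannot close is this: the pointwise inequality you need, namely
\[
\zeta(u):(Du)^T \;\le\; \tfrac{\epsilon}{2}|Du|^2+\tfrac{1}{2\epsilon}W(u)\quad\text{whenever }F(u,\zeta(u))\le 1,
\]
is simply false for general $\zeta$ and $u$. As you observe, for a trace-free matrix $A$ one has $4F^*(y,A)/W(y)=\max\{|A|^2-2\det A,(a_{12}-a_{21})^2\}\ge |A|^2$, with equality only when $\det A=0$; so $2\sqrt{F^*(u,A_\epsilon)}\ge \sqrt{W(u)}\,|A_\epsilon|$, which is the wrong direction for comparison with Young's inequality. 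What \emph{does} hold (Proposition~\ref{prp:pointwise-characterisation}) is
\[
\Xi(u):(Du)^T \;\le\; \tfrac{\epsilon}{2}|Du|^2+\tfrac{1}{2\epsilon}W(u)+\epsilon\,\sigma(u)\det Du,
\]
for suitable $|\sigma|\le 1$, and the extra $\det Du$ term is a null Lagrangian that vanishes only after integration by parts --- but this requires $\Xi$ to be of the special form $D\Phi+\alpha I$ so that the left-hand side is a divergence. Restricting test matrices $\zeta$ to gradients $D\Phi$ is therefore not a loss of generality that can be avoided; it is the mechanism that supplies the missing null-Lagrangian correction. The substantial content of the paper (Sections~\ref{sct:L-infinity}--\ref{sct:regularisation}) is precisely to show, via duality, that restricting to gradients still recovers the full infimum $2\inf_T\M_F(T)$.

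A secondary issue: the boundary computation $\partial T_\epsilon(\xi)\to\xi_1(a^+)-\xi_1(a^-)$ relies on convergence of $u_\epsilon$ on $\partial B_1(0)$, but you only have $u_\epsilon\to u_0$ in $L^1(B_1(0))$, which says nothing about traces.
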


\begin{corollary} \label{cor:1D}
Let $T^0 \in \ncrt_{2 \times 2}^0$ be given by \eqref{eq:line-segment}.
If $T^0$ is a minimiser of $\M_F$ in $\ncrt_{2 \times 2}^0$
(i.e., if $\M_F(T^0) \le \M_F(T)$ for every $T \in \ncrt_{2 \times 2}^0$), then
\[
\mathcal{E}(a^-, a^+) = \int_{[a^-, a^+]} \sqrt{W} \, d\Ha^1.
\]
\end{corollary}

Thus we may be able to give an answer to Question \ref{qst:1D} by
solving a different variational problem involving currents. Since currents
can be interpreted geometrically, that variational problem is geometric in nature.
It is also rather unusual because of the structure of the above function $F^*$.
It may be difficult to solve in general, but we can give some estimates that
allow further conclusions.

For $j = 0, 1, 2, \dotsc$, let $C_{\bar{p}}^j(\R^2)$ denote the space of all $\phi \in C^j(\R^2)$ such
that there exists a constant $C \ge 0$ satisfying $|D^k\phi(y)| \le C(|y|^{\bar{p} - k} + 1)$ for
all $y \in \R^2$ and $k = 0, \dotsc, j$. We now have the following result.

\begin{corollary} \label{cor:PDE}
Suppose that $W = w^2$, and suppose that there exist Borel functions
$\iota, \kappa, \lambda \colon \R^2 \to [-1, 1]$ with
\[
\iota^2 \le \min\{1 - \lambda^2, (1 + \kappa)(1 - \lambda), (1 - \kappa)(1 + \lambda)\},
\]
such that $\iota w, \kappa w, \lambda w \in C^2(\R^2) \cap C_{\bar{p}}^1(\R^2)$ and
\begin{equation} \label{eq:PDE-criterion}
\frac{\partial^2}{\partial y_1^2}(\kappa w) = \frac{\partial^2}{\partial y_2^2}(\lambda w) + 2\frac{\partial^2}{\partial y_1 \partial y_2}(\iota w).
\end{equation}
Then
\[
\mathcal{E}(a^-, a^+) \ge \int_{a_2^-}^{a_2^+} (\kappa w)(a_1^-, t) \, dt
\]
for any $T \in \ncrt_{2 \times 2}^0(\R^2)$.
\end{corollary}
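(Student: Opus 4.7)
The plan is to invoke Theorem~\ref{thm:main}, which reduces the claim to showing
\[
2\M_F(T) \ge \int_{a_2^-}^{a_2^+}(\kappa w)(a_1^-, t)\,dt
\]
for every $T \in \ncrt_{2\times 2}^0$. One may assume $\M_F(T) < \infty$; then $\tr\vec{T} = 0$ $\|T\|$-almost everywhere, and writing $\vec{T} = (\begin{smallmatrix} a & b\\ c & -a\end{smallmatrix})$ gives $2\sqrt{F^*(y, \vec{T})} = w(y)\max\{\sqrt{4a^2+(b+c)^2},|b-c|\}$. Since $|\vec{T}| = 1$, the $\max$ is at least $1$, so $\frac{1}{2} \int w\,d\|T\| \le \M_F(T)$; combined with~\eqref{eq:growth-W}, this yields $\int(1+|y|^{\bar{p}})\,d\|T\| < \infty$ and $R^{\bar{p}}\|T\|(\R^2\setminus B_R) \to 0$ as $R \to \infty$.

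The hypothesis~\eqref{eq:PDE-criterion} can be recast as
\[
\partial_1\bigl(\partial_1(\kappa w) - \partial_2(\iota w)\bigr) = \partial_2\bigl(\partial_2(\lambda w) + \partial_1(\iota w)\bigr),
\]
expressing that $V := (\partial_2(\lambda w)+\partial_1(\iota w),\, \partial_1(\kappa w)-\partial_2(\iota w))$ is curl-free on $\R^2$. Hence $V = \nabla g$ for some $g \in C^2(\R^2)$ with $|g| \le C(1+|y|^{\bar{p}})$. Integrating, we construct $\Phi, \Psi \in C^2(\R^2)$ satisfying
\[
\partial_1\Phi = \iota w + g,\quad \partial_2\Phi = \kappa w,\quad \partial_1\Psi = \lambda w,\quad \partial_2\Psi = g - \iota w,
\]
the two defining identities for $\nabla g$ being exactly the mixed-partial compatibilities required. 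Since $a_1^+ = a_1^-$, integrating $\partial_2\Phi$ along the segment $\{a_1^-\}\times[a_2^-, a_2^+]$ gives $\Phi(a^+) - \Phi(a^-) = \int_{a_2^-}^{a_2^+}(\kappa w)(a_1^-, t)\,dt$; moreover $|\Phi|, |\Psi| \le C(1+|y|^{\bar{p}+1})$ and $|D\Phi|, |D\Psi| \le C(1+|y|^{\bar{p}})$.

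Set $\Xi = (\Phi, \Psi)$. Writing $A := \partial_1\Phi - \partial_2\Psi = 2\iota w$, $B := \partial_2\Phi + \partial_1\Psi = (\kappa+\lambda)w$ and $C := \partial_2\Phi - \partial_1\Psi = (\kappa-\lambda)w$, the key pointwise estimate is $D\Xi : N \le 2\sqrt{F^*(y, N)}$ for every trace-free $N \in \R^{2\times 2}$. After the change of variables $\alpha = 2n_{11}$, $\beta = n_{12}+n_{21}$, $\gamma = n_{12}-n_{21}$, this reads $\frac{1}{2}(A\alpha + B\beta + C\gamma) \le w\max\{\sqrt{\alpha^2+\beta^2},|\gamma|\}$ for every $(\alpha, \beta, \gamma) \in \R^3$, which by duality of the norm $(\alpha, \beta, \gamma) \mapsto \max\{\sqrt{\alpha^2+\beta^2}, |\gamma|\}$ is equivalent to $\sqrt{A^2+B^2} + |C| \le 2w$. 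Substituting and squaring reduces this further to $\iota^2 \le 1 - |\kappa-\lambda| - \kappa\lambda = \min\{(1+\kappa)(1-\lambda),(1-\kappa)(1+\lambda)\}$, which is the essential part of the hypothesis; the pointwise bound therefore holds $\|T\|$-a.e.

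Finally, we promote this to an integrated inequality; the main technical obstacle here is the non-compact support of $\Phi, \Psi$. Choose $\chi_R \in C_0^\infty(\R^2)$ with $\chi_R \equiv 1$ on $B_R$, $\supp \chi_R \subset B_{2R}$ and $|D\chi_R| \le C/R$. For $R$ large enough that $a^\pm \in B_R$, the boundary identity for $T$ gives
\[
T\bigl(D(\chi_R \Xi)\bigr) = \Phi(a^+) - \Phi(a^-) = \int_{a_2^-}^{a_2^+}(\kappa w)(a_1^-, t)\,dt,
\]
while expanding $D(\chi_R \Xi) = \chi_R D\Xi + \Xi \otimes D\chi_R$ and invoking the pointwise bound yields
\[
T\bigl(D(\chi_R \Xi)\bigr) \le 2\M_F(T) + \int|\Xi||D\chi_R|\,d\|T\|.
\]
The error term is supported in $B_{2R}\setminus B_R$ where $|\Xi||D\chi_R| \le C R^{\bar{p}}$, so it is bounded by $CR^{\bar{p}}\|T\|(\R^2 \setminus B_R)$, which tends to $0$ by the first paragraph. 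Letting $R\to\infty$ gives the desired inequality, and Theorem~\ref{thm:main} concludes.
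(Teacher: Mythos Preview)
Your proof is correct, and it takes a genuinely different (and more direct) route than the paper.

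The paper proves Corollary~\ref{cor:PDE} by invoking Theorem~\ref{thm:decomposition}: it decomposes the first component $T_1$ into Lipschitz curves via the Bonicatto--Gusev theorem, reduces the estimate to bounding the curve functional $Z_{\lambda,h}$, and then uses the chain of Lemmas~\ref{lem:f^*-estimate}, \ref{lem:convex-envelope}, \ref{lem:subdifferential} (the analysis of the auxiliary function $\Theta_\lambda$) together with a carefully constructed $h$ to finish. Your argument bypasses all of this machinery. You observe that the PDE criterion~\eqref{eq:PDE-criterion} is exactly the integrability condition allowing you to build an explicit map $\Xi=(\Phi,\Psi)$ with $D\Xi : N \le 2\sqrt{F^*(y,N)}$ for trace-free $N$; the algebraic verification reduces to the dual-norm inequality $\sqrt{4\iota^2+(\kappa+\lambda)^2}+|\kappa-\lambda|\le 2$, which is precisely the hypothesis on $\iota,\kappa,\lambda$ (indeed only the last two of the three bounds on $\iota^2$ are needed). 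Integrating against $T$ with a cutoff and using the moment bound $\int |y|^{\bar p}\,d\|T\|<\infty$ then gives $2\M_F(T)\ge \Phi(a^+)-\Phi(a^-)$ directly.

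What your approach buys is brevity and conceptual clarity: it exhibits the PDE condition as the compatibility condition for a calibration, and in fact your $\Xi$ satisfies $f(D\Xi)\le W$, so you could even skip Theorem~\ref{thm:main} and appeal straight to Lemma~\ref{lem:calibration-implies-inequality}. What the paper's approach buys is generality: Theorem~\ref{thm:decomposition} gives a bound on $\M_F(T)$ as an infimum over curves for \emph{any} choice of $\lambda$ and $h$, which is potentially useful in situations where no explicit calibration is available (cf.\ the discussion of non-one-dimensional profiles in Section~\ref{sct:examples}). One cosmetic point: you should write $|w|$ rather than $w$ in the pointwise bound $2\sqrt{F^*}=|w|\max\{\cdots\}$, though this does not affect the final inequality.
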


If in addition, we know that $\kappa(a_1^-, y_2) = 1$ for all $y \in [a_2^-, a_2^+]$, then
it follows, of course, that
\[
\mathcal{E}(a^-, a^+) = \int_{[a^-, a^+]} \sqrt{W} \, d\Ha^1.
\]
Thus in this case, we again obtain an answer to Question \ref{qst:1D}.

It may not be obvious in general whether a given potential function
permits functions $\iota, \kappa, \lambda$ as in Corollary \ref{cor:PDE},
but we give some examples in Section \ref{sct:examples} below, where the
result applies. This includes the potential $W(y) = (1 - |y|^2)^2$
from the classical Aviles-Giga functional \cite{Aviles-Giga:87}, but also
the generalisation $W(y) = |y|^{4n} (1 - |y|^{2m})^2$ for arbitrary $n, m \in \N$,
at least in the case of transitions between the points $a^- = (0, -1)$
and $a^+ = (0, 1)$. We have a similar result for $W(y) = (1 - |y|^2)^{2\beta}$ for $\beta \in (0, 1)$, but only when we restrict the class of admissible
vector fields by the condition
$|u| \le 1$ (see Corollary \ref{cor:examples} for both examples).

Corollary \ref{cor:PDE} is a consequence of another, more general estimate, which may be more
useful in certain situations. Since the statement is also more technical, however, we postpone
the formulation to Section \ref{sct:geometric} (see Theorem~\ref{thm:decomposition}).

\subsection{Background}

Problems like the above are relevant for a number of physical systems, including micromagnetics
\cite{Hubert-Schaefer:98}, smectic-A liquid crystals \cite{Kleman-Parodi:75, Aviles-Giga:87}, thin film blisters
\cite{Ortiz-Gioia:94}, or crystal surfaces \cite{Stewart-Goldenfeld:92}. Such models typically
arise when a Ginzburg-Landau type energy functional is combined with a divergence penalisation,
or is applied to a gradient vector field. Indeed, if we consider a quantity such as
\begin{equation} \label{eq:Aviles-Giga}
\frac{1}{2} \int_\Omega \left(\epsilon |D^2 \phi|^2 + \frac{1}{\epsilon} W(D\phi)\right) \, dx,
\end{equation}
then the identification $u = \nabla^\perp \phi$ will give rise to $E_\epsilon(u; \Omega)$, and
in this case, we even have the condition $\div u = 0$. The integral in \eqref{eq:Aviles-Giga} gives a variant
of the Aviles-Giga functional \cite{Aviles-Giga:87}.

Despite its importance, remarkably little is known about Question \ref{qst:1D}, let alone about
how to determine $\mathcal{E}(a^-, a^+)$ in general, with the exception of some special cases.
It can happen, of course, that the constructions from the vector-valued Modica-Mortola problem
\cite{Sternberg:88, Baldo:90} happen to be divergence free, in which case they also provide a solution to
the above problem. Otherwise, only the case of the classical Aviles-Giga functional, which corresponds to
$W(y) = (1 - |y|^2)^2$, has a reasonably comprehensive theory. One of the key contributions is
of Jin and Kohn \cite{Jin-Kohn:00}, who (among other things) determined the value of $\mathcal{E}(a^-, a^+)$
in this situation. Without attempting to give a complete list, we mention some
other noteworthy contributions to this theory
\cite{Ambrosio-DeLellis-Mantegazza:99, DeSimone-Kohn-Mueller-Otto:01, Jabin-Perthame:01, Jabin-Otto-Perthame:02}.

More general potential functions have been studied by Ignat and Monteil \cite{Ignat-Monteil:20}.
In particular, they give some results similar to Corollary \ref{cor:PDE} (although weaker), which they
prove with methods different from what we use here.

Theorem \ref{thm:main} and its corollaries (including Theorem \ref{thm:decomposition} in Section \ref{sct:geometric}
below) add a completely new tool to the study of these problems. The theorem provides an estimate for $\mathcal{E}(a^-, a^+)$
in terms of another variational problem, which is geometric in nature, and whose connection to
the functionals $E_\epsilon$ is far from obvious. That variational problem is difficult to solve
in general, but this novel connection is clearly of theoretical value, and we show in
Section \ref{sct:examples} that it can be used to answer Question \ref{qst:1D} for some
examples where the problem was previously open.

There is then the obvious question of how to determine $\mathcal{E}(a^-, a^+)$ when the equality
from Question \ref{qst:1D} is \emph{not} satisfied. Almost nothing is known for this question in general,
although for some specific problems of a similar nature, it can be answered
\cite{Riviere-Serfaty:01, Riviere-Serfaty:03, Alouges-Riviere-Serfaty:02, Ignat-Moser:12}.
We provide no general results about this question here, but we give some examples in Section \ref{sct:examples}
which suggest that Theorem~\ref{thm:main} may be useful in this context, too.
We consider two specific constructions of transition profiles, one proposed by Jin and Kohn \cite{Jin-Kohn:00} and one arising in micromagnetics and known as a cross-tie wall \cite{DeSimone-Kohn-Mueller-Otto:03}. We compare their asymptotic energies
with the $F$-masses of specific $\R^2$-valued $1$-currents, the structures of
which mirror some features of the constructions, and find that they satisfy the
relationship suggested by Theorem \ref{thm:main}. It is not clear whether
these $1$-currents are minimisers of $\M_F$ for any interesting choice of $W$
(apart from $W(y) = (1 - |y|^2)^2$, which is already well understood),
but if they are, then it follows that the transition profiles are asymptotically
energy minimising.

There are some aspects of the theory that we implicitly take for granted in the formulation
of Question~\ref{qst:1D}. If we were to fully analyse the problem with respect to $\Gamma$-convergence, we would have to prove that
\begin{itemize}
\item the limiting energy is really concentrated on a countably $1$-rectifiable jump set,
where we can perform an appropriate blow-up, and
\item after the blow-up, we have convergence of a subsequence in $L^1(B_1(0); \R^n)$ to a limit $u_0$ as above.
\end{itemize}
That is, we would need some information about the structure of limit points and compactness of families
$(u_\epsilon)_{\epsilon > 0}$ satisfying \eqref{eq:energy-and-divergence}. Such information is relatively
easy to obtain when $W$ has only isolated zeroes, and results of this type are available for the potential function
$W(y) = (1 - |y|^2)^2$ (the Aviles-Giga functional) \cite{DeLellis-Otto:03, DeSimone-Kohn-Mueller-Otto:01}
and some generalisations thereof \cite{Bochard-Pegon:17, Lamy-Lorent-Peng:22}. Obviously, if we have such
results for a potential function $\tilde{W}$ such that $\tilde{W} \le CW$ for some constant $C > 0$, then the same follows for
$W$. Nevertheless, these questions are open in general and are not studied here.

\subsection{Strategy for the proofs and organisation of the paper}

Theorem \ref{thm:main} may appear mysterious at first, as the connection between the
energy $E_\epsilon$ and the $F$-mass $\M_F$ becomes apparent only when the
ingredients for the proof are known. For this reason, we give an informal overview
of the arguments here. At the same time, we explain how the paper is organised.

The first key idea in the proof is that of a `calibration' (also called `entropy' by some
authors, because of some analogy with entropies for conservation laws). This idea
goes back to the paper of Jin and Kohn \cite{Jin-Kohn:00}, but has been refined by
DeSimone, Kohn, M\"uller, and Otto \cite{DeSimone-Kohn-Mueller-Otto:01} and subsequently
studied by a number of authors
\cite{DeLellis-Otto:03, Ignat-Merlet:11, Ignat-Merlet:12, Ignat-Moser:12, Ignat-Monteil:20}.
The formulation that we use here is as follows. Let $\mathcal{L}(\R^2; \R^{2 \times 2})$
denote the space of linear maps $\R^2 \to \R^{2 \times 2}$. Suppose that there exist
$\Phi \in C^1(\R^2; \R^2)$, $\alpha \in C^0(\R^2)$, and $a \in C^1(\R^2; \mathcal{L}(\R^2; \R^{2 \times 2}))$
such that
\begin{equation} \label{eq:calibration0}
\div \Phi(u) + \alpha(u) \div u \le \frac{\epsilon}{2} |Du|^2 + \frac{1}{2\epsilon} W(u) + \epsilon \div(a(u) Du)
\end{equation}
for $\epsilon > 0$ and for all sufficiently regular vector fields $u \colon B_1(0) \to \R^2$.
Then it is not difficult to see, when we integrate over $B_1(0)$ and integrate by parts,
that we obtain an estimate of the form
\[
\mathcal{E}(a^-, a^+) \ge \Phi_1(a^+) - \Phi_1(a^-)
\]
under reasonable assumptions. Clearly, such an inequality is potentially useful for
answering Question \ref{qst:1D}.

But it is not clear at all how to find $\Phi$, $\alpha$, and $a$ in general, at least not
such that they give rise to a \emph{useful} estimate. (The choice $\Phi = 0$, $\alpha = 0$, and
$a = 0$ will always work, but the resulting estimate is trivial.) Good calibrations have
been constructed in special cases, most notably for the Aviles-Giga functional \cite{Jin-Kohn:00},
but no general construction is known.

In Section \ref{sct:differential-inequalities} we derive a condition that is equivalent, for a given
$\Phi$, to the existence of $\alpha$ and $a$ such that \eqref{eq:calibration0} holds true. The equivalence requires more regularity than we can typically
expect, but the condition is still useful more generally.
If we define the function
\[
f(M) = \frac{1}{2} \left(|M|^2 - \frac{1}{2} (\tr M)^2 + |m_{12} - m_{21}| \sqrt{|M|^2 - 2\det M}\right)
\]
for $M = (\begin{smallmatrix} m_{11} & m_{12} \\ m_{21} & m_{22} \end{smallmatrix}) \in \R^{2 \times 2}$,
then it takes the form of the inequality
\[
f(D\Phi) \le W
\]
(see Proposition \ref{prp:calibration}).
We use arguments inspired by the work of Ignat and Merlet \cite{Ignat-Merlet:11} in this step, but
we extend these ideas considerably.

This gives a convenient way to check whether a given function $\Phi$ gives rise to a calibration,
but still does not tell us how to construct one. But suppose that we want to find the best possible
calibration, which for our purposes means that $\Phi_1(a^+) - \Phi_1(a^-)$ should be as large as
possible. Then the above inequality suggests that we determine
\[
\eta_0 = \sup\set{\Phi_1(a^+) - \Phi_1(a^-)}{f(D\Phi) \le W}.
\]
If we can solve this variational problem, then we have the best estimate that can be
achieved with this approach.

It is convenient here to recast the problem in a different form. Define the function
\[
F(y, M) = \frac{f(M)}{W(y)}
\]
(assuming for the moment that $W(y) > 0$ for all $y \in \R^2$ and ignoring the fact that
Question \ref{qst:1D} is more interesting for a potential function with zeroes). Then
we may instead try to determine
\[
e_\infty = \inf\set{\|F(\blank, D\Phi)\|_{L^\infty(\R^2)}^{1/2}}{\Phi_1(a^+) - \Phi_1(a^-) = 1}.
\]
It is easy to see that $\eta_0 = 1/e_\infty$. We thus obtain a variational problem involving the
$L^\infty$-norm.

Very little is known about problems of this sort. For similar problems involving a \emph{scalar} function
(in place of the vector-valued $\Phi$), there is a body of literature going back to the work of Aronsson
\cite{Aronsson:65, Aronsson:66, Aronsson:67, Aronsson:68} and including papers by many other authors.
Once more we give an incomplete list \cite{Bhattacharya-DiBenedetto-Manfredi:89, Jensen:93, Savin:05, Evans-Savin:08}.
For vector-valued functions, this theory does not apply. There is some work by
Katzourakis \cite{Katzourakis:12, Katzourakis:13, Katzourakis:17.3}, but these results
do not tell us much about the solutions to the above problem. Fortunately, we do not need to know
anything about the structure of the solutions, we merely need to determine the number $e_\infty$.
For this purpose, the ideas of a recent paper by Katzourakis and Moser \cite{Katzourakis-Moser:25}
are useful. This paper treats only the case of the function $F(y, M) = \frac{1}{2} |M|^2$, but
the methods can be generalised, and this is what we do in Section \ref{sct:L-infinity}.
We can think of the results as a characterisation of the essential behaviour of the minimisers
through a dual problem, in our case that of minimising $\M_F$ for $\R^2$-valued $1$-currents.
The analysis has to be carried out for a regularised version of $F$, but then we can prove that there
exists a minimiser $T$ of $\M_F$ in $\ncrt_{2 \times 2}^0$ such that $\Phi_1(a^+) - \Phi_1(a^-) = 2\M_F(T)$ (see Theorem \ref{thm:current}).
This is where the inequality from Theorem \ref{thm:main} ultimately comes from.

Remarkably, even though calibrations are central to our approach, this result means that we do not
need to construct any calibrations in the end. We only need to know $\Phi_1(a^+) - \Phi_1(a^-)$, and
this information is encoded in the $\R^2$-valued $1$-current $T$.

As already mentioned, these arguments require a regularisation of $F$, and we need to make sure
that we can recover the relevant information when we relax the conditions on $F$ again.
This is the purpose of Section \ref{sct:regularisation}. At this point, the proof of Theorem \ref{thm:main}
is complete. But to make use of it, we have to study the problem of minimising $\M_F$ in $\ncrt_{2 \times 2}^0$.

This is the problem that we study in Section \ref{sct:geometric}. Superficially, it may look
deceptively simple. After all, we may think of $1$-currents as generalised curves in $\R^2$, and
$\M_F$ resembles an anisotropic version of the length functional. That is, we have a variant of
the problem of finding geodesics. (Incidentally, geodesics for a degenerate Riemannian metric appear
in the solutions of the vector-valued Modica-Mortola problem as well \cite{Baldo:90}.)
There are, however, several complications. First, we have \emph{$\R^2$-valued} $1$-currents, so
we should really think of a \emph{pair} of curves linked through $\M_F$. Second, the function
$F$ is degenerate in some sense in both variables. Third, although $T$ should be thought of as a
one-dimensional object, it does not follow that it is supported on a one-dimensional set (and in
general it is not; see Example \ref{exm:cross-tie} below). Because of all of this, the standard
methods from geometric analysis do not apply here.

We do not have any general methods to solve the problem, but we can nevertheless give some
estimates, which show that $T^0$, as defined in \eqref{eq:line-segment}, is a minimiser under certain conditions. One of the key tools
we use for this purpose, is a result and Bonicatto and Gusev \cite{Bonicatto-Gusev:22}
(see also the work of Smirnov \cite{Smirnov:93} and of Baratchart, Hardin, and Villalobos-Guill\'en
\cite{Baratchart-Hardin-Villalobos-Guillen:21}), which gives a decomposition of a normal $1$-current
into actual curves. This result applies to conventional $1$-currents, not $\R^2$-valued ones,
but at least we can apply it to the first component of $T \in \ncrt_{2 \times 2}^0$.
We can then give some estimates relying on convexity and the structure of $F$ to also take the
second component into account. This gives rise to a functional for Lipschitz curves,
which is now really similar to an anisotropic version of the length functional and, in
principle, can be analysed with standard methods involving ordinary differential equations.
Unfortunately, it also involves some unknown functions, and therefore, the task is not so
simple after all. Notwithstanding, with some further estimates, we finally prove Corollary \ref{cor:PDE}
as a result.

We conclude the paper with some examples in Section \ref{sct:examples}. First, we discuss some potential
functions $W$ such that Corollary \ref{cor:PDE} applies, and the optimal transition layers therefore
have one-dimensional profiles. This includes the well-known Aviles-Giga functionals, but also includes some new examples. In particular, we have
partial results for
the potential $W(y) = (1 - |y|^2)^{2\beta}$ for $\beta \in (0, 1)$ (see Corollary \ref{cor:examples}), which
is still badly understood despite the attention it has received in the literature
\cite{Aviles-Giga:99, Ambrosio-DeLellis-Mantegazza:99}.

Finally, we consider the question what Theorem \ref{thm:main} can tell us
in situations where the equality from Question \ref{qst:1D} does \emph{not} hold true (i.e., when the optimal transition profiles are not one-dimensional
and form microstructures instead).
We have no general results here, but we can compare the number $\M_F(T)$ for some specific
currents with the energy density for certain known microstructures.
If $T$ minimises $\M_F$, then the former gives a bound for $\mathcal{E}(a^+, a^-)$ from below by Theorem \ref{thm:main},
while the latter gives a
bound from above by definition. If the two bounds match, then
this proves that the microstructures are asymptotically minimising.
We can achieve this for two different examples (assuming that the potential
function $W$ is such that the corresponding $T$ is indeed a minimiser of $\M_F(T)$): a diamond pattern introduced by Jin and Kohn \cite{Jin-Kohn:00}
and the cross-tie wall from micromagnetics \cite{DeSimone-Kohn-Mueller-Otto:03}.
This raises the question whether the estimate from Theorem \ref{thm:main} might be sharp
in general. We have no evidence for this, however, beyond these two examples.

\subsection{Notation}

The following notation is used throughout the paper, with the exception of Section \ref{sct:L-infinity},
where some adjustments are required due to a more general setting.

We write $M^\transpose$ for the transpose of a matrix $M$ and
$I$ for the identity $(2 \times 2)$-matrix.
As mentioned previously, for $M, N \in \R^{2 \times 2}$, we use the notation $M : N = \tr(M^\transpose N)$ and $|M| = \sqrt{M : M}$
for the Frobenius inner product and norm, respectively. (In Section \ref{sct:L-infinity}, we will
also use the corresponding notation for $(m \times n)$-matrices.)

Given two vector spaces $X$ and $Y$, the space of linear maps $X \to Y$ is denoted by $\mathcal{L}(X, Y)$.

Although our problem is concerned with vector fields $u \colon \Omega \to \R^2$, much of our
analysis will take place entirely in the codomain $\R^2$. We generally use the notation $x$ for
a generic point in the domain $\Omega$, and $y$ for a generic point in the codomain $\R^2$.
(Section \ref{sct:L-infinity} is an exception here, too, as it is about an auxiliary problem independent
of $u$.)

We will frequently work with convolutions with a standard mollifier. Therefore,
we fix $\rho \in C_0^\infty(B_1(0))$ with $\rho \ge 0$ and $\int_{B_1(0)} \rho(y) \, dy = 1$.
For $\delta > 0$, we set $\rho_\delta(y) = \delta^{-2} \rho(y/\delta)$.

We further use the notation $\N_0 = \{0, 1, 2, \dotsc\}$.

\section{Characterising calibrations through differential inequalities}
\label{sct:differential-inequalities}

In this section, we derive some conditions in the form of certain
differential inequalities related to the inequality
\begin{equation} \label{eq:calibration}
\div \Phi(u) + \alpha(u) \div u \le \frac{\epsilon}{2} |D u|^2 + \frac{1}{2\epsilon} W(u) + \epsilon \div(a(u) Du)
\end{equation}
that characterises calibrations.
These conditions will make it easier to study suitable
calibrations later on. Some of the following arguments go back to
the work of Ignat and Merlet \cite{Ignat-Merlet:12}, but we extend the theory significantly.

\subsection{Pointwise conditions}

For a given map $\Phi \colon \R^2 \to \R^2$ and a function $\alpha \colon \R^2 \to \R$,
we want to understand the above inequality \eqref{eq:calibration}.
First we show that it suffices to consider tensor fields $a$
of a specific form.

\begin{proposition}
Suppose that $\Phi \in C^1(\R^2; \R^2)$ and $\alpha \in C^0(\R^2)$.
Let $\epsilon > 0$. If $a \colon \R^2 \to \mathcal{L}(\R^{2 \times 2}; \R^2)$ is continuously differentiable and
satisfies \eqref{eq:calibration} for all $u \in C^2(B_1(0); \R^2)$,
then there exists a vector field $\omega \in C^1(\R^2; \R^2)$ such that
\[
a(y)M = -(M^\transpose \omega(y))^\perp
\]
for any $y \in \R^2$ and
\[
\div(a(u) Du) = (\curl \omega)(u) \det Du
\]
for any $u \in C^2(B_1(0); \R^2)$.
\end{proposition}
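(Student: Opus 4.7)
The key observation is that the inequality \eqref{eq:calibration} must hold pointwise for every $u \in C^2(B_1(0);\R^2)$, and the only term on either side that involves the second derivatives $D^2u$ is $\epsilon\div(a(u)Du)$, which depends \emph{linearly} on $D^2u$. At any point $x_0 \in B_1(0)$ I can prescribe the jet $\bigl(u(x_0),Du(x_0),D^2u(x_0)\bigr)$ arbitrarily using a quadratic polynomial, so if the coefficient of any second partial derivative $\partial_k\partial_i u_j$ in $\div(a(u)Du)$ were nonzero, I could scale it to drive the right-hand side of \eqref{eq:calibration} arbitrarily negative while keeping the left-hand side fixed, contradicting the inequality. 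Writing $(a(y)M)_k=\sum_{i,j}a^k_{ij}(y)M_{ij}$ and isolating the second-order contribution $\sum_{i,j,k}a^k_{ij}(u)\,\partial_k\partial_i u_j$, the symmetry of the Hessian therefore forces the antisymmetry $a^k_{ij}(y)+a^i_{kj}(y)=0$ for all $i,j,k\in\{1,2\}$ and all $y\in\R^2$ (modulo the convention choice for $Du$).

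This algebraic constraint is the whole game. The diagonal case $k=i$ kills four of the eight components of $a(y)$ outright, and the off-diagonal case couples the remaining four in two pairs, leaving exactly two scalar degrees of freedom at each $y$. I collect these into a vector $\omega(y)\in\R^2$ and verify entry by entry that the surviving tensors are precisely those of the form $a(y)M=-(M^T\omega(y))^\perp$. The regularity $\omega\in C^1(\R^2;\R^2)$ is inherited directly from $a\in C^1$, since the components of $\omega$ are just two specific entries of $a$.

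The identity for $\div(a(u)Du)$ is then a direct computation. By construction, all second-order terms in $u$ have been engineered to cancel, so only first-order (quadratic-in-$Du$) contributions remain. Substituting the explicit form of $a$ and applying the chain rule, the pieces containing $\partial_{y_1}\omega_1$ and $\partial_{y_2}\omega_2$ drop out, while the cross-pieces reorganise into
\[
\bigl(\partial_{y_1}\omega_2-\partial_{y_2}\omega_1\bigr)(u)\,\bigl(\partial_1u_1\,\partial_2u_2-\partial_2u_1\,\partial_1u_2\bigr)=(\curl\omega)(u)\,\det Du.
\]

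The main obstacle is not conceptual but notational. The argument rests on correctly identifying in Step 1 the \emph{symmetric} part of the principal symbol (with respect to the index pair that matches the symmetric Hessian), since only that part is forced to vanish against an arbitrary $D^2u_j$, and on the index-heavy verification in Step 3 that every surviving first-order cross-term which is not part of the Jacobian cancels against a matching counterpart. No further test-function trickery or PDE machinery is needed beyond the quadratic-polynomial scaling argument in Step 1.
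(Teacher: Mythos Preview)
Your proposal is correct and follows essentially the same route as the paper: prescribe the jet of $u$ at a point by a quadratic polynomial, scale the Hessian to force the linear-in-$D^2u$ part of $\div(a(u)Du)$ to vanish against all symmetric matrices, read off the antisymmetry relations on the coefficients $a^k_{ij}$, extract the two surviving degrees of freedom as $\omega$, and finish with a direct computation. The paper simplifies the test slightly by taking $Du(0)=0$ so that only $\tfrac{1}{2\epsilon}W(y)$ and the second-order term survive at the chosen point, but this is a cosmetic difference.
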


\begin{proof}
Let $a_{jk}^i \in C^1(\R^2)$, for $i, j, k = 1, 2$, denote the coefficients of $a$, so that
\[
a(y)M = \sum_{j, k = 1}^2 m_{jk} \begin{pmatrix} a_{jk}^1(y) \\ a_{jk}^2(y) \end{pmatrix}
\]
for all $M = (\begin{smallmatrix} m_{11} & m_{12} \\ m_{21} & m_{22} \end{smallmatrix}) \in \R^{2 \times 2}$
and all $y \in \R^2$.

Given an arbitrary point $y \in \R^2$ and two symmetric matrices
\[
\Lambda^1 = \begin{pmatrix} \lambda_{11}^1 & \lambda_{12}^1 \\ \lambda_{21}^1 & \lambda_{22}^1 \end{pmatrix} \quad \text{and} \quad \Lambda^2 = \begin{pmatrix} \lambda_{11}^2 & \lambda_{12}^2 \\ \lambda_{21}^2 & \lambda_{22}^2 \end{pmatrix},
\]
we can find $u = (\begin{smallmatrix} u_1 \\ u_2 \end{smallmatrix}) \in C^2(B_1(0); \R^2)$ such that $u(0) = y$ and
$Du(0) = 0$, while at the same time, $D^2u_k(0) = \Lambda^k$
for $k = 1, 2$. Then
\[
\div(a(u)Du)(0) = \sum_{i, j, k = 1}^2 a_{ij}^k(y) \lambda_{jk}^i.
\]
Inequality \eqref{eq:calibration}, evaluated at $0$, thus gives
\[
0 \le \frac{1}{2\epsilon} W(y) + \epsilon \sum_{i, j, k = 1}^2 a_{ij}^k(y) \lambda_{jk}^i.
\]
Since this also holds true for all real multiples of $\Lambda^1$ and
$\Lambda^2$, it follows in fact that
\[
\sum_{i, j, k = 1}^2 a_{ij}^k(y) \lambda_{jk}^i = 0
\]
for any pair of symmetric matrices. Therefore, the coefficients
$a_{11}^1$, $a_{12}^2$, $a_{21}^1$, and $a_{22}^2$ must vanish, and
\[
a_{12}^1 + a_{11}^2 = 0 \quad \text{and} \quad a_{22}^1 + a_{21}^2 = 0.
\]
Set
\[
\omega = \begin{pmatrix} a_{12}^1 \\ a_{22}^1 \end{pmatrix},
\]
then the desired formulas follow by a direct calculation.
\end{proof}

We have the following characterisation of \eqref{eq:calibration}.

\begin{proposition} \label{prp:pointwise-characterisation}
Suppose that $\Phi \in C^1(\R^2; \R^2)$ and $\alpha \in C^0(\R^2)$.
Set $\Xi = D\Phi + \alpha I$.
Let $\omega \in C^1(\R^2; \R^2)$ and $\sigma = \curl \omega$. Suppose that
$\epsilon > 0$. Then the following statements are equivalent.
\begin{itemize}
\item[(A)]
The inequality
\begin{equation} \label{eq:calibration-omega}
\div \Phi(u) + \alpha(u) \div u  \le \frac{\epsilon}{2} |Du|^2 + \frac{1}{2\epsilon} W(u) - \epsilon \div\bigl((Du)^\transpose \omega(u)\bigr)^\perp
\end{equation}
is satisfied for all $u \in C^2(B_1(0); \R^2)$.

\item[(B)]
The inequalities $|Du|^2 + 2 \sigma(u) \det Du \ge 0$ and
\[
\div \Phi(u) + \alpha(u) \div u \le \Bigl(W(u) \bigl(|Du|^2 + 2 \sigma(u) \det Du\bigr)\Bigr)^{1/2}
\]
are satisfied for all $u \in C^2(B_1(0); \R^2)$.

\item[(C)]
For all $y \in \R^2$ and all $M \in \R^{2 \times 2}$,
\[
(M : \Xi(y))^2 \le W(y) \bigl(|M|^2 + 2 \sigma(y) \det M\bigr)
\]
and $|\sigma(y)| \le 1$.
\end{itemize}
\end{proposition}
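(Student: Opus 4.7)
The plan is to establish $(\text{A}) \Leftrightarrow (\text{C})$ and $(\text{B}) \Leftrightarrow (\text{C})$, with $(\text{C})$ as the pivot. The key preliminary step is to rewrite the term involving $\omega$ in (A) as a pointwise expression in $u$ and $Du$. Combining $a(y)M = -(M^T\omega(y))^\perp$ with the identity $\div(a(u)Du) = \sigma(u) \det Du$ from the previous proposition, I obtain
\[
-\div\bigl((Du)^T\omega(u)\bigr)^\perp = \sigma(u) \det Du,
\]
so the last term in (A) is simply $\epsilon\sigma(u)\det Du$, which depends only on $u$ and $Du$. Since $\div\Phi(u) + \alpha(u)\div u = \Xi(u) : Du$ as well, the inequality (A) becomes
\[
\Xi(u) : Du \le \frac{\epsilon}{2} |Du|^2 + \frac{1}{2\epsilon} W(u) + \epsilon \sigma(u) \det Du.
\]
Because both sides depend only on $u$ and $Du$ pointwise, and because any prescribed values of $(u(x_0), Du(x_0)) = (y, M) \in \R^2 \times \R^{2\times 2}$ can be realised by some $u \in C^2(B_1(0); \R^2)$, condition (A) is equivalent to the pointwise inequality
\[
M : \Xi(y) \le \frac{\epsilon}{2}\bigl(|M|^2 + 2\sigma(y)\det M\bigr) + \frac{1}{2\epsilon} W(y) \quad \text{for all $y \in \R^2$, $M \in \R^{2\times 2}$}. \qquad (\star)
\]

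Next I would prove $(\star) \Leftrightarrow (\text{C})$ by a scaling argument. For the forward direction, replace $M$ by $\lambda M$: the left-hand side is linear in $\lambda$ while the right-hand side has the coefficient of $\lambda^2$ equal to $\tfrac{\epsilon}{2}(|M|^2 + 2\sigma(y)\det M)$. Unless this coefficient is non-negative, the right-hand side tends to $-\infty$ as $|\lambda|\to\infty$, contradicting the linear growth of the left-hand side. Testing on $M = \tfrac{1}{\sqrt 2}\operatorname{diag}(1, \pm 1)$ (for which $|M|^2 = 1$ and $\det M = \pm \tfrac{1}{2}$) then forces $|\sigma(y)| \le 1$. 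When $|M|^2 + 2\sigma(y)\det M > 0$, optimising over $\lambda$ gives exactly $(M : \Xi(y))^2 \le W(y)(|M|^2 + 2\sigma(y)\det M)$, which also holds trivially if the quadratic form vanishes (one then also deduces $M : \Xi(y) = 0$). Conversely, given (C), Young's inequality $\sqrt{AB} \le \tfrac{\epsilon}{2} A + \tfrac{1}{2\epsilon} B$ applied to $A = |M|^2 + 2\sigma(y)\det M \ge 0$ and $B = W(y)$ recovers $(\star)$.

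Finally, $(\text{B}) \Leftrightarrow (\text{C})$. The implication $(\text{C}) \Rightarrow (\text{B})$ is immediate: evaluating (C) at $(y, M) = (u(x), Du(x))$ and taking square roots gives $|\Xi(u):Du| \le \sqrt{W(u)(|Du|^2 + 2\sigma(u)\det Du)}$, and the non-negativity of the radicand follows from $|\sigma|\le 1$ as above. For $(\text{B}) \Rightarrow (\text{C})$, fix $y$ and $M$ and choose $u \in C^2(B_1(0);\R^2)$ with $u(x_0) = y$ and $Du(x_0) = \pm M$; since $\det(\pm M) = \det M$ in two dimensions, both choices yield the same radicand, and combining the two pointwise inequalities gives $|M : \Xi(y)| \le \sqrt{W(y)(|M|^2 + 2\sigma(y)\det M)}$, hence the squared form in (C). The bound $|\sigma|\le 1$ is forced by the requirement $|M|^2 + 2\sigma(y)\det M \ge 0$ tested on the same diagonal matrices.

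The main obstacle I expect is purely bookkeeping: namely, verifying cleanly that the $\omega$-divergence on the right-hand side of (A) really reduces to the algebraic quantity $\sigma(u)\det Du$ with the correct sign, and that the transposition conventions make $\div \Phi(u) + \alpha(u) \div u$ equal to $\Xi(u):Du$ with $\Xi = D\Phi + \alpha I$. Beyond that, the equivalences are driven by a single elementary scaling/optimisation in $\lambda$, and the whole argument reduces to the observation that the quadratic $\lambda \mapsto a\lambda^2 - 2 b \lambda + c$ is non-negative for all $\lambda \in \R$ if and only if $a \ge 0$ and $b^2 \le ac$.
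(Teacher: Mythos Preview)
Your proof is correct and cleaner than the paper's. Both arguments begin by rewriting the $\omega$-term as $\epsilon\sigma(u)\det Du$ and reducing (A) to a pointwise inequality in $(y,M)$; the difference lies in how $(\text{A})\Rightarrow(\text{C})$ is extracted. The paper proceeds by choosing $M$ with a prescribed norm depending on $\epsilon$ and $W(y)$, then introduces an interpolation parameter $c\in[0,1)$ and a family of inner products to squeeze out first $|\sigma|\le 1$ and then the squared inequality as $c\nearrow 1$. You instead observe that $(\star)$ with $M\mapsto\lambda M$ is a quadratic inequality in $\lambda$, so the whole content is the sign of the leading coefficient together with the discriminant condition $b^2\le 4ac$; this dispatches both parts of (C) in one stroke and avoids the case split on $W(y)=0$ and the auxiliary parameter entirely. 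One cosmetic point: with the paper's conventions $\div\Phi(u)+\alpha(u)\div u = \Xi(u):(Du)^T$ rather than $\Xi(u):Du$, but since $M\mapsto M^T$ preserves $|M|^2$ and $\det M$ this does not affect your argument (and you flagged the transposition issue yourself).
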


\begin{proof}
If we assume that (B) holds true, then (A) follows from the observation that
\[
-\div\bigl((Du)^\transpose \omega(u)\bigr)^\perp = \sigma(u) \det Du
\]
and Young's inequality.

Now suppose that (A) holds true. We want to show that (C) follows.
We note that
\[
\Xi(u) : (Du)^\transpose = \div \Phi(u) + \alpha(u) \div u \le  \frac{\epsilon}{2} |Du|^2 + \frac{1}{2\epsilon} W(u) + \epsilon \sigma(u) \det Du
\]
for any $u \in C^2(B_1(0); \R^2)$. Consider an arbitrary point $y \in \R^2$.
If $W(y) = 0$, then we choose an arbitrary matrix $M \in \R^{2 \times 2}$ and
consider $u \in C^2(B_1(0); \R^2)$ such that $u(0) = y$ and $Du(0) = M^\transpose$. Then
we conclude that
\begin{equation} \label{eq:quadratic-form0}
\Xi(y) : M \le \frac{\epsilon}{2} |M|^2 + \epsilon \sigma(y) \det M
\end{equation}
for any $M \in \R^{2 \times 2}$. Since the left-hand side is linear in $M$ and the
right-hand side is quadratic, this can only hold true when $\Xi(y) = 0$. In this case,
the first inequality in (C) is clear, and the second one follows from the fact that
the right-hand side of \eqref{eq:quadratic-form0} must be positive semi-definite in $M$.

Now suppose that $W(y) \neq 0$. Choose a matrix $M \in \R^{2 \times 2}$ such that
\[
|M|^2 = \frac{W(y)}{\epsilon^2}.
\]
We can again choose $u$ such that $u(0) = y$ and $Du(0) = M^\transpose$. Thus
\[
\Xi(y) : M \le  \frac{\epsilon}{2} |M|^2 + \frac{1}{2\epsilon} W(y) + \epsilon \sigma(y) \det M = \sqrt{W(y)} \left(|M| + \frac{\sigma(y) \det M}{|M|}\right).
\]
Since the left-hand side and the right-hand side are both positive homogeneous
in $M$ of degree $1$, it follows that in fact,
\[
\Xi(y) : M \le \sqrt{W(y)} \left(|M| + \frac{\sigma(y) \det M}{|M|}\right)
\]
for \emph{all} $M \in \R^{2 \times 2} \setminus \{0\}$. If $\sigma(y) = 0$, then the desired inequalities hold at $y$.

If $\sigma(y) \neq 0$, then we fix a number $c \in [0, 1)$ such that
$c |\sigma(y)| < 1$ and consider $M \in \R^{2 \times 2}$ such that
\[
|M|^2 + 2c \sigma(y) \det M = \frac{W(y)}{\epsilon^2}.
\]
In this case, we obtain the inequality
\[
\begin{split}
\Xi(y) : M & \le \frac{\epsilon}{2} |M|^2 + \frac{1}{2\epsilon} W(y) + \epsilon \sigma(y) \det M \\
& = \frac{\epsilon}{2} \bigl(|M|^2 + 2c \sigma(y) \det M\bigr) + \frac{1}{2\epsilon} W(y) + \epsilon (1 - c)\sigma(y) \det M \\
& = \sqrt{W(y)} \left(\sqrt{|M|^2 + 2c \sigma(y) \det M} + \frac{(1 - c) \sigma(y) \det M}{\sqrt{|M|^2 + 2c \sigma(y) \det M}}\right).
\end{split}
\]
Again we conclude that
\begin{equation} \label{eq:interpolation}
\Xi(y) : M \le \sqrt{W(y)} \left(\sqrt{|M|^2 + 2c \sigma(y) \det M} + \frac{(1 - c) \sigma(y) \det M}{\sqrt{|M|^2 + 2c \sigma(y) \det M}}\right)
\end{equation}
for all $M \in \R^{2 \times 2} \setminus \{0\}$. If we replace $M$ by $-M$, then the left-hand
side changes its sign while the right-hand side stays the same. Therefore,
the inequality
\[
\sqrt{|M|^2 + 2c \sigma(y) \det M} + \frac{(1 - c) \sigma(y) \det M}{\sqrt{|M|^2 + 2c \sigma(y) \det M}} \ge 0
\]
must be satisfied for all $M \in \R^{2 \times 2}$, which implies that
\[
|M|^2 + (1 + c) \sigma(y) \det M \ge 0.
\]
We conclude that
\[
(1 + c) |\sigma(y)| \le 2.
\]
Since we have proved this inequality for any $c$ such that
\[
0 \le c < \min\left\{1, \frac{1}{|\sigma(y)|}\right\},
\]
it follows that $|\sigma(y)| \le 1$.

It now follows that \eqref{eq:interpolation} is satisfied for any
$c \in [0, 1)$. Letting $c \nearrow 1$, we derive the other inequality in
(C) as well.

Now suppose that (C) is satisfied. Since
\[
\div \Phi(u) + \alpha(u) = (Du)^\transpose : \Xi(u),
\]
statement (B) follows immediately.
\end{proof}

Next we examine inequalities as in statement (C) above. For this purpose, we
require the function $g \colon \R^{2 \times 2} \to \R$ defined by
\[
g(M) = \frac{1}{2} \left(|M|^2 + \sqrt{|M|^4 - 4(\det M)^2}\right).
\]
We note that $g$ is convex, which is most easily seen in different coordinates:
let
\begin{align*}
q_1 & = \frac{1}{\sqrt{2}} (m_{11} + m_{22}), & q_2 & = \frac{1}{\sqrt{2}} (m_{11} - m_{22}), \\
q_3 & = \frac{1}{\sqrt{2}} (m_{12} + m_{21}), & q_4 & = \frac{1}{\sqrt{2}} (m_{12} - m_{21}).
\end{align*}
Then
\[
g(M) = \frac{1}{2} \left(\sqrt{q_1^2 + q_4^2} + \sqrt{q_2^2 + q_3^2}\right)^2,
\]
which is clearly convex. Since $g$ is also homogeneous of degree $2$,
it follows that for all $M, N \in \R^{2 \times 2}$ and for
$s, t \in (1, \infty)$ with $\frac{1}{s} + \frac{1}{t} = 1$,
\begin{equation} \label{eq:interpolation-g}
g(M + N) = g\left(\frac{sM}{s} +  \frac{tN}{t}\right) \le \frac{g(sM)}{s} + \frac{g(tN)}{t} = sg(M) + tg(N).
\end{equation}

\begin{lemma} \label{lem:minimising-over-sigma}
Suppose that $\Lambda \in \R^{2 \times 2} \setminus \{0\}$. Let
\[
s_0 = \frac{\det \Lambda}{g(\Lambda)}.
\]
\begin{enumerate}
\item
The inequality $g(\Lambda) \le 1$ holds true if, and only if,
there exists $s \in [-1, 1]$ such that
\begin{equation} \label{eq:quadratic-form}
(\Lambda : M)^2 \le |M|^2 + 2s \det M
\end{equation}
for all $M \in \R^{2 \times 2}$.
\item If there is any $s \in [-1, 1]$ such that \eqref{eq:quadratic-form}
is satisfied for all $M \in \R^{2 \times 2}$, then the same holds true for
$s = s_0$.
\end{enumerate}
\end{lemma}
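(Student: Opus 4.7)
The plan is to exploit the orthogonal change of coordinates $(q_1,q_2,q_3,q_4)$ introduced immediately before the lemma, in which both of the quadratic forms appearing in \eqref{eq:quadratic-form} diagonalize simultaneously. A direct computation shows that
\[
|M|^2 + 2s \det M = (1+s)(q_1^2 + q_4^2) + (1-s)(q_2^2 + q_3^2),
\]
which is positive semidefinite in $M$ precisely when $|s|\le 1$. Writing the coordinates of $\Lambda$ as $(\ell_1,\ell_2,\ell_3,\ell_4)$ and setting $A = \ell_1^2 + \ell_4^2$, $B = \ell_2^2 + \ell_3^2$, one has $|\Lambda|^2 = A+B$, $\det\Lambda = (A-B)/2$, and, by the identity stated just before the lemma, $g(\Lambda) = (\sqrt{A}+\sqrt{B})^2/2$. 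Moreover $\Lambda : M = \ell_1 q_1 + \ell_2 q_2 + \ell_3 q_3 + \ell_4 q_4$ in the new variables.

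For $s \in (-1,1)$ the right-hand side of \eqref{eq:quadratic-form} is positive definite with diagonal matrix $D_s = \mathrm{diag}(1+s,1-s,1-s,1+s)$, so the inequality $(\Lambda:M)^2 \le |M|^2 + 2s\det M$ holds for every $M$ if and only if the dual norm bound $\ell^T D_s^{-1} \ell \le 1$ is satisfied, that is,
\[
h(s) := \frac{A}{1+s} + \frac{B}{1-s} \le 1.
\]
The function $h$ is strictly convex on $(-1,1)$; solving $h'(s)=0$ yields the unique minimizer $s^* = (\sqrt{A}-\sqrt{B})/(\sqrt{A}+\sqrt{B})$ with minimum value $(\sqrt{A}+\sqrt{B})^2/2 = g(\Lambda)$. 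A short calculation identifies $s^* = (A-B)/(\sqrt{A}+\sqrt{B})^2 = \det\Lambda/g(\Lambda) = s_0$, and $|s_0| \le 1$ follows from $|A-B| \le (\sqrt{A}+\sqrt{B})^2$.

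Both claims of the lemma then follow. For (i): the existence of some $s \in (-1,1)$ making \eqref{eq:quadratic-form} valid for all $M$ is equivalent to $\min_s h(s) \le 1$, i.e.\ to $g(\Lambda) \le 1$. For (ii): the argument above shows that $s_0$ is the optimal choice, so whenever some $s$ works then $s_0$ works as well. The one point that needs extra attention is the boundary, where $D_s$ is only positive semidefinite: for example if $s=1$ satisfies \eqref{eq:quadratic-form} for all $M$, then testing on matrices with $q_1=q_4=0$ forces $\ell_2=\ell_3=0$, hence $B=0$, $s_0=1$, and \eqref{eq:quadratic-form} reduces to the scalar inequality $A \le 2$, which is again $g(\Lambda) \le 1$; the case $s=-1$ is symmetric. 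I do not foresee any substantial obstacle: once the diagonalizing coordinates are in place, the rest is one-variable calculus and a quick algebraic identification of $s^*$ with $s_0$; the only mildly delicate point is checking that $s_0$ remains admissible at the endpoints $\pm 1$.
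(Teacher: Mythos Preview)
Your proof is correct and follows essentially the same strategy as the paper's: reduce \eqref{eq:quadratic-form} to a one-variable condition on $s$, minimise over $s$, and identify the minimum value with $g(\Lambda)$. The function you call $h(s)=A/(1+s)+B/(1-s)$ is literally the paper's $\phi(s)=(|\Lambda|^2-2s\det\Lambda)/(1-s^2)$ rewritten via $|\Lambda|^2=A+B$ and $2\det\Lambda=A-B$.

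The main difference is in how this function is obtained. The paper works in the original matrix coordinates, introduces the inner product $\langle M,N\rangle_s$, invokes Riesz representation to produce a matrix $\Theta_s$, solves a $4\times 4$ linear system for its entries, and then computes $|\Theta_s|_s^2=\phi(s)$. You instead pass at the outset to the orthogonal coordinates $(q_1,q_2,q_3,q_4)$ in which $|M|^2+2s\det M$ is diagonal, so the Cauchy--Schwarz/dual-norm condition $\ell^TD_s^{-1}\ell\le 1$ drops out immediately and $h$ appears in the separated form $A/(1+s)+B/(1-s)$. This makes both the minimisation and the identification with $g(\Lambda)=\tfrac12(\sqrt{A}+\sqrt{B})^2$ one-line computations, and the identity $s^*=s_0$ is transparent. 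The paper's case split according to whether $2\det\Lambda=\pm|\Lambda|^2$ corresponds exactly to your endpoint cases $B=0$ or $A=0$ (equivalently $s_0=\pm1$), which you handle correctly; just be aware that when, say, $B=0$ the infimum of $h$ on $(-1,1)$ is $A/2=g(\Lambda)$ but is not attained, so the direct Cauchy--Schwarz check at $s=1$ that you sketch is genuinely needed to close both directions of (i) and to confirm (ii) in that case.
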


\begin{proof}
We first consider a matrix $\Lambda$ such that $2\det \Lambda = |\Lambda|^2$.
Then $\Lambda = (\begin{smallmatrix} a & b \\ -b & a \end{smallmatrix})$
for some $a, b \in \R$. We then calculate $g(\Lambda) = \frac{1}{2} |\Lambda|^2$
and $s_0 = 1$.
If \eqref{eq:quadratic-form} is satisfied for some $s \in [-1, 1]$,
then inserting $M = \Lambda$ yields
$|\Lambda|^2 \le 2$, i.e., $g(\Lambda) \le 1$. Conversely, if $g(\Lambda) \le 1$,
then $a^2 + b^2 \le 1$. Hence
\[
\begin{split}
(\Lambda : M)^2 & = \bigl(a(m_{11} + m_{22}) + b(m_{12} - m_{21})\bigr)^2 \\
& \le (m_{11} + m_{22})^2 + (m_{12} - m_{21})^2 \\
& = |M|^2 + 2\det M
\end{split}
\]
by the Cauchy-Schwarz inequality. Both statements of the lemma follow immediately.

If $2\det \Lambda = -|\Lambda|^2$, then we can use practically the same arguments, except that a few signs will change in the above calculations.

We now assume that $2|\det \Lambda| < |\Lambda|^2$.
In this case, we first note that \eqref{eq:quadratic-form} cannot be
satisfied for $s = \pm 1$. Indeed, if it did hold true for $s = 1$, then we
could test it with the matrices $(\begin{smallmatrix} 1 & 0 \\ 0 & -1 \end{smallmatrix})$ and $(\begin{smallmatrix} 0 & 1 \\ 1 & 0 \end{smallmatrix})$
to find that $\Lambda = (\begin{smallmatrix} a & b \\ -b & a \end{smallmatrix})$
for some $a, b \in \R$. That is, we would find that we are in fact in the first
case. For $s = -1$, the arguments are similar.

We therefore consider $s \in (-1, 1)$ now. Define the bilinear form
\[
\scp{M}{N}_s = M : N + s(m_{11} n_{22} - m_{12} n_{21} - m_{21} n_{12} + m_{22} n_{11}).
\]
It is easy to see that this constitutes an inner product on $\R^{2 \times 2}$.
We also note that
\[
|M|_s^2 \coloneqq \scp{M}{M}_s = |M|^2 + 2s \det M.
\]
By the Riesz representation theorem, there exists
$\Theta_s \in \R^{2 \times 2}$ such that
\[
\Lambda : M = \scp{\Theta_s}{M}_s
\]
for all $M \in \R^{2 \times 2}$. By the Cauchy-Schwarz inequality,
inequality \eqref{eq:quadratic-form} is satisfied if, and only if,
\[
|\Theta_s|_s^2 \le 1.
\]

We can easily determine $\Theta_s = (\begin{smallmatrix} \theta_{11} & \theta_{12} \\ \theta_{21} & \theta_{22} \end{smallmatrix})$ by solving a linear system of equations.
We obtain
\begin{align*}
\theta_{11} & = \frac{\lambda_{11} - s\lambda_{22}}{1 - s^2}, &
\theta_{12} & = \frac{\lambda_{12} + s\lambda_{21}}{1 - s^2}, \\
\theta_{21} & = \frac{\lambda_{21} + s\lambda_{12}}{1 - s^2}, &
\theta_{22} & = \frac{\lambda_{22} - s\lambda_{11}}{1 - s^2}. 
\end{align*}
Therefore,
\[
|\Theta_s|_s^2 = \Lambda : \Theta_s = \frac{|\Lambda|^2 - 2 s \det \Lambda}{1 - s^2}.
\]
Thus we see that for $-1 < s < 1$, inequality
\eqref{eq:quadratic-form} is satisfied if, and only if,
\[
\frac{|\Lambda|^2 - 2 s \det \Lambda}{1 - s^2} \le 1.
\]

We now define
\[
\phi(s) = \frac{|\Lambda|^2 - 2 s \det \Lambda}{1 - s^2}, \quad -1 < s < 1,
\]
and minimise this function over $(-1, 1)$. Differentiating, we compute
\[
\phi'(s) = - \frac{2s^2 \det \Lambda - 2s|\Lambda|^2 + 2\det \Lambda}{(1 - s^2)^2}.
\]
The derivative has a unique zero in $(-1, 1)$, which is at
\begin{equation} \label{eq:s0}
\frac{|\Lambda|^2 - \sqrt{|\Lambda|^4 - 4(\det \Lambda)^2}}{2 \det \Lambda} = s_0
\end{equation}
(unless $\det \Lambda = 0$, in which case the left-hand side is meaningless
but the unique zero is still at $s_0$).
Moreover, we know that $\phi(s) \to \infty$ as $s \nearrow 1$ or $s \searrow -1$.
It follows that $\phi$ has a unique minimum, which is attained at $s_0$. We further compute
\[
\phi(s_0) = g(\Lambda).
\]
(This is easier to calculate with the expression for $s_0$ on the left-hand
side of \eqref{eq:s0} rather than in the definition of $s_0$.)

Hence if there is any $s \in [-1, 1]$
such that \eqref{eq:quadratic-form} holds true, then $\phi(s) \le 1$,
and it follows that $g(\Lambda) = \phi(s_0) \le 1$. The number $s_0$ then
also satisfies \eqref{eq:quadratic-form}. Conversely, if $g(\Lambda) \le 1$,
then we still conclude that \eqref{eq:quadratic-form} holds true for
$s = s_0$.
\end{proof}

\subsection{A regularity gap}

The combination of Proposition \ref{prp:pointwise-characterisation} and
Lemma \ref{lem:minimising-over-sigma} suggests that the functions $\Phi$
and $\alpha$ give rise to an inequality of the form \eqref{eq:calibration}
if $g(\Xi) \le W$. Assuming that $\Phi$ is the quantity we are interested
in primarily, we may also wish to minimise $g(D\Phi + \alpha I)$ over
$\alpha$, which will give $\alpha = -\frac{1}{2} \div \Phi$.
(Then $\Xi$ is the trace free part of $D\Phi$.)
If we define the function $f \colon \R^{2 \times 2} \to \R$ by
\[
\begin{split}
f(M) & = g\left(M - \frac{\tr M}{2} I\right) \\
& = \frac{1}{2} \left(|M|^2 - \frac{1}{2} (\tr M)^2 + |m_{12} - m_{21}| \sqrt{|M|^2 - 2\det M}\right),
\end{split}
\]
then we obtain the inequality $f(D\Phi) \le W$.
Lemma \ref{lem:minimising-over-sigma}
then also gives a good idea of how to choose $\sigma = \curl \omega$.

In the following sections, we will indeed construct $\Phi$ such that $f(D\Phi) \le W$.
Unfortunately, this function will not satisfy the
regularity requirements of the preceding subsection. For this reason, we have to use
a regularisation scheme, which eventually necessitates the construction of a family of
vector fields $\omega_\delta$ rather than a single $\omega$ as in Proposition \ref{prp:pointwise-characterisation}.

Another technical difficulty arises from the fact that once we have $\sigma$, we need to
invert the $\curl$ operator. It does not quite suffice to use standard results here,
because we do not necessarily have decay at infinity for $\sigma$, but we still want to
control the growth of $\omega$. We use the following result here.

\begin{lemma} \label{lem:inverting-curl}
There exists a constant $C$ such that the following holds true.
Suppose that $\sigma \in C^{0, 1/2}(\R^2)$ is bounded. Then there exists
$\omega \in C^1(\R^2; \R^2)$ such that $\curl \omega = \sigma$ and
\[
\sup_{y \in \R^2} \frac{|\omega(y)|}{1 + |y| \log |y|} \le C\sup_{y \in \R^2} |\sigma(y)|.
\]
\end{lemma}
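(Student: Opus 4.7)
The plan is to construct $\omega$ via a twice-subtracted Biot--Savart potential. Let $K(w) = \frac{1}{2\pi|w|^2} w^\perp$ denote the 2D Biot--Savart kernel; it satisfies $\curl K = \delta_0$ in the sense of distributions and is curl-free on $\R^2 \setminus \{0\}$. Since $\sigma$ is only bounded, the naive convolution $K \ast \sigma$ diverges, and subtracting only $K(-z)$ still leaves a logarithmically divergent tail of order $|y|/|z|^2$; so I also subtract the first-order Taylor correction in $y$. Fix a cutoff $\phi \in C^\infty(\R^2)$ with $\phi \equiv 0$ on $B_1(0)$ and $\phi \equiv 1$ on $\R^2 \setminus B_2(0)$, and define
\[
\omega(y) = \int_{\R^2} \bigl( K(y-z) - \phi(z) K(-z) - \phi(z) \, DK(-z) y \bigr) \sigma(z) \, dz.
\]

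I would then verify four things. (1) Absolute convergence: Taylor's formula yields $|K(y-z) - K(-z) - DK(-z) y| \lesssim |y|^2/|z|^3$ on $\{|z| \ge 2|y|\}$, integrable in $\R^2$; on $B_1(0)$ only $K(y-z)$ appears (with integrable $1/|y-z|$ singularity at $z = y$); on the intermediate annulus each term is separately integrable. (2) $\curl\omega = \sigma$: decompose $\omega = K \ast ((1-\phi)\sigma) + \omega_2$. The convolution has curl $(1-\phi)\sigma$ by the standard formula for compactly supported $C^{0,1/2}$ data. For $\omega_2$, the $y$-independent subtracted term contributes zero, the linear correction contributes $\phi(z)\bigl((DK)_{21}-(DK)_{12}\bigr)(-z) = \phi(z)\,(\curl K)(-z) = 0$ (using curl-freeness of $K$ off the origin and $\phi \equiv 0$ near $0$), and the remaining $\phi(z) K(y-z)$ piece gives $\phi(y)\sigma(y)$ through the distributional $\delta$ at $z = y$. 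Summing produces $\sigma$. (3) Growth estimate: for $|y| \ge 2$, split into $B_1(0)$, the annulus $A = B_{2|y|}(0) \setminus B_1(0)$, and the exterior. The $B_1$-integral is $O(1)$; the exterior contributes $\int_{|z|>2|y|} |y|^2/|z|^3 \, dz = O(|y|)$; the annulus yields the advertised logarithm through $\int_A |DK(-z)y| \, dz \lesssim |y| \int_1^{2|y|} dr/r = O(|y|\log|y|)$, the other annulus contributions being $O(|y|)$. The case $|y| < 2$ is handled by continuity of $\omega$. (4) $C^1$-regularity: since $(1-\phi)\sigma$ is compactly supported and in $C^{0,1/2}$, classical Schauder/Riesz-transform estimates give $K \ast ((1-\phi)\sigma) \in C^{1,1/2}(\R^2)$; and $\omega_2$ is $C^1$ in $y$ via differentiation under the integral sign (justified by the Taylor tail bound) combined with a local Schauder bound near the diagonal $z = y$.

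The main obstacle is step (2): the linear-in-$y$ correction is indispensable for making the tail integral converge, yet must not spoil $\curl\omega = \sigma$. This hinges on the algebraic identity $(DK)_{21}(w) = (DK)_{12}(w)$ for $w \neq 0$---equivalently $\curl K = 0$ off the origin---which combined with $\phi \equiv 0$ on $B_1(0)$ ensures the linear-in-$y$ term contributes nothing to the curl.
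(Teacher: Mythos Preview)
Your approach is correct and genuinely different from the paper's. The paper works at the level of the scalar potential: it decomposes $\sigma$ dyadically via a partition of unity $\{\eta_k\}$, sets $\tilde\phi_k = G * (\eta_k\sigma)$, subtracts the second-order Taylor polynomial of each $\tilde\phi_k$ at the origin to force convergence of $\phi = \sum_k \phi_k$ in $C^2$, and finally takes $\omega = \nabla^\perp\phi$. You instead renormalise the Biot--Savart kernel directly, subtracting its first-order Taylor expansion in $y$ inside a single integral; since $K = \nabla^\perp G$, your first-order subtraction in $K$ corresponds exactly to the paper's second-order subtraction in $G$, so the two schemes are morally the same ``Hadamard finite part'' idea, packaged differently. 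Your version is more compact (one integral rather than an infinite sum) and isolates the key algebraic fact---symmetry of $DK$ off the origin---very cleanly; the paper's dyadic version is more modular, reducing everything to compactly supported Schauder estimates at each scale, which makes the $C^2$ convergence of the series and the growth bookkeeping somewhat mechanical.

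One point to tighten in your step (2): as written you split $\omega_2$ into three pieces and take the curl of each, but the piece $\int \phi(z)K(y-z)\sigma(z)\,dz$ is divergent at infinity, so the termwise computation is formal. The fix is exactly the localisation you already invoke in step (4): for $y$ near a fixed $y_0$, write $\omega_2$ as $K*(\chi_R\phi\sigma)$ plus a constant, plus a linear map, plus a smooth tail supported on $\{|z|>R\}$ (where differentiation under the integral is justified and $\curl_y K(y-z)=0$ pointwise). Then the curl of the linear piece vanishes by your $(DK)_{12}=(DK)_{21}$ identity, and the convolution piece contributes $\chi_R(y_0)\phi(y_0)\sigma(y_0)=\phi(y_0)\sigma(y_0)$. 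This makes the argument rigorous without changing its content.
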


\begin{proof}
We use a (one-sided) dyadic decomposition of $\sigma$ in terms of a partition of unity
\[
1 = \sum_{k = 0}^\infty \eta_k,
\]
where $\eta_k \in C_0^\infty(\R^2)$ are functions such that $0 \le \eta_k \le 1$
for all $k \in \N_0$ and $\supp \eta_0 \subseteq B_2(0)$, while
$\supp \eta_k \subseteq B_{2^{k + 1}}(0) \setminus B_{2^{k - 1}}(0)$ for
$k \ge 1$.

Let $G$ denote the fundamental solution of the Laplace equation in $\R^2$.
For $k \in \N_0$, define $\tilde{\phi}_k = G * (\eta_k \sigma)$. Then
$\Delta \tilde{\phi}_k = \eta_k \sigma$. Standard Schauder estimates imply
that $\tilde{\phi}_k \in C^{2, 1/2}(\R^2)$. Furthermore, we know that $\tilde{\phi}_k$ is
smooth in $B_{2^{k - 1}}(0)$ for $k \ge 1$.

Set $S = \|\sigma\|_{L^\infty(\R^2)}$.
For any $y \in \R^2$, we have the estimate
\[
\begin{split}
|D\tilde{\phi}_k(y)| & = \frac{1}{2\pi} \left|\int_{\R^2} \frac{y - z}{|y - z|^2} \eta_k(z) \sigma_k(z) \, dz\right| \\
& \le \frac{S}{2\pi} \int_{B_{2^{k + 1}}(0)} \frac{dz}{|y - z|} \\
& \le \frac{S}{2\pi} \int_{B_{2^{k + 1}}(0)} \frac{dz}{|z|} = 2^{k + 1} S
\end{split}
\]
for all $k \in \N_0$. If $k \ge 1$, then we also estimate
\[
|D^2 \tilde{\phi}_k(0)| \le \frac{S}{2\pi} \int_{B_{2^{k + 1}}(0) \setminus B_{2^{k - 1}}(0)} \frac{dz}{|z|^2} = S \log 4.
\]
Let $R \ge 1$. If $k \in \N$ is such that $2^{k - 2} \ge R$, then with the
same arguments, we find a universal constant $C_1 \ge 0$ such that
\[
|D^3 \tilde{\phi}_k(y)| \le 2^{-k} C_1S
\]
for all $y \in B_R(0)$.

Now define $\phi_0 = \tilde{\phi}_0$ and
\[
\phi_k(y) = \tilde{\phi}_k(y) - \tilde{\phi}_k(0) - D\tilde{\phi}_k(0)y - \frac{1}{2} D^2\tilde{\phi}_k(0)(y, y), \quad k \ge 1.
\]
Then $\phi_k(0) = 0$, $D\phi_k(0) = 0$, and $D^2\phi_k(0) = 0$ for $k \ge 1$.
We still compute $\Delta \phi_k = \eta_k \sigma$ in $\R^2$. (For $k \ge 1$,
this is because $\Delta \tilde{\phi}_k(0) = 0$.) Moreover, if
$2^{k - 2} \ge R$, then
\[
|D^3 \phi_k(y)| \le 2^{-k} C_1S,
\]
which implies that there exists a universal constant $C_2$ such that
\[
R^{-3} |\phi_k| + R^{-2} |D\phi_k| + R^{-1} |D^2\phi_k| \le 2^{-k} C_2 S
\]
uniformly in $B_R(0)$ when $2^{k - 2} \ge R$. Therefore, the series
\[
\phi = \sum_{k = 0}^\infty \phi_k
\]
converges in $C^2(\overline{B_R(0)})$ for any $R \ge 1$. In particular,
the function $\phi$ is twice continuously differentiable.

Furthermore, for all $k \in \N_0$, we find that
\[
|D\phi_k| \le C_3(2^k + R) S
\]
in $B_R(0)$ for another universal constant $C_3$. If we choose $k_0$ such that
$2^{k_0 - 3} \le R < 2^{k_0 - 2}$, then
\[
\begin{split}
|D\phi(y)| & \le C_3 S \sum_{k = 0}^{k_0 - 1} (2^k + R) + C_2 SR^2 \sum_{k = k_0}^\infty 2^{-k} \\
& \le C_3 S (2^{k_0} + k_0 R) + 2^{1 - k_0} C_2 SR^2
\end{split}
\]
for all $y \in B_R(0)$. Thus we find a universal constant $C_4$ such that
\[
|D\phi (y)| \le C_4 S(1 + |y|\log|y|).
\]

Now we set
\[
\omega = \begin{pmatrix} -\dd{\phi}{y_2} \\ \dd{\phi}{y_1} \end{pmatrix},
\]
and then we have all the desired properties.
\end{proof}

We can now prove the following.

\begin{proposition} \label{prp:calibration}
There exists a constant $C \ge 0$ with the following property.
Let $\Phi \in \bigcap_{p < \infty} W_\loc^{1, p}(\R^2; \R^2)$ and set
$\alpha = - \frac{1}{2} \div \Phi$. Define
$\Phi_\delta = \rho_\delta * \Phi$, $\alpha_\delta = \rho_\delta * \alpha + \delta$, and
\[
W_\delta = \frac{\rho_\delta * W}{1 - \delta} + \delta.
\]
If $f(D\Phi) \le W$, then for every $\delta > 0$ there exists
$\omega_\delta \in C^1(\R^2; \R^2)$ such that
\begin{equation} \label{eq:calibration-approximation}
\div \Phi_\delta(u)  + \alpha_\delta(u) \div u \le \frac{\epsilon}{2} |Du|^2 + \frac{1}{2\epsilon} W_\delta(u) - \epsilon \div \bigl((Du)^\transpose \omega_\delta(u)\bigr)^\perp
\end{equation}
for every $u \in C^2(B_1(0); \R^2)$ and every $\epsilon > 0$. Furthermore,
\begin{equation} \label{eq:growth-omega}
|\omega_\delta(y)| \le C(1 + |y| \log |y|)
\end{equation}
for every $y \in \R^2$.
\end{proposition}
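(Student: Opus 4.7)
The plan is to invoke Proposition~\ref{prp:pointwise-characterisation} with the data $(\Phi_\delta,\alpha_\delta,W_\delta,\omega_\delta)$ in place of $(\Phi,\alpha,W,\omega)$, so that the target inequality \eqref{eq:calibration-approximation} becomes statement (A) of that proposition. By the equivalence (A)$\Leftrightarrow$(C), this reduces to exhibiting a suitable $\sigma_\delta = \curl \omega_\delta$ satisfying $|\sigma_\delta| \le 1$ and
\[
(\Xi_\delta(y) : M)^2 \le W_\delta(y) \bigl(|M|^2 + 2\sigma_\delta(y) \det M\bigr)
\]
for all $y \in \R^2$ and $M \in \R^{2 \times 2}$, where $\Xi_\delta = D\Phi_\delta + \alpha_\delta I$. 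A direct computation using $\alpha = -\frac{1}{2}\div\Phi$ and the linearity of mollification gives $\Xi_\delta = \rho_\delta * \Xi + \delta I$; since $\Xi$ is trace-free, $\tr \Xi_\delta \equiv 2\delta$.

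The analytic heart of the proof is the bound $g(\Xi_\delta) \le W_\delta$. I would compute $g(\delta I) = \delta^2$ directly and apply the Young-type inequality \eqref{eq:interpolation-g} with $s = 1/(1-\delta)$ and $t = 1/\delta$ to the splitting $\Xi_\delta = \rho_\delta * \Xi + \delta I$, obtaining
\[
g(\Xi_\delta) \le \frac{g(\rho_\delta * \Xi)}{1-\delta} + \delta.
\]
Convexity of $g$ together with Jensen's inequality then gives $g(\rho_\delta * \Xi) \le \rho_\delta * g(\Xi) \le \rho_\delta * W$, and combining these yields $g(\Xi_\delta) \le \rho_\delta * W/(1-\delta) + \delta = W_\delta$, exactly as desired.

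Next I use Lemma~\ref{lem:minimising-over-sigma} to produce $\sigma_\delta$ pointwise. Because $g(M) \ge \frac{1}{2}|M|^2 \ge \frac{1}{4}(\tr M)^2$ and $\tr \Xi_\delta = 2\delta$, I have the uniform positivity $g(\Xi_\delta) \ge \delta^2 > 0$, so the formula
\[
\sigma_\delta(y) = \frac{\det \Xi_\delta(y)}{g(\Xi_\delta(y))}
\]
is well-defined on all of $\R^2$. Applying part (ii) of Lemma~\ref{lem:minimising-over-sigma} to $\Lambda = \Xi_\delta(y)/\sqrt{W_\delta(y)}$, whose $g$-value is $\le 1$ by the previous step, I simultaneously obtain $|\sigma_\delta(y)| \le 1$ and the pointwise quadratic form inequality displayed above. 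This is exactly condition (C) of Proposition~\ref{prp:pointwise-characterisation} for the mollified data.

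The remaining and most delicate step is verifying that $\sigma_\delta$ has enough regularity for Lemma~\ref{lem:inverting-curl}, which requires $\sigma_\delta \in C^{0,1/2}(\R^2) \cap L^\infty(\R^2)$. Boundedness is automatic. For Hölder continuity, the representation $g(M) = \frac{1}{2}|M|^2 + \frac{1}{2}\sqrt{|M|^4 - 4(\det M)^2}$ shows that $g(\Xi_\delta)$ is the sum of a smooth function of $y$ and the square root of a smooth nonnegative function, hence locally $C^{0,1/2}$; combined with the lower bound $g(\Xi_\delta) \ge \delta^2$ and the smoothness of $\det \Xi_\delta$, this yields local Hölder regularity of $\sigma_\delta$. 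At distances $|y - y'| \ge 1$ the bound $|\sigma_\delta| \le 1$ trivially controls the Hölder quotient, so the main obstacle is establishing a uniform bound on the local Hölder constant at small scales; I would obtain this by exploiting the $0$-homogeneity of $\sigma_\delta$ in $\Xi_\delta$, so that $\sigma_\delta$ becomes the composition of a Lipschitz function on the compact unit sphere in $\R^{2 \times 2}$ with the normalised direction $\Xi_\delta/|\Xi_\delta|$, whose control relies on the uniform bound $|\Xi_\delta| \ge \sqrt{2}\delta$. Once $\sigma_\delta \in C^{0,1/2}(\R^2) \cap L^\infty(\R^2)$ is in place, Lemma~\ref{lem:inverting-curl} supplies $\omega_\delta \in C^1(\R^2; \R^2)$ with $\curl \omega_\delta = \sigma_\delta$ and the growth bound \eqref{eq:growth-omega}, and the implication (C)$\Rightarrow$(A) of Proposition~\ref{prp:pointwise-characterisation} delivers \eqref{eq:calibration-approximation}.
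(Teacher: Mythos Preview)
Your proposal is correct and follows essentially the same route as the paper: establish $g(\Xi_\delta) \le W_\delta$ via Jensen and \eqref{eq:interpolation-g}, define $\sigma_\delta = \det\Xi_\delta / g(\Xi_\delta)$ using the lower bound $g(\Xi_\delta) \ge \delta^2$, invoke Lemma~\ref{lem:minimising-over-sigma} to verify condition (C), then apply Lemma~\ref{lem:inverting-curl} and Proposition~\ref{prp:pointwise-characterisation}. One small remark: your effort to secure a \emph{uniform} local H\"older constant for $\sigma_\delta$ is unnecessary, since Lemma~\ref{lem:inverting-curl} only uses the regularity of $\sigma$ through the compactly supported pieces $\eta_k\sigma$, so local $C^{0,1/2}$ suffices; this is fortunate, because the map $\det/g$ on the unit sphere is only $C^{0,1/2}$ rather than Lipschitz, and $D\Xi_\delta$ need not be globally bounded under the stated hypotheses.
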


\begin{proof}
In addition to the above quantities, define
$\tilde{\alpha}_\delta = \rho_\delta * \alpha$ and
$\tilde{W}_\delta = \rho_\delta * W$. Furthermore, set $\Xi = D\Phi + \alpha I$ and
$\tilde{\Xi}_\delta = D\Phi_\delta + \tilde{\alpha}_\delta I$.
The inequality $f(D\Phi) \le W$ is equivalent to $g(\Xi) \le W$.
Then by Jensen's inequality, the convexity of $g$ implies that
\[
\begin{split}
g(\tilde{\Xi}_\delta(y)) & = g\left(\int_{B_\delta(y)} \rho_\delta(y - z) \Xi(z) \, dz\right) \\
& \le \int_{B_\delta(y)} \rho_\delta(y - z) g(\Xi(z)) \, dz \\
& \le \int_{B_\delta(y)} \rho_\delta(y - z) W(z) \, dz  = \tilde{W}_\delta(y).
\end{split}
\]
Now recall that $\alpha_\delta = \tilde{\alpha}_\delta + \delta$. Set
$\Xi_\delta = D\Phi_\delta + \alpha_\delta I = \tilde{\Xi}_\delta + \delta I$.
Then \eqref{eq:interpolation-g} gives
\[
g(\Xi_\delta) \le \frac{g(\tilde{\Xi}_\delta)}{1 - \delta} + \frac{g(\delta I)}{\delta} \le W_\delta.
\]

Lemma \ref{lem:minimising-over-sigma}
implies that
\[
(\Xi_\delta(y) : M)^2 \le W_\delta(y) \bigl(|M|^2 + 2 \sigma_\delta \det M\bigr)
\]
for all $M \in \R^{2 \times 2}$, where
\[
\sigma_\delta = \frac{\det \Xi_\delta}{g(\Xi_\delta)}.
\]
Note that $g(\Xi_\delta) \ge \frac{1}{2}|\Xi_\delta|^2 \ge \frac{1}{4} (\tr \Xi_\delta)^2 = \delta^2$. As $\Xi_\delta$ is smooth, it follows that
$\sigma_\delta \in C^{0, 1/2}(\R^2)$. It is clear that $|\sigma_\delta| \le 1$.

Lemma \ref{lem:inverting-curl} provides vector fields
$\omega_\delta \in C^1(\R^2; \R^2)$ such that
$\curl \omega_\delta = \sigma_\delta$ and such that \eqref{eq:growth-omega}
is satisfied for a universal constant $C$.
Inequality \eqref{eq:calibration-approximation} then follows from
Proposition \ref{prp:pointwise-characterisation}.
\end{proof}

We conclude this section with a brief discussion of how we proceed in the proofs
of our main results.
Owing to Proposition \ref{prp:calibration}, one of the central questions of this
paper is now whether we can satisfy the inequality
\[
f(D\Phi) \le W
\]
while simultaneously keeping $\Phi_1(a^+) - \Phi_1(a^-)$ large enough to
obtain a useful estimate. (The best possible value here is of course
\[
\Phi_1(a^+) - \Phi_1(a^-) = \int_{[a^-, a^+]} \sqrt{W} d\Ha^1.)
\]
Thus we want to find the number
\[
\eta_0 = \sup\set{\Phi_1(a^+) - \Phi_1(a^-)}{\Phi \in C^{0, 1}(\R^2; \R^2) \text{ with } f(D\Phi) \le W}.
\]
Alternatively, assuming that $W > 0$, we can define
\[
F(y, M) = \frac{f(M)}{W(y)}
\]
and try to determine
\[
e_\infty = \inf\set{\|F(y, D\Phi)\|_{L^\infty(\R^2)}^{1/2}}{\Phi_1(a^+) - \Phi_1(a^-) = 1}.
\]
(In general, we are interested in potential functions
$W$ that do have zeroes, but they can be approximated by
positive functions.)

For $\epsilon > 0$, if $\Phi \in C^{0, 1}(\R^2; \R^2)$ satisfies $f(D\Phi) \le W$ and $\Phi_1(a^+) - \Phi_1(a^-) \ge \eta_0 - \epsilon$, then the function
$\Psi = \Phi/(\Phi_1(a^+) - \Phi_1(a^-))$ will satisfy $\Psi_1(a^+) - \Psi_1(a^-) = 1$ and $F(y, D\Psi) \le (\eta_0 - \epsilon)^{-2}$.
It then follows that $e_\infty \le 1/\eta_0$. A similar scaling argument shows that $\eta_0 \ge 1/e_\infty$.
Thus $\eta_0 = 1/e_\infty$.

We therefore study problems of this nature in the next section.
Since these results are potentially of independent interest, we formulate them
more generally here.

\section{An $L^\infty$-minimisation problem} \label{sct:L-infinity}

In this section, we assume that $n, m \in \N$ and study functions
$\phi \colon \R^n \to \R^m$. Let $A \subseteq \R^n$ be a finite set
and $\Omega = \R^n \setminus A$. Given $m_0 \in \{1, \dotsc, m\}$, we
fix a non-constant function $\phi^0 \colon A \to \R^{m_0}$.
We further assume that $F \colon \R^n \times \R^{m \times n} \to [0, \infty)$
is a continuously differentiable function such that for every $x \in \R^n$, the function
$F(x, \blank)$ is homogeneous of degree $2$, $C^2$-regular away from $0$,
and uniformly strictly convex in the sense that there exists a constant
$c > 0$ such that
\begin{equation} \label{eq:strict-convexity}
D_M^2 F(x, M)(N, N) \ge 2c|N|^2
\end{equation}
for all $x \in \R^n$ and all $M, N \in \R^{m \times n}$, where $D_M^2F$
denotes the second derivative with respect to the second argument.
(The function $F(x, \blank)$ is not twice Fr\'echet differentiable at $0$
in general, but we can always interpret the left-hand side of
\eqref{eq:strict-convexity} in the G\^ateaux sense even at $0$.)
We further write $\grad F$ for the gradient of
$F$ with respect to the second argument only. We assume that there exists
another constant $C > 0$ such that
\begin{equation} \label{eq:uniformly-bounded}
|\grad F(x, M)| \le 2C|M|
\end{equation}
for all $x \in \R^n$ and $M \in \R^{m \times n}$.

We also consider the Legendre transform
\[
F^*(x, N) = \sup_{M \in \R^{m \times n}} (M : N - F(x, M))
\]
of $F$ with respect to the second argument.

Motivated by the previous section, we study functions that minimise the
functional
\[
E_\infty(\phi) = \esssup_{x \in \R^n} \sqrt{F(x, D\phi(x))}
\]
subject to the condition $\phi_i = \phi_i^0$ on $A$ for $i = 1, \dotsc, m_0$.

\begin{notation}
In this section, we write $B_r(x)$ for an open ball in $\R^n$ with radius $r > 0$ and centre $x \in \R^n$.
The symbol $\rho$ now denotes a function $\rho \in C_0^\infty(B_1(0))$ with $\rho \ge 0$ and
$\int_{B_1(0)} \rho \, dx = 1$. For $\delta > 0$, we then set $\rho_\delta(x) = \delta^{-n} \rho(x/\delta)$.
\end{notation}

\subsection{Summary of the results}

Following the ideas from a paper of Katzourakis and Moser \cite{Katzourakis-Moser:25}, we
derive some properties of the minimisers of $E_\infty$. They will be described in terms
of an $\R^m$-valued $1$-current on $\R^n$ and a mass functional depending
on $F$. Although currents will normally be defined in terms of differential
forms and the exterior derivative, if we only consider $1$-currents, then we
can work with matrix-valued functions and the Fr\'echet derivative instead.

The following is  a more general version of Definition \ref{def:current-intro}.

\begin{definition} \label{def:current}
An \emph{$\R^m$-valued $1$-current on $\R^n$} is an element of the dual space
of $C_0^\infty(\R^n; \R^{m \times n})$. If $T$ is an $\R^m$-valued
$1$-current, then its \emph{boundary} $\partial T$ is the $\R^m$-valued distribution such that $\partial T(\xi) = T(D\xi)$
for every $\xi \in C_0^\infty(\R^n; \R^m)$. The \emph{$F$-mass} of $T$
is
\[
\M_F(T) = \frac{1}{2} \sup\set{T(\zeta)}{\zeta \in C_0^\infty(\R^n; \R^{m \times n}) \text{ with } \|F(x, \zeta)\|_{C^0(\R^n)} \le 1}.
\]
We say that $T$ is \emph{normal} if there exists
$C \ge 0$ such that
\[
|T(\zeta)| + |\partial T(\xi)| \le C \left(\sup_{x \in \R^n} |\zeta(x)| + \sup_{n \in \R^n} |\xi(x)|\right)
\]
for all $\zeta \in C_0^\infty(\R^n; \R^{m \times n})$ and all $\xi \in C_0^\infty(\R^n; \R^m)$.
We write $\ncrt_{m \times n}(\R^n)$ for the space of all normal $\R^m$-valued $1$-currents on $\R^n$.
\end{definition}

Given $i \in \{1, \dotsc, m\}$, we write $T_i$ for the $\R$-valued
$1$-current such that $T_i(\xi) = T(e_i \otimes \xi)$ for
$\xi \in C_0^\infty(\R^n; \R^n)$, where $e_i$ denotes the $i$-th standard basis
vector of $\R^m$. We can think of these as the components of $T$. 

Many of the standard properties of currents, as described, e.g., in a book
by Simon \cite{Simon:83}, also apply to this variant. In particular, if $T$ has
finite $F$-mass, then, by the properties of $F$, it automatically has finite
mass in the standard sense. (In the above terminology, that means that $\M_{\tilde{F}}(T) < \infty$
for the function $\tilde{F}(x, M) = \frac{1}{4} |M|^2$.) It then follows that
there exist a Radon measure $\|T\|$ and a $\|T\|$-measurable, matrix-valued function
$\vec{T} \colon \R^n \to \R^{m \times n}$ with $|\vec{T}| = 1$ almost
everywhere, such that
\[
T(\zeta) = \int_{\R^n} \vec{T} : \zeta \, d\|T\|
\]
for any $\zeta \in C_0^\infty(\R^n; \R^{m \times n})$. In this situation, we can
also make sense of the expression $T(\zeta)$ for $\zeta \in C_0^0(\R^n; \R^{m \times n})$.
If $T$ is normal, then we can make sense of $\partial T(\xi)$ for any $\xi \in C_0^0(\R^n; \R^m)$.

We write $W_*^{1, \infty}(\R^n; \R^m)$ for the space of all
$\phi \in W_\loc^{1, \infty}(\R^n; \R^m)$ such that $\phi_i = \phi_i^0$ on
$A$ for $i = 1, \dotsc, m_0$. We set
\[
e_\infty = \inf\set{E_\infty(\phi)}{\phi \in W_*^{1, \infty}(\R^n; \R^m)}.
\]

We will prove the following two statements.

\begin{proposition} \label{prp:existence}
There exists $\phi_\infty \in W_*^{1, \infty}(\R^n; \R^m)$ such that $E_\infty(\phi_\infty) = e_\infty$.
\end{proposition}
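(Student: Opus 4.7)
The plan is a routine direct-method argument. First one verifies $e_\infty < \infty$: since $A$ is finite, the function $\phi^0$ (extended by $0$ in the components $i > m_0$) admits a Lipschitz extension to all of $\R^n$ (e.g.\ piecewise-linear on a triangulation adapted to $A$), giving an admissible function with finite $E_\infty$. Pick a minimising sequence $(\phi_k) \subset W_*^{1,\infty}(\R^n; \R^m)$ with $E_\infty(\phi_k) \to e_\infty$.

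Next I would extract a uniform Lipschitz bound. From the $2$-homogeneity of $F(x, \blank)$ one has, for $M \neq 0$, the Euler identity $D_M^2 F(x, M)(M, M) = 2 F(x, M)$, which combined with \eqref{eq:strict-convexity} gives $F(x, M) \ge c|M|^2$ for all $x, M$. Hence $\|D\phi_k\|_{L^\infty(\R^n)} \le E_\infty(\phi_k)/\sqrt{c}$ is uniformly bounded. Fix $x_0 \in A$; for the unconstrained components $i > m_0$, replace $\phi_k^i$ by $\phi_k^i - \phi_k^i(x_0)$ (which preserves both $E_\infty$ and the boundary values on $A$), while for $i \le m_0$ we have $\phi_k^i(x_0) = \phi^0_i(x_0)$ fixed. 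Then $(\phi_k(x_0))$ is bounded and, together with the Lipschitz bound, the sequence is uniformly bounded on every compact set.

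By Arzel\`a--Ascoli and a diagonal argument along a compact exhaustion of $\R^n$, a subsequence (not relabelled) converges locally uniformly to some $\phi_\infty \in W_\loc^{1,\infty}(\R^n; \R^m)$, with $D\phi_k \rightharpoonup D\phi_\infty$ weakly-$*$ in $L^\infty(\R^n; \R^{m \times n})$. Pointwise convergence at the finitely many points of $A$ yields $\phi_{\infty, i} = \phi^0_i$ on $A$ for $i \le m_0$, so $\phi_\infty \in W_*^{1,\infty}(\R^n; \R^m)$.

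The main step, and the only mildly technical one, is lower semicontinuity. Fix any $\lambda > e_\infty$ with $E_\infty(\phi_k) \le \lambda$ for large $k$, so that $F(x, D\phi_k) \le \lambda^2$ a.e. For each bounded open $U \subset \R^n$, the convexity of $F(x, \blank)$ together with the continuity in $x$ and the quadratic growth makes $\phi \mapsto \int_U F(x, D\phi) \, dx$ sequentially weakly lower semicontinuous on $W^{1,2}(U; \R^m)$; since $D\phi_k \rightharpoonup D\phi_\infty$ in $L^2(U; \R^{m \times n})$, one obtains
\begin{equation*}
\int_U F(x, D\phi_\infty) \, dx \le \liminf_{k \to \infty} \int_U F(x, D\phi_k) \, dx \le \lambda^2 |U|.
\end{equation*}
As $U$ is arbitrary, Lebesgue differentiation gives $F(x, D\phi_\infty(x)) \le \lambda^2$ a.e., hence $E_\infty(\phi_\infty) \le \lambda$; letting $\lambda \searrow e_\infty$ completes the argument, the reverse inequality being immediate. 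I expect the lower-semicontinuity step to be the only place requiring care, but it reduces entirely to the convexity and continuity of $F$ already assumed in the hypotheses; all other steps follow from the standard compactness machinery enabled by the pointwise constraint on the finite set $A$.
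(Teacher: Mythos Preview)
Your argument is correct and is exactly the direct-method proof the paper alludes to but deliberately avoids. The paper instead constructs $\phi_\infty$ as a limit of $L^p$-minimisers: for each $p \in (n,\infty)$ it minimises the weighted functional $E_p(\phi) = \bigl(\int_{\R^n} F(x,D\phi)^{p/2} V\,dx\bigr)^{1/p}$, obtains the unique minimiser $\phi_p$, and then passes to the limit $p \to \infty$ using H\"older monotonicity ($E_q \le E_p$ for $q \le p$) and weak lower semicontinuity of each $E_q$. This yields $E_\infty(\phi_\infty) = \lim_q E_q(\phi_\infty) \le \liminf_k E_{p_k}(\phi_{p_k}) \le E_\infty(\psi)$ for any competitor $\psi$.

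The two routes differ in what they produce beyond mere existence. Your direct method is self-contained and arguably cleaner for Proposition~\ref{prp:existence} alone: the lower-semicontinuity step via $\int_U F(x,D\phi)\,dx \le \lambda^2|U|$ followed by Lebesgue differentiation is a standard and efficient device. The paper's $L^p$-approximation, by contrast, is chosen because the Euler--Lagrange equations of the $\phi_p$ and the associated measures $\mu_p = e_p^{2-p} V F(x,D\phi_p)^{p/2-1}\mathcal{L}^n$ are the essential ingredients for constructing the current $T$ in Theorem~\ref{thm:current}; existence of $\phi_\infty$ falls out as a byproduct. So the paper's detour buys the entire machinery needed downstream, whereas your argument settles the proposition with minimal overhead but would not by itself supply the measure-function pairs on which the rest of Section~\ref{sct:L-infinity} depends.
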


\begin{theorem} \label{thm:current}
There exists $T \in \ncrt_{m \times n}(\R^n) \setminus \{0\}$ with the following properties.
\begin{enumerate}
\item $\supp \partial T \subseteq A$, and $\partial T_i = 0$ for
$i = m_0 + 1, \dotsc, m$.
\item If $S \in \ncrt_{m \times n}(\R^n)$ satisfies $\partial S = \partial T$,
then $\M_F(T) \le \M_F(S)$.
\item Let $\phi \in W_*^{1, \infty}(\R^n; \R^m)$ be a minimiser of
$E_\infty$ in $W_*^{1, \infty}(\R^n; \R^m)$ and let
$e_\infty = E_\infty(\phi)$. Then $\partial T(\phi) = 2 e_\infty \M_F(T)$ and
\[
\lim_{\delta \searrow 0} \int_{\R^n} \Biggl|\rho_\delta * D\phi - e_\infty \frac{\grad F^*(x, \vec{T})}{\sqrt{F^*(x, \vec{T})}}\Biggr|^2 \, d\|T\| = 0.
\]
\end{enumerate}
\end{theorem}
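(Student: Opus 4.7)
My plan is to prove both statements via an $L^p$-approximation scheme with $p \to \infty$, combined with convex duality, extending the strategy of Katzourakis--Moser \cite{Katzourakis-Moser:25} from the pure gradient case to our general $F$. For Proposition \ref{prp:existence}, the strict convexity \eqref{eq:strict-convexity} together with the $2$-homogeneity of $F(x,\blank)$ yields the coercivity bound $F(x,M) \ge c|M|^2$, so any minimizing sequence in $W_*^{1,\infty}(\R^n;\R^m)$ is globally Lipschitz with a uniform constant. After normalizing by fixing the value at some point of $A$, Arzelà--Ascoli provides a locally uniform limit $\phi_\infty$; continuity preserves the pointwise constraints on $A$, and convexity of $F(x,\blank)$ gives weak-$*$ lower semicontinuity of $E_\infty$, hence $E_\infty(\phi_\infty) = e_\infty$.

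For Theorem \ref{thm:current}, fix $R$ large enough that $A \subset B_R$ and, for each $p > n$, minimize
\[
\mathcal{F}_p^R(\phi) = \int_{B_R} F(x, D\phi)^{p/2} \, dx
\]
over those $\phi \in W^{1,p}(B_R; \R^m)$ with $\phi_i(x_j) = \phi_i^0(x_j)$ for all $x_j \in A$ and $i \le m_0$; the constraint is well defined because $W^{1,p} \hookrightarrow C^0$. Uniform strict convexity gives a unique minimizer $\phi_p^R$, and since the constraint is finite dimensional, its Euler--Lagrange equation takes the form
\[
-\div\!\left(F(x, D\phi_p^R)^{(p-2)/2}\, \grad F(x, D\phi_p^R)\right) = \sum_{x_j \in A} \lambda_j^{p,R}\, \delta_{x_j}
\]
with Neumann condition on $\partial B_R$, where $\lambda_j^{p,R} \in \R^m$ has vanishing components above index $m_0$. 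I then define an $\R^m$-valued $1$-current $T^{p,R}$ as the distribution with matrix-valued density $\mu_{p,R}^{-1} F(x, D\phi_p^R)^{(p-2)/2}\grad F(x, D\phi_p^R)$ for a suitable normalizing factor $\mu_{p,R}$. By construction $\partial T^{p,R}$ is supported in $A$ and $\partial T_i^{p,R} = 0$ for $i > m_0$, so $T^{p,R}$ already enjoys (i). The $2$-homogeneity identity $\grad F(x, M) : M = 2F(x, M)$ and the corresponding Legendre relation $F^*(x, \grad F(x, M)) = F(x, M)$ convert the primal energy $\mathcal{F}_p^R(\phi_p^R)^{1/p}$ into a control on $\M_F(T^{p,R})$.

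Next I would pass to the limit along a diagonal sequence $p, R \to \infty$. Testing the Euler--Lagrange equation against $\phi_p^R - \psi^0$ for a fixed Lipschitz extension $\psi^0$ of $\phi^0$ yields simultaneous bounds on $\mathcal{F}_p^R(\phi_p^R)^{1/p}$, on the Lagrange multipliers, and on $\M_F(T^{p,R})$. Banach--Alaoglu then produces a weak-$*$ limit $T \in \ncrt_{m\times n}(\R^n)$ inheriting the boundary properties (i), while the $\phi_p^R$ converge locally uniformly to a minimizer $\phi$ of $E_\infty$ (by a repetition of the Arzelà--Ascoli/lower-semicontinuity argument above). Minimality (ii) follows from weak-$*$ lower semicontinuity of $S \mapsto \M_F(S)$ combined with a standard comparison argument against arbitrary competitors sharing the boundary of $T$. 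The duality identity $\partial T(\phi) = 2 e_\infty \M_F(T)$ in (iii) emerges by passing to the limit in the primal--dual pairing provided by the Euler--Lagrange equation, using the Legendre identity above.

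The hardest part will be the quantitative $L^2$-statement in (iii). My plan is to derive it from the uniform strict convexity \eqref{eq:strict-convexity}, which quantitatively upgrades the asymptotic Young/Hölder equality underlying the $L^p$--$L^{p/(p-1)}$ duality: concretely,
\[
\int_{B_R} \Bigl| D\phi_p^R - \tau_{p,R}\, \grad F^*(x, \vec{T}^{p,R})\Bigr|^2 d\|T^{p,R}\| \longrightarrow 0
\]
for scalar weights $\tau_{p,R} \to e_\infty / \sqrt{F^*(x, \vec{T})}$. Transferring this bound to the limit, where $\phi$ is only Lipschitz and $\vec{T}$ only $\|T\|$-measurable, requires matching the mollification scale $\delta$ to the approximation parameters and ensuring that mollification commutes well enough with the weak-$*$ convergence $T^{p,R} \rightharpoonup T$; controlling this interchange, and checking that the resulting family is Cauchy as $\delta \searrow 0$, is the delicate technical obstacle of the argument.
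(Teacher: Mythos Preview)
Your overall $L^p$-approximation strategy is the right one and mirrors the paper's, but the logical order you propose for (ii) and (iii) is inverted, and this causes a genuine gap.

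You write that ``minimality (ii) follows from weak-$*$ lower semicontinuity of $S \mapsto \M_F(S)$ combined with a standard comparison argument.'' But lower semicontinuity only gives $\M_F(T) \le \liminf \M_F(T^{p,R})$, and there is no reason the approximating currents $T^{p,R}$ should themselves be $\M_F$-minimising among currents with their boundary; they arise from a PDE, not a mass problem. The paper's route is the opposite: first prove the general calibration inequality
\[
\partial S(\phi) \le 2E_\infty(\phi)\,\M_F(S)
\]
for \emph{every} normal $S$ with the right boundary support and \emph{every} admissible $\phi$ (Proposition~\ref{prp:calibration-current}), with equality iff the $L^2$-limit in (iii) holds. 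Once you know the specific $T$ you construct achieves equality, minimality (ii) is a one-line consequence: $2e_\infty \M_F(T) = \partial T(\phi) = \partial S(\phi) \le 2e_\infty \M_F(S)$. You never state this inequality, yet it is the pivot of the whole argument.

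Two further points. First, the paper replaces your $B_R$-plus-Neumann-plus-diagonal scheme by a single global problem on $\R^n$ weighted by a fixed $V \in L^1$ with $\int V = 1$; this keeps the $L^p$ problems well-posed on the whole space and removes the need to control boundary behaviour or mass escaping to infinity. Second, for the $L^2$ statement in (iii) the paper does not try to pass the estimate you wrote to the limit; instead it works with Hutchinson's measure-function pairs $(\mu_p, D\phi_p)$, upgrades weak to strong convergence via the strict convexity \eqref{eq:convexity1} (Proposition~\ref{prp:measure-function}), and uses an inner-variation identity to rule out concentration of $\mu_\infty$ at points of $A$ (Proposition~\ref{prp:no-point-masses}). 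Your sketch with ``scalar weights $\tau_{p,R} \to e_\infty/\sqrt{F^*(x,\vec{T})}$'' has a type mismatch (a scalar converging to a function of $x$) and does not address concentration at $A$, which is needed to know $T$ is normal and $\partial T$ is a finite sum of Dirac masses rather than involving their derivatives.
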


Note that the last statement gives a lot of information about the behaviour
of $\phi$ on $\supp T$. Indeed, we interpret it as a generalised version of
the equation
\[
D\phi = e_\infty \frac{\grad F^*(x, \vec{T})}{\sqrt{F^*(x, \vec{T})}},
\]
but since $\supp T$ will be a Lebesgue null set in general, such an equation
does not make sense pointwise.

The existence of a minimiser of $E_\infty$ can be proved with the direct method,
although, as one needs to work with weak* convergence in an $L^\infty$-space, some
of the details are not so obvious. We will use a different method for the proof of
Proposition \ref{prp:existence}, because we will obtain the minimiser as a side product
of the arguments for the proof of Theorem \ref{thm:current}.

\subsection{Properties of $F$ and $F^*$}

We first derive some properties of the function $F$ and its Legendre transform
$F^*$ that follow from the above assumptions, in particular the strict
uniform convexity \eqref{eq:strict-convexity}.

The homogeneity of $F$ implies that
\begin{equation} \label{eq:homogeneity}
\grad F(x, M) : M = 2F(x, M).
\end{equation}
Hence \eqref{eq:uniformly-bounded} gives rise to the inequality
\[
F(x, M) \le C|M|^2.
\]
According to Taylor's theorem, for any $x \in \R^n$ and any
$M, N \in \R^{m \times n}$, there exists $\theta \in (0, 1)$ such that
\begin{multline*}
F(x, N) = F(x, M) + \grad F(x, M) : (N - M) \\
+ \frac{1}{2} D_M^2 F(x, \theta M + (1 - \theta) N) (N - M, N - M).
\end{multline*}
Therefore,
\begin{equation} \label{eq:convexity1}
c|N - M|^2 \le F(x, N) - F(x, M) - \grad F(x, M) : (N - M).
\end{equation}
Using \eqref{eq:homogeneity} again, we can write the above inequality in the form
\begin{equation} \label{eq:convexity2}
c|N - M|^2 \le F(x, N) + F(x, M) - \grad F(x, M) : N.
\end{equation}
Inserting $N = 0$, we also see that
\[
F(x, M) \ge c|M|^2.
\]

The Legendre transform of $F$ is automatically homogeneous of degree $2$
and strictly convex again.
It is a well-known property of the Legendre transform that $\grad F^*(x, \blank)$
is the inverse of $\grad F(x, \blank)$ regarded as a map
$\R^{m \times n} \to \R^{m \times n}$. That is, if $N = \grad F(x, M)$,
then $M = \grad F^*(x, N)$, and vice versa. We then also find that
\[
2F^*(x, N) = \grad F^*(x, N) : N = M : \grad F(x, M) = 2F(x, M).
\]
That is,
\[
F(x, M) = F^*(x, \grad F(x, M)) \quad \text{and} \quad F^*(x, N) = F(x, \grad F^*(x, N)).
\]

In addition to the standard definition of the Legendre transform, we have
the following characterisation.

\begin{lemma} \label{lem:Legendre-transform}
For any $x \in \R^n$ and $N \in \R^{m \times n}$,
\[
F^*(x, N) = \frac{1}{4} \sup\set{(M : N)^2}{M \in \R^{m \times n} \text{ with } F(x, M) \le 1}.
\]
\end{lemma}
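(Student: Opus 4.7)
The plan is to recognize that the right-hand side is essentially a ratio in disguise. Using the $2$-homogeneity of $F(x, \blank)$ together with the lower bound $F(x, M) \ge c|M|^2$ (so that $F(x, M) = 0$ forces $M = 0$), every $M' \neq 0$ with $F(x, M') \le 1$ can be written as $M/\sqrt{F(x, M)}$ for some $M \neq 0$, and conversely. A scaling argument then identifies
\[
\frac{1}{4} \sup\set{(M : N)^2}{F(x, M) \le 1} = \frac{1}{4} \sup_{M \neq 0} \frac{(M : N)^2}{F(x, M)},
\]
and it suffices to show that this quantity, which I will call $G(x, N)$, equals $F^*(x, N)$.

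For the inequality $F^*(x, N) \ge G(x, N)$, I would fix an arbitrary $M_0 \neq 0$ and restrict the supremum defining $F^*$ to the one-dimensional subspace $\{s M_0 : s \in \R\}$. Using homogeneity, the function $s \mapsto s(M_0 : N) - s^2 F(x, M_0)$ is a downward-opening parabola whose maximum is $(M_0 : N)^2 / (4 F(x, M_0))$. Taking the supremum over $M_0 \neq 0$ gives the desired bound.

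For the reverse inequality $F^*(x, N) \le G(x, N)$, the key observation is that the very definition of $G$ yields $(M : N)^2 \le 4 F(x, M) \, G(x, N)$ for every $M \in \R^{m \times n}$ (the case $M = 0$ being trivial). Taking square roots,
\[
M : N - F(x, M) \le 2 \sqrt{F(x, M)} \sqrt{G(x, N)} - F(x, M),
\]
and setting $t = \sqrt{F(x, M)}$ reduces the right-hand side to $2t\sqrt{G(x, N)} - t^2$, whose maximum over $t \ge 0$ is exactly $G(x, N)$. Taking the supremum over $M$ on the left gives $F^*(x, N) \le G(x, N)$.

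There is no real obstacle here: once the rescaling step is carried out, the two inequalities are just the Cauchy-Schwarz-type dual pairing between a $2$-homogeneous convex function and its Legendre transform. The only ingredient needed beyond the elementary one-variable optimizations is the homogeneity of $F(x, \blank)$ and the fact that $F(x, M) > 0$ for $M \neq 0$, both of which are built into the standing assumptions.
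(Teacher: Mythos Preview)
Your proof is correct and follows essentially the same route as the paper: both arguments exploit the $2$-homogeneity of $F(x,\blank)$ to reduce the Legendre transform to the one-variable quadratic $t \mapsto t(M:N) - t^2 F(x,M)$, whose maximum is $(M:N)^2/(4F(x,M))$. The paper packages both inequalities into a single computation by parametrising $\R^{m\times n}\setminus\{0\}$ as $\{tM : t\in\R,\ F(x,M)=1\}$ and then optimising over $t$ first, whereas you split into two inequalities; the content is the same.
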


\begin{proof}
Fix $x \in \R^n$. For every $M \in \R^{m \times n} \setminus \{0\}$ there
exists $t > 0$ such that $F(x, tM) = 1$. Hence
\[
\begin{split}
F^*(x, N) & = \sup_{F(x, M) = 1} \, \sup_{t \in \R} \ (tM : N - F(x, tM)) \\
& = \sup_{F(x, M) = 1} \, \sup_{t \in \R} \ (tM : N - t^2).
\end{split}
\]
The function $t \mapsto tM : N - t^2$ attains its maximum at
$t = \frac{1}{2} M : N$. Therefore,
\[
F^*(x, N) = \sup_{F(x, M) = 1} \frac{(M : N)^2}{4}.
\]
Clearly this supremum is identical with the one in the lemma.
\end{proof}

As a consequence, we have an alternative representation of the $F$-mass $M_F$.

\begin{proposition} \label{prp:F-mass}
Let $T$ be an $\R^m$-valued $1$-current in $\R^n$ with finite $F$-mass. Then
\[
\M_F(T) = \int_{\R^n} \sqrt{F^*(x, \vec{T})} \, d\|T\|.
\]
\end{proposition}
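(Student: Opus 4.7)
The plan is to prove the two inequalities $\M_F(T) \le \int \sqrt{F^*(x, \vec{T})} \, d\|T\|$ and $\M_F(T) \ge \int \sqrt{F^*(x, \vec{T})} \, d\|T\|$ separately. For the upper bound, I would first derive the Fenchel-type estimate
\[
|M : N| \le 2\sqrt{F(x, M) F^*(x, N)}
\]
for all $M, N \in \R^{m \times n}$, which follows by optimizing the inequality $tM : N - F(x, tM) \le F^*(x, N)$ in $t > 0$ and using the degree-$2$ homogeneity of $F(x, \blank)$. Then for any admissible $\zeta \in C_0^\infty(\R^n; \R^{m \times n})$ with $F(x, \zeta(x)) \le 1$, this pointwise bound gives $\vec{T}(x) : \zeta(x) \le 2\sqrt{F^*(x, \vec{T}(x))}$ for $\|T\|$-a.e.\ $x$, hence $T(\zeta) \le 2 \int \sqrt{F^*(x, \vec{T})} \, d\|T\|$, and taking the supremum yields the upper bound.

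For the lower bound, the pointwise optimiser in Lemma \ref{lem:Legendre-transform} is $M_0(x) = \grad F^*(x, \vec{T}(x))/\sqrt{F^*(x, \vec{T}(x))}$, which is well defined $\|T\|$-a.e.\ (since $|\vec{T}| = 1$ and $F(x, \blank) \ge c|\blank|^2$ together force $F^*(x, \vec{T}) > 0$) and satisfies $F(x, M_0) = 1$ together with $M_0 : \vec{T} = 2\sqrt{F^*(x, \vec{T})}$ by the degree-$2$ homogeneity of $F^*$. I would fix $R > 0$ and a smooth cutoff $\chi_R$ equal to $1$ on $B_R$ and supported in $B_{R+1}$; then $\chi_R M_0$ is Borel, uniformly bounded by $1/\sqrt{c}$, compactly supported, and $F(x, \chi_R M_0) = \chi_R^2 F(x, M_0) \le 1$ almost everywhere. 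Given $\eta > 0$, Lusin's theorem applied to the Radon measure $\|T\|$ on $\overline{B_{R+3}}$ yields a closed set $E \subseteq B_{R+3}$ with $\|T\|(B_{R+3} \setminus E) < \eta$ on which $\chi_R M_0$ is continuous; via Tietze extension and a further smooth cutoff, this gives a continuous, compactly supported $\zeta_c$ with $\zeta_c = \chi_R M_0$ on $E$ and $\|\zeta_c\|_{L^\infty} \le 1/\sqrt{c}$. Replacing $\zeta_c$ by the projection
\[
\hat{\zeta}(x) = \frac{\zeta_c(x)}{\max\{1, \sqrt{F(x, \zeta_c(x))}\}}
\]
(continuous because $F$ is continuous and $F \ge c|\blank|^2$) preserves the identity $\hat{\zeta} = \chi_R M_0$ on $E$ while enforcing $F(x, \hat{\zeta}(x)) \le 1$ everywhere, and splitting $T(\hat{\zeta}) = \int_E + \int_{\R^n \setminus E}$ produces the estimate
\[
T(\hat{\zeta}) \ge 2\int_{B_R} \sqrt{F^*(x, \vec{T})} \, d\|T\| - \frac{2\eta}{\sqrt{c}}.
\]

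Finally, to replace $\hat{\zeta}$ by an admissible element of $C_0^\infty$, I would mollify: $\zeta_\delta = \rho_\delta * \hat{\zeta}$. By Jensen's inequality applied to the convex function $F(y, \blank)$,
\[
F(y, \zeta_\delta(y)) \le \int \rho_\delta(y - z) F(y, \hat{\zeta}(z)) \, dz,
\]
and replacing $F(y, \hat{\zeta}(z))$ by $F(z, \hat{\zeta}(z)) \le 1$ using the uniform continuity of $F$ on the compact set $\overline{B_{R+4}} \times \overline{B_{1/\sqrt{c}}(0)}$ yields $F(y, \zeta_\delta(y)) \le 1 + o(1)$ as $\delta \searrow 0$. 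Rescaling by $(1 + o(1))^{-1/2}$ gives admissible smooth test functions converging to $\hat{\zeta}$ uniformly, so $T(\zeta_k) \to T(\hat{\zeta})$. Combining the two steps and sending first $\eta \searrow 0$ and then $R \to \infty$ (monotone convergence on $\chi_R \nearrow 1$) establishes the lower bound. I expect the main obstacle to be reconciling the pointwise constraint $F(x, \zeta) \le 1$ with a measurable/continuous approximation of the optimiser $M_0$ against the possibly singular measure $\|T\|$; this is where the continuous projection onto the constraint set and the uniform-continuity argument in the mollification step are crucial, since direct mollification of $M_0$ could fail because $\|T\|$ need not be absolutely continuous with respect to Lebesgue measure.
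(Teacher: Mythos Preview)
Your argument is correct. The upper bound is identical to the paper's, via Lemma~\ref{lem:Legendre-transform}. For the lower bound you and the paper diverge: the paper avoids Lusin's theorem, Tietze extension, and the projection step altogether by first approximating $\vec{T}$ pointwise $\|T\|$-a.e.\ by uniformly bounded $\vec{T}_k \in C_0^\infty(\R^n; \R^{m \times n})$ and then setting
\[
\zeta_k(x) = \frac{k\,\grad F^*(x, \vec{T}_k(x))}{\sqrt{k^2 F^*(x, \vec{T}_k(x)) + 1}},
\]
which satisfies $F(x, \zeta_k) = k^2 F^*(x, \vec{T}_k)/(k^2 F^*(x, \vec{T}_k) + 1) \le 1$ \emph{identically} and gives $\vec{T}_k : \zeta_k \to 2\sqrt{F^*(x, \vec{T})}$ pointwise. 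The constraint is thus baked into the formula rather than enforced by a post-hoc projection. Your route is longer but has the advantage of making every approximation step explicit against the possibly singular measure $\|T\|$, and the projection $\zeta_c \mapsto \zeta_c/\max\{1, \sqrt{F(x, \zeta_c)}\}$ together with the uniform-continuity mollification argument (which is essentially the content of Lemma~\ref{lem:F-composed-with-convolution}) are exactly the right tools for this. The paper's trick is shorter but relies on the specific homogeneity structure to produce an admissible test function in one stroke; your approach would adapt more readily to constraints not of the form $F \le 1$ with $F$ $2$-homogeneous.
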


\begin{proof}
If $\zeta \in C_0^\infty(\R^n; \R^{m \times n})$ with
$\|F(x, \zeta)\|_{C^0(\R^n)} \le 1$, then
\[
T(\zeta) = \int_{\R^n} \zeta : \vec{T} \, d\|T\| \le 2\int_{\R^n} \sqrt{F^*(x, \vec{T})} \, d\|T\|
\]
by Lemma \ref{lem:Legendre-transform}. It follows that
\[
\M_F(T) \le \int_{\R^n} \sqrt{F^*(x, \vec{T})} \, d\|T\|.
\]

To prove the reverse inequality, we first note that there exists 
a sequence of uniformly bounded functions
$\vec{T}_k \in C_0^\infty(\R^n; \R^{m \times n})$ such that
$\vec{T}_k \to \vec{T}$ almost everywhere with respect to $\|T\|$.
Define
\[
\zeta_k(x) = \frac{k\grad F^*(x, \vec{T}_k(x))}{\sqrt{k^2 F^*(x, \vec{T}_k(x)) + 1}}, \quad k \in \N.
\]
Then
\[
\vec{T}_k(x) : \zeta_k(x) = \frac{2k F^*(x, \vec{T}_k(x))}{\sqrt{k^2 F^*(x, \vec{T}_k(x)) + 1}}
\]
for every $x \in \R^n$. Hence
\[
\vec{T}_k(x) : \zeta_k(x) \to 2\sqrt{F^*(x, \vec{T}(x))}
\]
almost everywhere. We also compute
\[
F(x, \zeta_k(x)) = \frac{k^2 F(x, \grad F^*(x, \vec{T}_k(x)))}{k^2 F^*(x, \vec{T}_k(x)) + 1} = \frac{k^2 F^*(x, \vec{T}_k(x))}{k^2 F^*(x, \vec{T}_k(x)) + 1} \le 1.
\]
Therefore,
\[
\int_{\R^n} \sqrt{F^*(x, \vec{T})} \, d\|T\| = \frac{1}{2} \lim_{k \to \infty}\int_{\R^2} \vec{T}_k : \zeta_k \, d\|T\| \le \M_F(T).
\]
This completes the proof.
\end{proof}

\begin{lemma} \label{lem:F-composed-with-convolution}
Let $K \subseteq \R^n$ be compact. Let $Q \in L^\infty(\R^n; \R^{m \times n})$, and let $U \subseteq \R^n$ be an open set with $K \subseteq U$.
Then
\[
\limsup_{\delta \searrow 0} \sup_{x \in K} F(x, \rho_\delta * Q(x)) \le \esssup_{x \in U} F(x, Q(x)).
\]
\end{lemma}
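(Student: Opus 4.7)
The plan is to combine two properties of $F$: convexity in the second argument (which follows from the uniform strict convexity in \eqref{eq:strict-convexity}) and continuity in the first argument. The convexity allows us to apply Jensen's inequality to push $F(x,\cdot)$ inside the convolution integral, while continuity in $x$ allows us to replace $F(x,Q(y))$ with $F(y,Q(y))$ at the cost of an arbitrarily small error, so that $\esssup_{y \in U} F(y, Q(y))$ appears on the right-hand side.

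First I would fix $\eta > 0$ and set $M_0 = \|Q\|_{L^\infty(\R^n)}$, so that $|\rho_\delta * Q(x)| \le M_0$ and $|Q(y)| \le M_0$ almost everywhere. Since $K$ is compact and contained in the open set $U$, choose an intermediate bounded open set $U'$ with $K \subseteq U' \subseteq \overline{U'} \subseteq U$, together with some $\delta_1 > 0$ such that $B_{\delta_1}(x) \subseteq U'$ for every $x \in K$. On the compact set $\overline{U'} \times \overline{B_{M_0}(0)}$, the continuous function $F$ is uniformly continuous, so there exists $\delta_0 \in (0, \delta_1)$ such that
\[
|F(x, N) - F(y, N)| < \eta \quad \text{whenever } x, y \in \overline{U'},\ |x - y| < \delta_0,\ |N| \le M_0.
\]

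Next, for $0 < \delta < \delta_0$ and $x \in K$, applying Jensen's inequality to the convex function $F(x, \cdot)$ and the probability measure with density $y \mapsto \rho_\delta(x - y)$ yields
\[
F(x, \rho_\delta * Q(x)) \le \int_{B_\delta(x)} \rho_\delta(x - y) F(x, Q(y)) \, dy.
\]
Since $y \in B_\delta(x) \subseteq U'$ and $|Q(y)| \le M_0$ for almost every such $y$, the uniform continuity estimate gives $F(x, Q(y)) \le F(y, Q(y)) + \eta$ for almost all $y \in B_\delta(x)$, hence
\[
F(x, \rho_\delta * Q(x)) \le \esssup_{z \in U} F(z, Q(z)) + \eta.
\]
Taking the supremum over $x \in K$, then the limsup as $\delta \searrow 0$, and finally letting $\eta \searrow 0$ completes the proof.

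There is no significant obstacle here; the main thing to keep track of is that one must use the joint uniform continuity of $F$ on a compact enlargement of $K \times \{|N| \le M_0\}$ (rather than continuity in $x$ alone), so that the swap from $F(x, Q(y))$ to $F(y, Q(y))$ is legitimate uniformly in the (essentially bounded) values of $Q$.
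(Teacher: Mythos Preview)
Your proof is correct and follows essentially the same approach as the paper: both use Jensen's inequality for the convex function $F(x,\cdot)$ to push $F$ inside the convolution, then invoke uniform continuity of $F$ on a suitable compact set to swap $F(x,Q(y))$ for $F(y,Q(y))$ up to an $\eta$-error. The only cosmetic difference is that the paper uses the compact set $\{x:\dist(x,K)\le 1\}\times\{|M|\le \|Q\|_{L^\infty}\}$ directly, whereas you first insert an intermediate bounded open set $U'$ with $\overline{U'}\subseteq U$.
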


\begin{proof}
Since $F$ is continuous, it is uniformly continuous on the compact set
\[
L = \set{(x, M) \in \R^n \times \R^{m \times n}}{\dist(x, K) \le 1 \text{ and } |M| \le \|Q\|_{L^\infty(\R^n)}}.
\]
Let $\epsilon > 0$, and fix $\delta_0 \in (0, 1]$ such that
$|F(x, M) - F(y, M)| \le \epsilon$ for all $(x, M) \in L$ and $(y, M) \in L$ with $|x - y| \le \delta_0$.
Let $x \in K$. By the convexity of $F$ and Jensen's inequality, we can estimate
\[
\begin{split}
F(x, \rho_\delta * Q) & \le \int_{\R^n} \rho_\delta(x - y) F(x, Q(y)) \, dy \\
& \le \int_{\R^n} \rho_\delta(x - y) F(y, Q(y)) \, dy + \epsilon \\
& \le \esssup_{y \in U} F(y, Q(y)) + \epsilon
\end{split}
\]
when $\delta < \delta_0$ and $\delta < \dist(K, \R^n \setminus U)$. The claim follows.
\end{proof}

\subsection{Approximation in $L^p$}

In order to take advantage of the usual tools from the calculus of variations,
we construct a minimiser of $E_\infty$ as the limit of solutions to more conventional
problems. To this end, we replace the $L^\infty$-norm by $L^p$-norms.

Let $V \colon \R^n \to (0, \infty)$ be a smooth function such that
\[
\int_{\R^n} V(x) \, dx = 1.
\]
For $n < p < \infty$, define the functionals
\[
E_p(\phi) = \left(\int_{\R^n} \bigl(F(x, D\phi)\bigr)^{p/2} V(x) \, dx\right)^{1/p}
\]
on the Sobolev space $W_\loc^{1, p}(\R^n; \R^m)$. Let $W_*^{1, p}(\R^n; \R^m)$ denote the space of all
$\phi \in W_\loc^{1, p}(\R^n; \R^m)$ such that $\phi_i = \phi_i^0$ on $A$
for $i = 1, \dotsc, m_0$. We consider the problem of minimising
$E_p$ in $W_*^{1, p}(\R^n; \R^m)$.

Since we now have a strictly convex functional given in terms of an integral,
we can use standard methods from the calculus of variations to make a few
statements immediately: there exists a unique minimiser
$\phi_p \in W_*^{1, p}(\R^n; \R^m)$, which satisfies the Euler-Lagrange
equation
\[
\div\left(V(x) \bigl(F(x, D\phi_p)\bigr)^{p/2 - 1} \grad F(x, D\phi_p)\right) = 0
\]
weakly in $\Omega$.
(Here the divergence is applied row-wise, so that we actually have a system
of $m$ equations.) We use the notation $\grad_i F$ for the $i$-th row of $\grad F$.
Then we can write
\begin{equation} \label{eq:Euler-Lagrange0}
\div\left(V(x) \bigl(F(x, D\phi_p)\bigr)^{p/2 - 1} \grad_i F(x, D\phi_p)\right) = 0, \quad i = 1, \dotsc, m.
\end{equation}
We note that this is satisfied weakly in $\Omega$ for $i = 1, \dotsc m_0$,
and even weakly in $\R^n$ for $i = m_0 + 1, \dotsc m$.

We obtain another necessary condition when we study inner variations of
$\phi_p$ of the form $\phi_p^t(x) = \phi_p(x + t\chi(x))$ for a vector field
$\chi \in C_0^\infty(\Omega; \R^2)$. A standard computation then gives
\begin{equation} \label{eq:inner-variations0}
\begin{split}
0 & = \left.\frac{d}{dt}\right| \bigl(E_p(\phi_p^t)\bigr)^p \\
& = \frac{p}{2} \int_{\R^n} V(x) \bigl(F(x, D\phi_p)\bigr)^{\frac{p - 2}{2}} \left(\grad F(x, D\phi_p) : (D\phi_p D\chi) - \dd{F}{x}(x, D\phi_p) \chi\right) \, dx \\
& \quad - \int_{\R^n} \bigl(F(x, D\phi_p)\bigr)^{p/2} \left(DV \chi + V \div \chi\right) \, dx,
\end{split}
\end{equation}
where $\dd{F}{x}$ denotes the derivative of $F$ with respect to the first
argument.

\begin{proof}[Proof of Proposition \ref{prp:existence}]
Let $\psi \in W_*^{1, \infty}(\R^n; \R^m)$. By H\"older's inequality,
for $n < q < p$, we have the inequalities
\begin{equation} \label{eq:monotone}
E_q(\phi_p) \le E_p(\phi_p) \le E_p(\psi) \le E_\infty(\psi).
\end{equation}
Thus, the family $(\phi_p)_{q < p < \infty}$ is bounded in
$W^{1, q}(B_R(0); \R^m)$ for any $q < \infty$ and any $R > 0$. We may
therefore choose a sequence $p_k \to \infty$ such that we have the
weak convergence $\phi_{p_k} \rightharpoonup \phi_\infty$ as $k \to \infty$
simultaneously in all of these spaces for some $\phi_\infty \in \bigcap_{q < \infty} W_*^{1, q}(\R^n; \R^m)$. We further see that
\begin{equation} \label{eq:e_p->e_infty}
E_\infty(\phi_\infty) = \lim_{q \to \infty} E_q(\phi_\infty) \le \liminf_{q \to \infty} \liminf_{k \to \infty} E_q(\phi_{p_k}) \le \liminf_{k \to \infty} E_{p_k}(\phi_{p_k}) \le E_\infty(\psi)
\end{equation}
by the lower semicontinuity of $E_q$ with respect to weak convergence,
H\"older's inequality again, and \eqref{eq:monotone}. In particular, we
see that $\phi_\infty$ belongs to $W_*^{1, \infty}(\R^n; \R^m)$
and is a minimiser of $E_\infty$ in this space.
\end{proof}

We continue to use the functions $\phi_p$ and $\phi_\infty$ for the rest of this section. Let
\[
e_p = E_p(\phi_p), \quad n < p \le \infty.
\]
Because we have assumed that $\phi^0$ is not constant, it is clear that
$e_p \neq 0$. The inequalities in \eqref{eq:monotone} and \eqref{eq:e_p->e_infty} imply that
$e_\infty = \lim_{p \to \infty} e_p$. It is now convenient to introduce the measures
\[
\mu_p = e_p^{2 - p} V(x) \bigl(F(x, D\phi_p)\bigr)^{p/2 - 1} \Le^n,
\]
where $\Le^n$ denotes the Lebesgue measure in $\R^n$. This is normalised so that
\[
\mu_p(\R^n) = e_p^{2 - p} \int_{\R^n} V(x) \bigl(F(x, D\phi_p)\bigr)^{p/2 - 1} \, dx \le e_p^{2 - p} \bigl(E_p(\phi_p)\bigr)^{p - 2} = 1
\]
by H\"older's inequality. The Euler-Lagrange equation \eqref{eq:Euler-Lagrange0} implies that
\begin{equation} \label{eq:Euler-Lagrange}
\int_{\R^n} \grad F(x, D\phi_p) : D\psi \, d\mu_p = 0
\end{equation}
for all $\psi \in C_0^\infty(\Omega)$. Note, however, that the identity also holds true
if we merely know that $\psi \in W_\loc^{1, p}(\R^n; \R^m)$ and $E_p(\psi) < \infty$ and
$\psi_1 = \dotsb = \psi_{m_0} = 0$ on $A$, because these conditions imply that the derivative
$\frac{d}{dt}|_{t = 0} E_p(\phi + t\psi)$ is given by the usual expression.

Equation \eqref{eq:inner-variations0} has the following representation
in terms of $\mu_p$:
\begin{equation} \label{eq:inner-variation}
\begin{split}
0 & = \frac{p}{2} \int_{\R^n} \left(\grad F(x, D\phi_p) : (D\phi_p D\chi) - \dd{F}{x}(x, D\phi_p) \chi\right) \, d\mu_p \\
& \quad - \int_{\R^n} F(x, \phi_p) \left(D(\log V) \chi + \div \chi\right) \, d\mu_p.
\end{split}
\end{equation}

The measure $\mu_p$ should be considered together with the function
$D\phi_p$. These two objects form a measure-function pair $(\mu_p, D\phi_p)$
with values in $\R^{m \times n}$ in the sense of Hutchinson \cite{Hutchinson:86}.
Since
\[
\int_{\R^n} |D\phi_p|^2 \, d\mu_p \le \frac{1}{c e_p^{p - 2}} \int_{\R^n} \bigl(F(x, D\phi_p)\bigr)^{p/2} V(x) \, dx \le \frac{e_p^2}{c}
\]
for every $p \in (n, \infty)$, we may assume that for the above sequence
$p_k \to \infty$, we simultaneously have the weak convergence of
$(\mu_{p_k}, D\phi_{p_k})$ to a measure-function pair $(\mu_\infty, Z_\infty)$
in the sense of Hutchinson. (This follows from \cite[Theorem 4.4.2]{Hutchinson:86}, but we may have to pass to a suitable subsequence.)
Here $\mu_\infty$ is a Radon measure on $\R^n$ and
$Z_\infty \in L^2(\mu_\infty; \R^{m \times n})$. Of course, we have a
uniform bound for
\[
\int_{\R^n} |\grad F(x, D\phi_p)|^2 \, d\mu_p
\]
as well, and we may therefore assume at the same time that
$(\mu_{p_k}, \grad F(x, D\phi_{p_k}))$ converges weakly to $(\mu_\infty, Y_\infty)$
for some function $Y_\infty \in L^2(\mu_\infty; \R^{m \times n})$.
Because of \eqref{eq:Euler-Lagrange}, we have the identity
\begin{equation} \label{eq:Euler-Lagrange-limit}
\int_{\R^n} Y_\infty : D\psi \, d\mu_\infty =0
\end{equation}
for all $\psi \in C_0^1(\R^n; \R^m)$ with $\psi_1 = \dotsb = \psi_{m_0} = 0$ on $A$.

We now want to prove the following additional properties.

\begin{proposition} \label{prp:measure-function}
\begin{enumerate}
\item \label{itm:strong-convergence}
Let $K \subseteq \Omega$ be a compact set. Then the convergence $(\mu_{p_k}, D\phi_{p_k}) \to (\mu_\infty, Z_\infty)$ is
strong in $K$ the sense of Hutchinson \cite{Hutchinson:86}. Equivalently,
\[
\int_K |Z_\infty|^2 \, d\mu_\infty = \lim_{k \to \infty} \int_K |D\phi_{p_k}|^2 \, d\mu_{p_k}.
\]
\item \label{itm:pointwise}
The identity $F(x, Z_\infty) = e_\infty^2$ holds at $\mu_\infty$-almost
every point $x \in \Omega$.
\item \label{itm:gradient-of-minimiser}
For any minimiser $\psi \in W_*^{1, \infty}(\R^n; \R^m)$ of $E_\infty$,
\[
\lim_{\delta \searrow 0} \int_\Omega |\rho_\delta * D\psi - Z_\infty|^2 \, d\mu_\infty = 0.
\]
\end{enumerate}
\end{proposition}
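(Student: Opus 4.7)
My plan is to set up a Minty-type monotonicity inequality based on the strict uniform convexity \eqref{eq:convexity2} together with the Euler-Lagrange equation \eqref{eq:Euler-Lagrange}. For any admissible $\psi \in W_*^{1, \infty}(\R^n; \R^m)$, the difference $\psi - \phi_p$ vanishes on $A$ and has enough integrability against $\mu_p$ (both $\phi_p$ and $\psi$ lie in $W^{1,p}(V \, dx)$, the former via $F(x, M) \ge c|M|^2$ and the latter via $V \in L^1$) to be admissible as a test function in \eqref{eq:Euler-Lagrange}. Together with the homogeneity identity \eqref{eq:homogeneity}, this yields
\[
\int \grad F(x, D\phi_p) : D\psi \, d\mu_p = 2 e_p^2.
\]
Applying \eqref{eq:convexity2} pointwise with $M = D\phi_p$, $N = D\psi$, and combining with the Hölder estimate $\int F(x, D\psi) \, d\mu_p \le E_p(\psi)^2$ (obtained from the conjugate exponents $p/2$ and $p/(p-2)$ against the density of $\mu_p$), I arrive at
\[
c \int |D\psi - D\phi_p|^2 \, d\mu_p \le E_\infty(\psi)^2 - e_p^2,
\]
whose right-hand side tends to $0$ as $p \to \infty$ whenever $\psi$ is a minimiser of $E_\infty$.

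For part (i), taking $\psi = \phi_\infty$ in the Minty inequality gives $\int |D\phi_{p_k} - D\phi_\infty|^2 \, d\mu_{p_k} \to 0$. To upgrade weak convergence of $(\mu_{p_k}, D\phi_{p_k})$ to $(\mu_\infty, Z_\infty)$ to strong convergence on compact $K \subseteq \Omega$, I expand $|D\phi_{p_k}|^2 = |D\phi_\infty|^2 + 2 D\phi_\infty : (D\phi_{p_k} - D\phi_\infty) + |D\phi_{p_k} - D\phi_\infty|^2$, handle the cross term by Cauchy-Schwarz, and pass to the limit in $\int_K |D\phi_\infty|^2 \, d\mu_{p_k}$ by mollifying $D\phi_\infty$, using the Hölder bound $\int g \, d\mu_p \le \|g\|_{L^{p/2}(V)}$ for the error, and choosing a diagonal $\delta = \delta_k \searrow 0$. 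For part (ii), strong convergence combined with $\int F(x, D\phi_{p_k}) \, d\mu_{p_k} = e_{p_k}^2$ gives $\int_\Omega F(x, Z_\infty) \, d\mu_\infty \le e_\infty^2$; dividing the inner variation identity \eqref{eq:inner-variation} by $p/2$ and sending $p \to \infty$ annihilates the lower-order term in $2/p$ and yields a limit identity on $(\mu_\infty, Z_\infty)$ which, combined with the upper bound $F(x, D\phi_\infty) \le e_\infty^2$ Lebesgue-a.e. and strict convexity of $F$, forces $F(x, Z_\infty) = e_\infty^2$ at $\mu_\infty$-almost every point of $\Omega$.

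For part (iii), given an arbitrary minimiser $\psi$, applying Minty directly to $\psi$ runs into the obstacle that $\int |D\psi - \rho_\delta * D\psi|^2 \, d\mu_{p_k}$ is not uniformly small in $k$: the Hölder bound gives only $\|D\psi - \rho_\delta * D\psi\|_{L^{p_k}(V)}^2$, which tends to $\|D\psi - \rho_\delta * D\psi\|_\infty^2$ as $p_k \to \infty$ and does not vanish with $\delta$. I circumvent this by constructing a smooth admissible approximation $\hat\psi_\delta \in W_*^{1,\infty}(\R^n; \R^m)$ that equals $\rho_\delta * \psi$ away from small neighbourhoods of the points of $A$ and is modified by a smooth correction of $L^\infty$-size $O(\delta)$ to restore the boundary condition $\hat\psi_{\delta,i} = \phi^0_i$ on $A$. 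Applying the Minty inequality directly to $\hat\psi_\delta$ gives
\[
c \int |D\hat\psi_\delta - D\phi_{p_k}|^2 \, d\mu_{p_k} \le E_\infty(\hat\psi_\delta)^2 - e_{p_k}^2.
\]
Lemma~\ref{lem:F-composed-with-convolution}, together with the $O(\delta)$ size of the correction, ensures $E_\infty(\hat\psi_\delta) \to e_\infty$ as $\delta \to 0$. Since $D\hat\psi_\delta$ is now continuous and bounded, strong convergence of the pair from (i) gives $\int |D\hat\psi_\delta - Z_\infty|^2 \, d\mu_\infty \to 0$, and the estimate $\|D\hat\psi_\delta - \rho_\delta * D\psi\|_\infty = O(\delta)$ then yields (iii).

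The main obstacle throughout is the interaction between Lebesgue-defined objects ($D\phi_\infty$, $D\psi$) and the potentially singular limit measure $\mu_\infty$: neither $D\phi_\infty$ nor $D\psi$ has a canonical pointwise value $\mu_\infty$-almost everywhere, which is precisely why (iii) is phrased in terms of $\rho_\delta * D\psi$ rather than $D\psi$. This necessitates systematic use of mollified approximations, diagonal subsequences, and in the case of (iii) the construction of a smooth admissible modification of the minimiser so that the Minty inequality can be applied directly to a function with continuous bounded gradient, thereby making strong convergence of the pair available.
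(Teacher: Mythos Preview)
Your global Minty inequality
\[
c\int_{\R^n} |D\psi - D\phi_p|^2 \, d\mu_p \;\le\; \int_{\R^n} F(x, D\psi)\, d\mu_p - e_p^2 \;\le\; e_\infty^2 - e_p^2
\]
for any minimiser $\psi$ is correct and elegant: testing \eqref{eq:Euler-Lagrange} with $\psi - \phi_p$ is legitimate, and the H\"older bound you quote is exactly right. This is genuinely different from the paper's route, which never forms this global estimate but instead works throughout with $\xi(\psi_\delta - \phi_p)$ for a compactly supported cutoff $\xi$ and the mollified $\psi_\delta$, picking up a boundary term in $D\xi$ that is controlled separately. However, your deduction of the three statements from the Minty inequality has two real gaps.

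First, for (iii) you claim $E_\infty(\hat\psi_\delta) \to e_\infty$ via Lemma~\ref{lem:F-composed-with-convolution}. That lemma is stated and proved only on compact sets: its proof uses uniform continuity of $F$ on a set of the form $\{\dist(x,K)\le 1\}\times\{|M|\le C\}$. Since $F$ is assumed merely $C^1$ with the uniform bounds $c|M|^2 \le F(x,M) \le C|M|^2$, nothing prevents $x\mapsto F(x,M)$ from oscillating without a uniform modulus as $|x|\to\infty$, and then $\sup_{x\in\R^n} F(x,\rho_\delta * D\psi(x))$ need not approach $e_\infty^2$. Your correction near $A$ is fine, but the failure is at infinity. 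This is precisely why the paper localises with $\xi$: all the $F(x, D\psi_\delta)$ terms in \eqref{eq:key-estimate} live on $\supp\xi$, where Lemma~\ref{lem:F-composed-with-convolution} applies. The same issue infects your diagonal argument for (i): the H\"older error bound $\| |D\phi_\infty|^2 - |\rho_\delta * D\phi_\infty|^2 \|_{L^{p_k/2}(V)}$ tends, as $p_k\to\infty$, to the $L^\infty$ norm, which does not vanish with $\delta$, so the diagonal does not identify the limit.

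Second, for (ii) you propose to divide the inner-variation identity by $p/2$ and pass to the limit. The resulting identity on $(\mu_\infty, Z_\infty)$ is a stationarity condition (a divergence-free stress tensor), but it does not by itself pin down the value $F(x,Z_\infty)=e_\infty^2$; constancy along streamlines is not the same as equality with $e_\infty^2$. The paper obtains the pointwise lower bound from Lemma~\ref{lem:cut-off} (the ``$\beta$-trick''): on the level set $\{F(x,D\phi_p)\le \beta^2 e_p^2\}$ the measure $\mu_p$ has mass at most $\beta^{p-2}$, which vanishes as $p\to\infty$ and forces $\int_\Omega \xi F(x,Z_\infty)\,d\mu_\infty \ge e_\infty^2\int_\Omega \xi\,d\mu_\infty$. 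Without that mechanism (or an equivalent), your route to (ii) is incomplete. In the paper the inner-variation identity is used for a different purpose (Proposition~\ref{prp:no-point-masses}), not for (ii).
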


The proof follows the strategy of the aforementioned paper \cite{Katzourakis-Moser:25}, which in turn
makes use of some ideas of Evans and Yu \cite{Evans-Yu:05} at this point. First, we require the following lemma.

\begin{lemma} \label{lem:cut-off}
Let $\xi \in C^0(\R^n)$ be a bounded function with $\xi \ge 0$.
Then for any $p \in (n, \infty)$ and any $\beta \in (0, 1)$,
\[
\int_{\R^n} \xi F(x, D\phi_p) \, d\mu_p \ge \beta^2 e_p^2 \int_{\R^n} \xi \, d\mu_p - \beta^p e_p^2 \sup_{\R^n} \xi.
\]
\end{lemma}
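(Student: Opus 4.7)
The plan is to unpack the definitions so that the claim reduces to a purely pointwise thresholding estimate. Using $\mu_p = e_p^{2-p} V(x)\bigl(F(x, D\phi_p)\bigr)^{p/2 - 1}\Le^n$, both integrals against $\mu_p$ become Lebesgue integrals against $V\,dx$. Writing $F = F(x, D\phi_p(x))$ for brevity and dividing the target inequality through by $e_p^{2-p}$, the claim becomes
\[
\int_{\R^n} \xi V F^{p/2} \, dx \ge \beta^2 e_p^2 \int_{\R^n} \xi V F^{p/2 - 1} \, dx - \beta^p e_p^p \sup_{\R^n} \xi,
\]
where I used $e_p^{2-p} \cdot e_p^2 = e_p^{4 - p}$ on one side and $e_p^{2-p} \cdot \beta^p e_p^2 = \beta^p e_p^{4-p}$ on the other, and then multiplied everything through by $e_p^{p-2}$ (noting $e_p > 0$ because $\phi^0$ is non-constant, so $e_p^{4-p} \cdot e_p^{p-2} = e_p^2$ and $e_p^{4-p} \cdot e_p^{p-2}\cdot e_p^{p-2}/e_p^{p-2} = e_p^p$).

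To prove the displayed inequality, I would split $\R^n$ along the threshold $\{F \ge \beta^2 e_p^2\}$ versus its complement. On the superlevel set, the pointwise bound $F \ge \beta^2 e_p^2$ multiplied by the nonnegative quantity $\xi V F^{p/2 - 1}$ gives $\xi V F^{p/2} \ge \beta^2 e_p^2 \, \xi V F^{p/2 - 1}$, so integrating over that set alone already captures the main term. On the complementary sublevel set, the bound $F^{p/2 - 1} < \beta^{p - 2} e_p^{p - 2}$ yields
\[
\beta^2 e_p^2 \int_{\{F < \beta^2 e_p^2\}} \xi V F^{p/2 - 1} \, dx \le \beta^p e_p^p \, (\sup_{\R^n}\xi) \int_{\R^n} V \, dx = \beta^p e_p^p \sup_{\R^n} \xi,
\]
using the normalisation $\int_{\R^n} V\,dx = 1$. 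Adding this to the superlevel-set inequality produces precisely the required bound, since the integral of $\xi V F^{p/2-1}$ over $\R^n$ is the sum of its restrictions to the two sets.

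There is no real obstacle here; the lemma is a clean layer-cake/thresholding estimate that only uses $\xi V \ge 0$, the normalisation of $V$, and the algebraic identity $\beta^2 e_p^2 \cdot \beta^{p-2} e_p^{p-2} = \beta^p e_p^p$ to produce the error term with the correct constant. The point of stating it in this form is presumably that the first term on the right is what one needs in order to extract $F(x, Z_\infty) \ge e_\infty^2$ in the limit $p \to \infty$ (taking $\beta \nearrow 1$ and using $\mu_p(\R^n) \le 1$ to kill the second term), which is exactly the pointwise information required for Proposition~\ref{prp:measure-function}\ref{itm:pointwise}.
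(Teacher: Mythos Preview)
Your proof is correct and follows essentially the same threshold-splitting argument as the paper: both split at the level set $\{F(x,D\phi_p) \ge \beta^2 e_p^2\}$, bound the main term on the superlevel set, and control the sublevel contribution using $\int_{\R^n} V\,dx = 1$ (equivalently, $\mu_p(S_p) \le \beta^{p-2}$). The only cosmetic difference is that the paper keeps the computation in terms of $\mu_p$ throughout rather than unpacking to Lebesgue measure; your parenthetical algebra explaining the reduction is somewhat garbled, but the displayed target inequality is correct and the subsequent argument is clean.
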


\begin{proof}
Consider the sets
\[
S_p = \set{x \in \R^n}{F(x, D\phi_p(x)) \le \beta^2 e_p^2}.
\]
Then
\[
\mu_p(S_p) = e_p^{2 - p} \int_{S_p} V(x) \bigl(F(x, D\phi_p)\bigr)^{p/2 - 1} \, dx \le \beta^{p - 2}.
\]
Therefore,
\[
\begin{split}
\int_{\R^n} \xi F(x, D\phi_p) \, d\mu_p & \ge \int_{\R^n \setminus S_p} \xi F(x, D\phi_p) \, d\mu_p \\
& \ge \beta^2 e_p^2 \int_{\R^n \setminus S_p} \xi \, d\mu_p \\
& = \beta^2 e_p^2 \left(\int_{\R^n} \xi \, d\mu_p - \int_{S_p} \xi \, d\mu_p\right) \\
& \ge \beta^2 e_p^2 \int_{\R^n} \xi \, d\mu_p - \beta^p e_p^2 \sup_{R^n} \xi,
\end{split}
\]
as claimed.
\end{proof}

\begin{proof}[Proof of Proposition \ref{prp:measure-function}]
Fix an arbitrary minimiser $\psi \in W_*^{1, \infty}(\R^n; \R^m)$
of $E_\infty$. Define $\psi_\delta = \rho_\delta * \psi$.
Let $\xi \in C_0^\infty(\Omega)$ with $0 \le \xi \le 1$, and let
$\beta \in (0, 1)$. Then by \eqref{eq:convexity1},
\eqref{eq:Euler-Lagrange}, and Lemma \ref{lem:cut-off}, we find that
\[
\begin{split}
\lefteqn{c\int_\Omega \xi |D\psi_\delta - D\phi_p|^2 \, d\mu_p} \quad \\
& \le \int_\Omega \xi \bigl(F(x, D\psi_\delta) - F(x, D\phi_p) - \grad F(x, D\phi_p) : (D\psi_\delta - D\phi_p)\bigr) \, d\mu_p \\
& = \int_\Omega \xi \bigl(F(x, D\psi_\delta) - F(x, D\phi_p)\bigr) \, d\mu_p + \int_\Omega \grad F(x, D\phi_p) : (\psi_\delta - \phi_p) \otimes D\xi \, d\mu_p \\
& \le \int_\Omega \xi F(x, D\psi_\delta) \, d\mu_p - \beta^2 e_p^2 \int_\Omega \xi \, d\mu_p + \beta^p e_p^2 \\
& \quad + \int_\Omega \grad F(x, D\phi_p) : (\psi_\delta - \phi_p) \otimes D\xi \, d\mu_p.
\end{split}
\]
Applying this inequality to $p_k$ and letting $k \to 0$, we obtain
\begin{multline*}
c \limsup_{k \to \infty} \int_\Omega \xi |D\psi_\delta - D\phi_{p_k}|^2 \, d\mu_{p_k} \\
\le \int_\Omega \xi F(x, D\psi_\delta) \, d\mu_\infty - \beta^2 e_\infty^2 \int_\Omega \xi \, d\mu_\infty + \int_\Omega Y_\infty : (\psi_\delta - \phi_\infty) \otimes D\xi \, d\mu_\infty.
\end{multline*}

For $r > 0$, define $B_r(A) = \bigcup_{x \in A} B_r(x)$. Suppose that $r$ is so small that
$B_r(a) \cap B_r(b) = \emptyset$ when $a, b \in A$ with $a \neq b$. Choose $\chi \in C_0^\infty(B_r(0))$
with $0 \le \chi \le 1$ and such that $\chi \equiv 1$ in $B_{r/2}(0)$ and $|D\chi| \le 4/r$
in $B_r(0)$. Furthermore, for $R \ge 1$ such that $B_r(A) \subseteq B_R(0)$, choose $\eta \in C_0^\infty(B_{2R}(0))$
with $0 \le \eta \le 1$ such that $\eta \equiv 1$ in $B_R(0)$ and $|D\eta| \le 2/R$. Set
\[
\xi(x) = \eta(x) - \sum_{a \in A} \chi(x - a).
\]

Note that $\psi_i(a) = \phi_{\infty i}(a)$ for $i = 1, \dotsc, m_0$ and $a \in A$. Hence
\[
\begin{split}
\lefteqn{\int_\Omega Y_\infty : (\psi_\delta - \phi_\infty) \otimes D\xi \, d\mu_\infty} \\
& = \int_{B_{2R}(0) \setminus B_R(0)} Y_\infty : (\psi_\delta - \phi_\infty) \otimes D\eta \, d\mu_\infty \\
& \quad - \sum_{a \in A} \int_{B_r(a)} Y_\infty(x) : (\psi_\delta(x) - \phi_\infty(x)) \otimes D\chi(x - a) \, d\mu_\infty(x) \\
& = \int_{B_{2R}(0) \setminus B_R(0)} Y_\infty : (\psi_\delta - \phi_\infty) \otimes D\eta \, d\mu_\infty \\
& \quad - \sum_{a \in A} \int_{B_r(a)} Y_\infty(x) : \bigl(\psi_\delta(x) - \psi(a) - \phi_\infty(x) + \phi_\infty(a)\bigr) \otimes D\chi(x - a) \, d\mu_\infty(x)
\end{split}
\]
by \eqref{eq:Euler-Lagrange-limit}. Therefore,
\begin{multline*}
c \limsup_{k \to \infty} \int_\Omega \xi |D\psi_\delta - D\phi_{p_k}|^2 \, d\mu_{p_k} \\
\begin{aligned}
& \le \int_\Omega \xi F(x, D\psi_\delta) \, d\mu_\infty - \beta^2 e_\infty^2 \int_\Omega \xi \, d\mu_\infty \\
& \quad + \int_{B_{2R}(0) \setminus B_R(0)} Y_\infty : (\psi_\delta - \phi_\infty) \otimes D\xi \, d\mu_\infty \\
& \quad + \sum_{a \in A} \int_{B_r(a)} Y_\infty : \bigl(\psi_\delta - \psi(a) - \phi_\infty + \phi_\infty(a)\bigr) \otimes D\xi \, d\mu_\infty.
\end{aligned}
\end{multline*}

Since $\phi_\infty$ is a minimiser of $E_\infty$, it satisfies
$F(x, D\phi_\infty) \le e_\infty^2$ almost everywhere. Hence
\[
|D\phi_\infty| \le \left(\frac{1}{c} F(x, D\phi_\infty)\right)^{1/2} \le \frac{e_\infty}{\sqrt{c}}.
\]
Similarly, we find that $|D\psi| \le e_\infty/\sqrt{c}$, as $\psi$ is also
a minimiser of $E_\infty$. Hence
\[
|\psi - \psi(a) - \phi_\infty + \phi_\infty(a)| \le \frac{2r e_\infty}{\sqrt{c}}
\]
in $B_r(a)$. Similarly, if $R \ge |\psi(0) - \phi_\infty(0)|$, then
\[
|\psi - \phi_\infty| \le \left(\frac{4e_\infty}{\sqrt{c}} + 1\right)R
\]
in $B_{2R}(0)$.
If $\delta \le r$, the we also have the inequality
\[
|\psi - \psi_\delta| \le \frac{re_\infty}{\sqrt{c}}.
\]
Therefore,
\begin{multline*}
\int_{B_{2R}(0) \setminus B_R(0)} Y_\infty : (\psi_\delta - \phi_\infty) \otimes D\xi \, d\mu_\infty \\
+ \sum_{a \in A} \int_{B_r(a)} Y_\infty : \bigl(\psi_\delta - \psi(a) - \phi_\infty + \phi_\infty(a)\bigr) \otimes D\xi \, d\mu_\infty \\
\begin{aligned}
& \le \left(\frac{10e_\infty}{\sqrt{c}} + 2\right) \int_{B_{2R}(0) \setminus B_R(0)} |Y_\infty| \, d\mu_\infty + \frac{12 e_\infty}{\sqrt{c}} \int_{B_r(A) \setminus B_{r/2}(A)} |Y_\infty| \, d\mu_\infty \\
& \le \left(\frac{12 e_\infty}{\sqrt{c}} + 2\right) \left(\mu_\infty(G_{R, r}) \int_{\R^n} |Y_\infty|^2 \, d\mu_\infty\right)^{1/2},
\end{aligned}
\end{multline*}
where $G_{R, r} = (B_{2R}(0) \setminus B_R(0)) \cup (B_r(A) \setminus B_{r/2}(A))$.
Given $\epsilon > 0$, we can choose $r$ so small and $R$ so large that
\[
\left(\frac{12 e_\infty}{\sqrt{c}} + 2\right) \left(\mu_\infty(G_{R, r}) \int_{\R^n} |Y_\infty|^2 \, d\mu_\infty\right)^{1/2} \le \epsilon.
\]

Let $K \subseteq \Omega$ be a compact set. Then we can further assume that $K \subseteq B_R(0)$ and 
$K \cap B_r(A) = \emptyset$. With the help of Lemma \ref{lem:F-composed-with-convolution}, we conclude that
\begin{equation} \label{eq:key-estimate}
\begin{split}
c \limsup_{k \to \infty} \int_K |D\psi_\delta - D\phi_{p_k}|^2 \, d\mu_{p_k} & \le \int_\Omega \xi \bigl(F(x, D\psi_\delta) - \beta^2 e_\infty^2\bigr) \, d\mu_\infty + \epsilon \\
& \le (1 - \beta^2) e_\infty^2 + 2\epsilon
\end{split}
\end{equation}
whenever $\delta$ is sufficiently small.
By \cite[Theorem 4.4.2]{Hutchinson:86}, it follows that
\[
c \int_K |D\psi_\delta - Z_\infty|^2 \, d\mu_\infty \le (1 - \beta^2) e_\infty^2 + 2\epsilon.
\]
This holds true for any $\beta \in (0, 1)$ and any $\epsilon > 0$, provided
that $\delta$ is small enough (depending on $\epsilon$). Hence
\begin{equation} \label{eq:local-L2-convergence}
\lim_{\delta \searrow 0} \int_K |D\psi_\delta - Z_\infty|^2 \, d\mu_\infty = 0.
\end{equation}

This $L^2$-convergence implies that there exists a sequence
$\delta_\ell \searrow 0$ such that $D\psi_{\delta_\ell} \to Z_\infty$
almost everywhere in $K$ (with respect to the measure $\mu_\infty$)
as $\ell \to \infty$. Hence
$F(x, D\psi_{\delta_\ell}) \to F(x, Z_\infty)$ almost everywhere in $K$.
Recalling that $F(x, D\psi) \le e_\infty^2$ almost everywhere (with respect
to the Lebesgue measure) and using Lemma \ref{lem:F-composed-with-convolution} again, we conclude that
$F(x, Z_\infty) \le e_\infty^2$ almost everywhere in $K$ (with respect to $\mu_\infty$).

Choosing another compact set $K' \subseteq \Omega$ such that
$\supp \xi \subseteq K'$, we similarly obtain the convergence
$D\psi_\delta \to Z_\infty$ in $L^2(\mu_\infty \restr K'; \R^{m \times n})$.
Hence
\[
\int_\Omega \xi F(x, Z_\infty) \, d\mu_\infty = \lim_{\delta \searrow 0} \int_\Omega \xi F(x, D\psi_\delta) \, d\mu_\infty.
\]
Inequality \eqref{eq:key-estimate}, on the other hand, implies that
\[
\lim_{\delta \searrow 0} \int_\Omega \xi F(x, D\psi_\delta) \, d\mu_\infty \ge \beta^2 e_\infty^2 \int_\Omega \xi \, d\mu_\infty - \epsilon.
\]
Again this holds true for any $\beta \in (0, 1)$ (and provided that $r$ is sufficiently small and $R$ is sufficiently large,
depending on $\epsilon$ but not on $\beta$). Hence
\[
\int_\Omega \xi F(x, Z_\infty) \, d\mu_\infty \ge e_\infty^2 \int_\Omega \xi \, d\mu_\infty - \epsilon.
\]
When we let $r \searrow 0$ and $R \to \infty$ again, then the integrand on the left-hand side converges to $F(x, Z_\infty)$
pointwise in $\Omega$. Since we know that it is bounded by $e_\infty^2$, we can apply Lebesgue's dominated
convergence theorem and obtain
\[
\int_\Omega F(x, Z_\infty) \, d\mu_\infty \ge e_\infty^2 \mu_\infty(\Omega).
\]
As we already know that $F(x, Z_\infty) \le e_\infty^2$ almost everywhere,
this implies statement \ref{itm:pointwise}.

Furthermore, since the functions $D\psi_\delta$ are uniformly bounded
and we now know that $Z_\infty \in L^\infty(\mu_\infty; \R^{m \times n})$,
the local strong convergence \eqref{eq:local-L2-convergence} implies
statement \ref{itm:gradient-of-minimiser}.

For the proof of statement \ref{itm:strong-convergence}, we go back to
\eqref{eq:key-estimate} once more. For a given number $\gamma > 0$, we see
that
\[
\limsup_{k \to \infty} \int_K |D\psi_\delta - D\phi_{p_k}|^2 \, d\mu_{p_k} \le \gamma
\]
for any sufficiently small $\delta > 0$. But for a fixed $\delta$, we also
compute
\[
\begin{split}
\lefteqn{\limsup_{k \to \infty} \int_K |D\psi_\delta - D\phi_{p_k}|^2 \, d\mu_{p_k}} \quad \\
& = \limsup_{k \to \infty} \int_K \bigl(|D\psi_\delta|^2 - 2D\psi_\delta : D\phi_{p_k}  + |D\phi_{p_k}|^2\bigr) \, d\mu_{p_k} \\
& = \int_K \bigl(|D\psi_\delta|^2 - 2D\psi_\delta : Z_\infty\bigr) \, d\mu_\infty + \limsup_{k \to \infty} \int_K |D\phi_{p_k}|^2\, d\mu_{p_k} \\
& = \int_K |D\psi_\delta - Z_\infty|^2 \, d\mu_\infty - \int_K |Z_\infty|^2 \, d\mu_\infty + \limsup_{k \to \infty} \int_K |D\phi_{p_k}|^2\, d\mu_{p_k}.
\end{split}
\]
It follows that
\[
\limsup_{k \to \infty} \int_K |D\phi_{p_k}|^2\, d\mu_{p_k} \le \int_K |Z_\infty|^2 \, d\mu_\infty,
\]
and by \cite[Theorem 4.4.2]{Hutchinson:86}, we have strong $L^2$-convergence
in $K$.
\end{proof}

We can improve the first statement in Proposition \ref{prp:measure-function} if we test
with functions that vanish on $A$.

\begin{corollary} \label{cor:strong-convergence}
Let $G \colon \R^n \times \R^{m \times n} \to \R$ be a continuous function, and suppose that there exists $h \in C_0^0(\R^n; [0, \infty))$ such that
$h(a) = 0$ for all $a \in A$ and $|G(x, M)| \le h|M|^2$ for all $x \in \R^n$ and all $M \in \R^{m \times n}$.
Then
\[
\int_{\R^n} G(x, Z_\infty) \, d\mu_\infty = \lim_{k \to \infty} \int_{\R^n} G(x, D\phi_{p_k}) \, d\mu_{p_k}.
\]
\end{corollary}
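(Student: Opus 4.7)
The plan is to reduce the claim, via a cutoff that isolates the singular set $A$, to the strong convergence of $(\mu_{p_k}, D\phi_{p_k})$ as a measure-function pair on compact subsets of $\Omega$ provided by Proposition~\ref{prp:measure-function}\ref{itm:strong-convergence}. The hypothesis that $h$ vanishes on $A$ and has compact support will absorb any possible concentration of $|D\phi_{p_k}|^2 \, d\mu_{p_k}$ near $A$ and at infinity, which is exactly what keeps the quadratic growth of $G$ in $M$ under control.

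More concretely, for each $r > 0$ I would choose a cutoff $\chi_r \in C^0(\R^n; [0,1])$ with $\chi_r \equiv 0$ on $B_{r/2}(A)$ and $\chi_r \equiv 1$ outside $B_r(A)$, and decompose
\[
\int_{\R^n} G(x, D\phi_{p_k}) \, d\mu_{p_k} = \int_{\R^n} \chi_r G(x, D\phi_{p_k}) \, d\mu_{p_k} + \int_{\R^n} (1 - \chi_r) G(x, D\phi_{p_k}) \, d\mu_{p_k},
\]
and similarly at the limit. The uniform estimate that drives the tail bound is
\[
\int_{\R^n} |D\phi_{p_k}|^2 \, d\mu_{p_k} \le \frac{1}{c} \int_{\R^n} F(x, D\phi_{p_k}) \, d\mu_{p_k} = \frac{e_{p_k}^2}{c},
\]
together with the analogous bound $\int |Z_\infty|^2 \, d\mu_\infty \le e_\infty^2 / c$ coming from Proposition~\ref{prp:measure-function}\ref{itm:pointwise}. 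Since $(1 - \chi_r) h$ vanishes outside $B_r(A) \cap \supp h$ and $h$ is continuous with $h|_A = 0$, one has $\sup_x (1 - \chi_r(x)) h(x) \to 0$ as $r \searrow 0$, so the tail integrals are controlled by $\bigl(\sup_{B_r(A)} h\bigr) e_{p_k}^2 / c$ uniformly in $k$, and the limit tail is controlled identically.

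For the interior piece $\chi_r G$, its $x$-support lies in a compact subset of $\Omega$, and $(x, M) \mapsto \chi_r(x) G(x, M)$ is continuous with the bound $|\chi_r G(x, M)| \le (\sup h) |M|^2$. Applying Proposition~\ref{prp:measure-function}\ref{itm:strong-convergence} together with the strong-convergence part of \cite[Theorem~4.4.2]{Hutchinson:86}, I obtain
\[
\lim_{k \to \infty} \int_{\R^n} \chi_r G(x, D\phi_{p_k}) \, d\mu_{p_k} = \int_{\R^n} \chi_r G(x, Z_\infty) \, d\mu_\infty.
\]
Finally, sending $r \searrow 0$ on the right via Lebesgue's dominated convergence with dominating function $h(x) |Z_\infty|^2 \in L^1(\mu_\infty)$, and combining with the uniform tail bound, closes the proof.

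The main subtlety is that Proposition~\ref{prp:measure-function}\ref{itm:strong-convergence} furnishes strong $L^2$-convergence of the measure-function pairs only on compact subsets of $\Omega$, while $G$ has the full quadratic growth in $M$ rather than a subquadratic one; a direct weak measure-function convergence argument would therefore not suffice. The cutoff near $A$ reduces the problem to compact subsets of $\Omega$, and it is precisely the hypothesis $h|_A = 0$ that makes the corresponding error vanish uniformly in $k$.
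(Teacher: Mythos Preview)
Your proposal is correct and follows essentially the same approach as the paper: both isolate a neighbourhood of $A$ where the smallness of $h$ together with the uniform bound $\int F(x,D\phi_{p_k})\,d\mu_{p_k}=e_{p_k}^2$ controls the error, and then invoke the strong convergence from Proposition~\ref{prp:measure-function}\ref{itm:strong-convergence} (via Hutchinson's theorem) on the remaining compact subset of $\Omega$, using the compact support of $h$ to eliminate any contribution at infinity. The only cosmetic difference is that the paper bounds the near-$A$ piece by $\epsilon F(x,M)$ rather than $(\sup_{B_r(A)}h)\,|M|^2$, which amounts to the same thing since $F(x,M)\ge c|M|^2$.
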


\begin{proof}
Let $\epsilon > 0$. There exists $r > 0$ such that $|h| \le c\epsilon$ in $B_r(A)$, which implies that
\[
|G(x, M)| \le \epsilon F(x, M)
\]
for every $x \in B_r(A)$ and $M \in \R^{m \times n}$. Therefore,
\[
\int_{B_r(A)} |G(x, D\phi_{p_k})| \, d\mu_{p_k} \le \epsilon \int_{\R^n} F(x, D\phi_{p_k}) \, d\mu_{p_k} \le e_{p_k}^2 \epsilon
\]
and
\[
\int_{B_r(A)} |G(x, Z_\infty)| \, d\mu_\infty \le \epsilon \int_{\R^n} F(x, Z_\infty) \, d\mu_\infty \le e_\infty^2 \epsilon.
\]
Choose $\xi \in C_0^0(\R^n \setminus A)$ with $\xi \equiv 1$ in $(\supp h) \setminus B_r(A)$.
By Proposition \ref{prp:measure-function}.(i), we have the convergence
\[
\int_{\R^n} \xi G(x, Z_\infty) \, d\mu_\infty = \lim_{k \to \infty} \int_{\R^n} \xi G(x, D\phi_{p_k}) \, d\mu_{p_k}.
\]
It now suffices to combine these facts.
\end{proof}

The following is also important, because it rules out that Proposition \ref{prp:measure-function} is vacuous.

\begin{proposition} \label{prp:non-zero}
The measure $\mu_\infty$ does not vanish.
\end{proposition}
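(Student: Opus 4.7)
The plan is to produce a fixed compact set on which each $\mu_p$ carries a uniformly positive amount of mass, and then transfer that bound to $\mu_\infty$ via weak convergence. The vehicle is to test the Euler-Lagrange equation \eqref{eq:Euler-Lagrange} against a difference $\psi = \phi_p - \phi^*$, where $\phi^* \in W_*^{1, \infty}(\R^n; \R^m)$ is a fixed smooth competitor whose gradient is compactly supported, say $\supp D\phi^* \subseteq \overline{B_{R_0}(0)}$ with $A \subseteq B_{R_0}(0)$. Such a $\phi^*$ is easy to construct: in the first $m_0$ components, smoothly interpolate the prescribed values $\phi^0$ over $A$ inside $B_{R_0}(0)$ and extend by constants outside; set the remaining components to zero. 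Because $\phi^0$ is non-constant, so is $\phi^*$, hence $0 < E_\infty(\phi^*) < \infty$. The uniform ellipticity of $F$, together with the finiteness of $E_p(\phi_p) = e_p$ and $E_\infty(\phi^*)$, forces $E_p(\psi) < \infty$, so $\psi$ qualifies as a test function for \eqref{eq:Euler-Lagrange} in the extended form noted just below that equation.

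Inserting $\psi$ into \eqref{eq:Euler-Lagrange} and using the 2-homogeneity $\grad F(x, M) : M = 2F(x, M)$ together with $D\phi^* \equiv 0$ outside $B_{R_0}(0)$ yields
\[
2 e_p^2 \;=\; 2 \int_{\R^n} F(x, D\phi_p) \, d\mu_p \;=\; \int_{B_{R_0}(0)} \grad F(x, D\phi_p) : D\phi^* \, d\mu_p.
\]
Lemma \ref{lem:Legendre-transform} delivers the Cauchy-Schwarz-type bound $A : B \le 2\sqrt{F^*(x, A)\, F(x, B)}$; applied with $A = \grad F(x, D\phi_p)$ and combined with the identity $F^*(x, \grad F(x, M)) = F(x, M)$, it gives
\[
\grad F(x, D\phi_p) : D\phi^* \;\le\; 2\, E_\infty(\phi^*)\, \sqrt{F(x, D\phi_p)}
\]
almost everywhere on $B_{R_0}(0)$. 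A further Cauchy-Schwarz in $L^2(\mu_p)$, combined with $\int_{\R^n} F(x, D\phi_p) \, d\mu_p = e_p^2$, then produces the uniform lower bound
\[
\mu_p(B_{R_0}(0)) \;\ge\; \left(\frac{e_p}{E_\infty(\phi^*)}\right)^2.
\]

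Passing to the limit along the subsequence $p_k \to \infty$, the right-hand side tends to $(e_\infty/E_\infty(\phi^*))^2 > 0$. Since Hutchinson's convergence of the measure-function pair $(\mu_{p_k}, D\phi_{p_k})$ includes weak convergence of $\mu_{p_k}$ to $\mu_\infty$ as Radon measures on $\R^n$, the standard Portmanteau inequality applied to the closed set $\overline{B_{R_0}(0)}$ yields
\[
\mu_\infty(\overline{B_{R_0}(0)}) \;\ge\; \limsup_{k \to \infty} \mu_{p_k}(\overline{B_{R_0}(0)}) \;\ge\; \left(\frac{e_\infty}{E_\infty(\phi^*)}\right)^2 \;>\; 0,
\]
so $\mu_\infty$ does not vanish. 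The main obstacle is purely bookkeeping: one must carefully verify that $\phi_p - \phi^*$ is genuinely admissible in \eqref{eq:Euler-Lagrange} despite not being compactly supported, and that the weak convergence of Radon measures supplied by Hutchinson's framework is strong enough to justify the final Portmanteau step on the closed set $\overline{B_{R_0}(0)}$.
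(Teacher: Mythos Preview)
Your argument is correct and shares its starting point with the paper's proof: both test the Euler-Lagrange identity \eqref{eq:Euler-Lagrange} with $\phi_p-\phi^*$ for a fixed competitor $\phi^*\in W_*^{1,\infty}$ to obtain $2e_p^2=\int\grad F(x,D\phi_p):D\phi^*\,d\mu_p$, with the integrand supported in a fixed compact set. The divergence is in how the limit $p_k\to\infty$ is handled. The paper passes to the limit \emph{inside} the integral, invoking the local strong convergence of the measure-function pair from Proposition~\ref{prp:measure-function}\ref{itm:strong-convergence}; this forces the extra requirement that $D\phi^*$ vanish in a neighbourhood of each point of $A$ (since strong convergence is only available on compact subsets of $\Omega=\R^n\setminus A$), and in return yields the sharper identity $e_\infty^2=\tfrac12\int\grad F(x,Z_\infty):D\phi^*\,d\mu_\infty$. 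You instead estimate \emph{before} taking the limit: the pointwise duality bound from Lemma~\ref{lem:Legendre-transform} followed by Cauchy--Schwarz in $L^2(\mu_p)$ gives the uniform lower bound $\mu_p(B_{R_0}(0))\ge(e_p/E_\infty(\phi^*))^2$, after which only weak convergence of the measures is needed. Your route is thus more elementary---it does not depend on Proposition~\ref{prp:measure-function} and imposes no condition on $D\phi^*$ near $A$---while the paper's route extracts more information (the limiting identity) at the cost of invoking the harder strong-convergence result.
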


\begin{proof}
Let $r > 0$ such that $B_r(a_1) \cap B_r(a_2) = \emptyset$ for $a_1, a_2 \in A$
with $a_1 \neq a_2$. Choose $\psi \in C_0^1(\R^n; \R^m)$ such that $\psi_i = \phi_i^0$
on $A$ for $i = 1, \dotsc, m_0$ and $D\psi = 0$ in $B_r(a)$ for every $a \in A$. Note that
\[
\begin{split}
e_p^2 & = \int_{\R^n} F(x, D\phi_p) \, d\mu_p \\
& = \frac{1}{2} \int_{\R^n} \grad F(x, D\phi_p) : D\phi_p \, d\mu_p \\
& = \frac{1}{2} \int_{\R^n} \grad F(x, D\phi_p) : D\psi \, d\mu_p
\end{split}
\] 
for every $p \in (n, \infty)$ by the definition of $\mu_p$ and equations
\eqref{eq:homogeneity} and \eqref{eq:Euler-Lagrange}. It then follows from
the local strong convergence in Proposition \ref{prp:measure-function} that
\[
e_\infty^2 = \lim_{k \to \infty} e_{p_k}^2 = \frac{1}{2} \int_{\R^n} \grad F(x, Z_\infty) : D\psi \, d\mu_\infty.
\]
But as the boundary data do not admit a constant function, we have
$e_\infty \neq 0$. Hence $\mu_\infty$ cannot vanish.
\end{proof}

We can finally improve the convergence from Proposition \ref{prp:measure-function} even more.

\begin{proposition} \label{prp:no-point-masses}
If $a \in A$, then $Z_\infty(a) = 0$ or $\mu_\infty(\{a\}) = 0$. Furthermore, for any compact set $K \subseteq \R^n$,
the convergence $(\mu_{p_k}, D\phi_{p_k}) \to (\mu_\infty, Z_\infty)$ is strong in $K$.
\end{proposition}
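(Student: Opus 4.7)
The plan has three steps. First, I use the limit Euler--Lagrange equation \eqref{eq:Euler-Lagrange-limit} to show that $Y_\infty(a) = 0$ whenever $\mu_\infty(\{a\}) > 0$. Second, this vanishing allows the proof of Proposition \ref{prp:measure-function}(i) to be extended across the singular set $A$, giving strong convergence on any compact $K \subseteq \R^n$. Third, the strong convergence identifies $Y_\infty = \grad F(\cdot, Z_\infty)$ $\mu_\infty$-almost everywhere on $\R^n$ (including the atom $a$), and combined with $Y_\infty(a) = 0$ and the strict convexity \eqref{eq:strict-convexity}, this yields $Z_\infty(a) = 0$.

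For the first step, fix $a \in A$ with $\mu_\infty(\{a\}) > 0$. For $i \in \{1, \dotsc, m\}$ and $v \in \R^n$, take $\psi(x) = e_i\, v \cdot (x-a)\, \chi_r(x)$, where $\chi_r \in C_0^\infty(B_r(a))$ is equal to $1$ near $a$ and $r$ is small enough that $B_r(a) \cap A = \{a\}$. Then $\psi_j$ vanishes on $A$ for every $j$, so $\psi$ is admissible in \eqref{eq:Euler-Lagrange-limit}. Since $\nabla\psi_i(a) = v$, this gives
\[
0 = \mu_\infty(\{a\})\, (Y_\infty)_{i\cdot}(a) \cdot v + \int_{B_r(a)\setminus \{a\}} (Y_\infty)_{i\cdot} \cdot \nabla\psi_i \, d\mu_\infty.
\]
Since $\|\nabla\psi_i\|_\infty \le C|v|$ uniformly in $r$, while the last integral tends to $0$ as $r \to 0$ by absolute continuity of $|(Y_\infty)_{i\cdot}|\,d\mu_\infty$ on the non-atomic part of $\mu_\infty$, we deduce $(Y_\infty)_{i\cdot}(a) \cdot v = 0$. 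As $i$ and $v$ are arbitrary, $Y_\infty(a) = 0$.

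For the second step, fix $a \in A$ and $r > 0$ with $\overline{B_r(a)} \cap A = \{a\}$, and take $\chi_r \in C_0^\infty(B_r(a))$ with $\chi_r \equiv 1$ on $B_{r/2}(a)$ and $|D\chi_r| \le 4/r$. Setting $\psi_\delta = \rho_\delta * \phi_\infty$, the function $\chi_r(\psi_\delta - \phi_p - C^\delta)$, where $C^\delta \in \R^m$ is defined by $C^\delta_i = (\rho_\delta * \phi_i^0)(a) - \phi_i^0(a)$ for $i \le m_0$ and $C^\delta_i = 0$ otherwise, is admissible in \eqref{eq:Euler-Lagrange} (its first $m_0$ components vanish at every point of $A$). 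Running the convexity argument \eqref{eq:convexity1} weighted by $\chi_r$ together with Lemma \ref{lem:cut-off}, exactly as in the proof of Proposition \ref{prp:measure-function}(i), one arrives at
\[
c \limsup_{k \to \infty} \int \chi_r |D\psi_\delta - D\phi_{p_k}|^2 \, d\mu_{p_k} \le (1-\beta^2) e_\infty^2 \mu_\infty(\overline{B_r(a)}) + I(\delta,r),
\]
with $|I(\delta,r)| \le C(1 + o_\delta(1)/r) \int_{B_r(a)} |Y_\infty| \, d\mu_\infty$, using Lipschitz continuity of $\phi_\infty$ and $|D\chi_r| \le 4/r$. By the first step, $Y_\infty\, d\mu_\infty$ carries no atom at $a$, so $\int_{B_r(a)} |Y_\infty| d\mu_\infty \to 0$ as $r \to 0$. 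Taking $\delta \to 0$, then $r \to 0$, then $\beta \nearrow 1$, the right side vanishes; combining with Proposition \ref{prp:measure-function}(i) via Hutchinson's theorem \cite[Theorem 4.4.2]{Hutchinson:86} yields the strong convergence of the measure-function pair on every compact $K \subseteq \R^n$.

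For the final step, the strong convergence and the continuity of $\grad F$ (together with its linear growth \eqref{eq:uniformly-bounded}) give $Y_\infty = \grad F(\cdot, Z_\infty)$ $\mu_\infty$-almost everywhere on $\R^n$. Since \eqref{eq:strict-convexity} makes $\grad F(a, \cdot)$ injective with $\grad F(a, 0) = 0$, the identity $\grad F(a, Z_\infty(a)) = Y_\infty(a) = 0$ forces $Z_\infty(a) = 0$, completing the first assertion. The main obstacle is the careful management of the iterated limits $\delta \to 0$, $r \to 0$, $\beta \nearrow 1$ in step two, together with the rigorous identification of $Y_\infty$ at atoms in step three; both rely on the tools already used in the proofs of Propositions \ref{prp:measure-function}(i) and \ref{prp:measure-function}(iii), but require a careful combination.
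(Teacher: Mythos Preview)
Your argument is correct (modulo the notational slip that $\rho_\delta * \phi_i^0$ should read $\rho_\delta * (\phi_\infty)_i$, since $\phi^0$ is only defined on the finite set $A$), but it follows a genuinely different route from the paper's proof. The paper does not use the limit Euler--Lagrange equation \eqref{eq:Euler-Lagrange-limit} at all here; instead it exploits the \emph{inner} variation identity \eqref{eq:inner-variation} with the radial test field $\chi(x)=\eta(|x-a|)(x-a)$, passes to the limit via Corollary~\ref{cor:strong-convergence}, and obtains directly that
\[
\lim_{r\searrow 0}\ \limsup_{k\to\infty}\int_{B_r(a)} F(x,D\phi_{p_k})\,d\mu_{p_k}=0,
\]
from which both assertions follow at once. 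Your approach is more elementary in that it avoids inner variations entirely: the observation that \eqref{eq:Euler-Lagrange-limit} forces $Y_\infty(a)=0$ at every atom is exactly what is needed to make the boundary term in the convexity estimate of Proposition~\ref{prp:measure-function} harmless across $A$, and the subsequent identification $Y_\infty=\grad F(\cdot,Z_\infty)$ via strong convergence closes the loop cleanly. The price you pay is the somewhat delicate management of the iterated limits $\delta\to 0$, $r\to 0$, $\beta\nearrow 1$ (in particular one must track that the annular overlap term $\int_{B_r(a)\setminus B_{r/2}(a)}|Z_\infty|^2\,d\mu_\infty$ also vanishes as $r\to 0$), whereas the paper's monotonicity-type argument yields a single clean estimate but requires having set up the inner-variation identity and Corollary~\ref{cor:strong-convergence} beforehand.
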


\begin{proof}
We use \eqref{eq:inner-variation} with $\chi(x) = \eta(|x - a|) (x - a)$
for a function $\eta \in C_0^\infty((0, R))$, where $R > 0$ is so small
that $\chi$ will vanish in a neighbourhood of $A$. We obtain
\begin{equation} \label{eq:inner-variations2}
\begin{split}
0 & = \int_{\R^n} \eta(|x - a|) \left(\grad F(x, D\phi_p) : D\phi_p - \dd{F}{x}(x, D\phi_p) (x - a)\right) \, d\mu_p \\
& \quad + \int_{\R^n} \frac{\eta'(|x - a|)}{|x - a|} \grad F(x, D\phi_p) : \bigl(D\phi_p ((x - a) \otimes (x - a))\bigr) \, d\mu_p \\
& \quad - \frac{2}{p} \int_{\R^n} \eta(|x - a|) F(x, D\phi_p) \left(D(\log V) (x - a) + n\right) \, d\mu_p \\
& \quad - \frac{2}{p} \int_{\R^n} |x - a| \eta'(|x - a|) F(x, D\phi_p) \, d\mu_p.
\end{split}
\end{equation}

Now consider $\eta \in C_0^\infty((-R, R))$ such that $\eta'$ vanishes in
a neighbourhood of $0$, and define
$\chi(x) = \eta(|x - a|)(x - a)$ again. We can find a sequence of functions
$\eta_\ell \in C_0^\infty((0, R))$ such that $\eta_\ell(t) = \eta(t)$ for
$t \ge 1/\ell$ and $|\eta_\ell'| \le C_1\ell$ for some constant $C_1$ independent of $\ell$. Then the functions
$\chi_\ell(x) = \eta_\ell(|x - a|) (x - a)$ converge uniformly to $\chi$,
and their derivatives $D\chi_\ell$ are uniformly bounded and converge to
$D\chi$ at every point in $\R^n \setminus \{a\}$. Using Lebesgue's
dominated convergence theorem, we therefore conclude that
\eqref{eq:inner-variations2} holds in this situation as well.

Therefore,
\begin{multline*}
\int_{\R^n} \eta(|x - a|) F(x, D\phi_p) \, d\mu_p \\
\begin{aligned}
& = \frac{1}{2} \int_{\R^n} \eta(|x - a|) \grad F(x, D\phi_p) : D\phi_p \, d\mu_p \\
& = \frac{1}{2} \int_{\R^n} \eta(|x - a|) \dd{F}{x}(x, D\phi_p) (x - a) \, d\mu_p \\
& \quad - \frac{1}{2} \int_{\R^n} \frac{\eta'(|x - a|)}{|x - a|} \grad F(x, D\phi_p) : \bigl(D\phi_p ((x - a) \otimes (x - a))\bigr) \, d\mu_p \\
& \quad + \frac{1}{p} \int_{\R^n} \eta(|x - a|) F(x, D\phi_p) \left(D(\log V) (x - a) + n\right) \, d\mu_p \\
& \quad + \frac{1}{p} \int_{\R^n} |x - a| \eta'(|x - a|) F(x, D\phi_p) \, d\mu_p.
\end{aligned}
\end{multline*}

We now restrict the identity to $p_k$ and let $k \to \infty$.
Clearly, we have  a constant $C_2$ such that
\[
\frac{1}{p} \int_{\R^n} |\eta(|x - a|)| F(x, D\phi_p) \left|D(\log V) (x - a) + n\right| \, d\mu_p \le C_2 \frac{e_p^2}{p}
\]
and
\[
\frac{1}{p} \int_{\R^n} |x - a| |\eta'(|x - a|)| F(x, D\phi_p) \, d\mu_p \le C_2 \frac{e_p^2}{p},
\]
and the right-hand sides converge to $0$ as $p \to \infty$.
For the remaining terms, we can use Corollary \ref{cor:strong-convergence}. We finally find
the identity
\begin{multline*}
\lim_{k \to \infty} \int_{\R^n} \eta(|x - a|) F(x, D\phi_{p_k}) \, d\mu_{p_k} \\
\begin{aligned}
& = \frac{1}{2} \int_{\R^n} \eta(|x - a|) \dd{F}{x}(x, Z_\infty) (x - a) \, d\mu_\infty \\
& \quad - \frac{1}{2} \int_{\R^n} \frac{\eta'(|x - a|)}{|x - a|} \grad F(x, Z_\infty) : \bigl(Z_\infty ((x - a) \otimes (x - a))\bigr) \, d\mu_\infty.
\end{aligned}
\end{multline*}

Let $r \in (0, R/2]$. If we choose $\eta$ such that $\eta \equiv 1$ in
$[0, r]$ and $\eta \equiv 0$ in $[2r, \infty)$, and such that it satisfies
$|\eta'| \le 2/r$ everywhere, then this inequality gives rise to a constant
$C_3$, independent of $r$, such that
\[
\limsup_{k \to \infty} \int_{B_r(a)} F(x, D\phi_{p_k}) \, d\mu_{p_k} \le C_3r + C_3\mu_\infty(B_{2r}(a) \setminus B_r(0)).
\]
Because
\[
\sum_{\ell = 1}^\infty \mu_\infty(B_{2^{1 - \ell}}(a) \setminus B_{2^{-\ell}}(a)) < \infty,
\]
it is clear that
\[
\liminf_{r \searrow 0} \mu_\infty(B_{2r}(a) \setminus B_r(0)) = 0.
\]
It therefore follows that
\begin{equation} \label{eq:no-concentration}
\lim_{r \searrow 0} \limsup_{k \to \infty} \int_{B_r(a)} F(x, D\phi_{p_k}) \, d\mu_{p_k} = 0.
\end{equation}
By \cite[Theorem 4.4.2]{Hutchinson:86} and Proposition \ref{prp:measure-function},
\[
\begin{split}
\limsup_{k \to \infty} \int_{B_r(a)} F(x, D\phi_{p_k}) \, d\mu_{p_k} & \ge \int_{B_r(a)} F(x, Z_\infty) \, d\mu_\infty \\
& = e_\infty^2 \mu_\infty(B_r(a) \setminus \{a\}) + F(a, Z_\infty(a)) \mu_\infty(\{a\}).
\end{split}
\]
Thus \eqref{eq:no-concentration} implies that $Z_\infty(a) = 0$ or $\mu_\infty(\{a\}) = 0$.

Moreover, combining this information with \eqref{eq:no-concentration} and the statement of Proposition \ref{prp:measure-function}.(i) in a way similar to the proof
of Corollary \ref{cor:strong-convergence}, we obtain
\[
\int_K |Z_\infty|^2 \, d\mu_\infty = \lim_{k \to \infty} \int_K |D\phi_{p_k}|^2 \, d\mu_{p_k},
\]
which is equivalent to strong convergence in $K$ by the results of Hutchinson \cite{Hutchinson:86}.
\end{proof}

\subsection{Currents} \label{sct:currents}

The measure-function pair $(\mu_\infty, Z_\infty)$ constructed in the preceding
subsection gives rise to the $1$-current from Theorem \ref{thm:current}.
Indeed, we define the $\R^m$-valued $1$-current $T_\infty$ such that
\[
T_\infty(\zeta) = \int_{\R^n} \grad F(x, Z_\infty) : \zeta \, d\mu_\infty
\]
for $\zeta \in C_0^\infty(\R^n; \R^{m \times n})$. It then follows from
\eqref{eq:Euler-Lagrange} and Proposition \ref{prp:no-point-masses} that
\[
\partial T_\infty(\xi) = 0
\]
for all $\xi \in C_0^\infty(\R^n; \R^m)$ such that
$\xi_1 = \dotsb = \xi_{m_0} = 0$ on $A$. That is, we know that
$\supp \partial T_\infty \subseteq A$ and $\partial T_{\infty i} = 0$ for
$i = m_0 + 1, \dotsc, m$.

To prove the remaining statements of Theorem \ref{thm:current}, we require
another proposition. This result also reveals a deeper connection
between $\R^m$-valued $1$-currents and the above variational problem.

\begin{proposition} \label{prp:calibration-current}
Suppose that $T \in \ncrt_{m \times n}(\R^n)$ satisfies $\supp \partial T \subseteq A$ and
$\partial T_{m_0 + 1} = \dotsb = \partial T_m = 0$.
Then for any $\phi \in W_*^{1, \infty}(\R^n; \R^m)$ with $E_\infty(\phi) < \infty$, the inequality
\[
\partial T(\phi) \le 2E_\infty(\phi) \M_F(T)
\]
is satisfied. Equality holds if, and only if,
\begin{equation} \label{eq:gradient-along-current}
\lim_{\delta \searrow 0} \int_{\R^n} \biggl|\rho_\delta * D\phi - E_\infty(\phi) \frac{\grad F^*(x, \vec{T})}{\sqrt{F^*(x, \vec{T})}}\biggr|^2 \, d\|T\| = 0.
\end{equation}
\end{proposition}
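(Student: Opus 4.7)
The plan is to combine the mollification identity $\partial T(\phi_\delta) = T(D\phi_\delta)$ with the homogeneous Fenchel--Young inequality $M : N \le 2\sqrt{F(x,M)\, F^*(x,N)}$ (a consequence of $F$ and $F^*$ both being homogeneous of degree two) and with Proposition~\ref{prp:F-mass}. Since $T$ is normal, its boundary $\partial T$ is a bounded $\R^m$-valued measure; the support condition forces $\partial T = \sum_{a \in A} c_a\, \delta_a$ with $(c_a)_i = 0$ for $i > m_0$, so that $\partial T(\phi) := \sum_a c_a \cdot \phi(a)$ is well-defined for the Lipschitz function $\phi$ (note $E_\infty(\phi) < \infty$ and $F(x,M) \ge c|M|^2$ imply $D\phi \in L^\infty$) and depends only on $\phi^0$.

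Setting $\phi_\delta = \rho_\delta * \phi$ and using a cutoff $\chi_R \in C_0^\infty(B_{2R})$ with $\chi_R \equiv 1$ on $B_R$ and $|D\chi_R| \le 2/R$, one has, for $R$ so large that $A \subseteq B_R$,
\[
T\bigl(D(\chi_R \phi_\delta)\bigr) = \partial T(\chi_R \phi_\delta) = \sum_{a \in A} c_a \cdot \phi_\delta(a).
\]
Expanding the left-hand side and sending $R \to \infty$, the boundary term $T(\phi_\delta \otimes D\chi_R)$ vanishes thanks to finiteness of $\|T\|$ and at-most-linear growth of $\phi_\delta$, leaving $\int \vec T : D\phi_\delta\, d\|T\| = \sum_a c_a \cdot \phi_\delta(a)$. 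Letting $\delta \to 0$, the right-hand side converges to $\partial T(\phi)$ by continuity of $\phi$. Applying Fenchel--Young pointwise and Proposition~\ref{prp:F-mass},
\[
\int \vec T : D\phi_\delta\, d\|T\| \le 2\, \bigl\|\sqrt{F(\blank,D\phi_\delta)}\bigr\|_{L^\infty(\R^n)} \M_F(T);
\]
by Lemma~\ref{lem:F-composed-with-convolution} on a large compact set, with the complement handled via $\int_{\R^n \setminus B_R} \sqrt{F^*(\blank,\vec T)}\, d\|T\| \to 0$ and the bound $F \le C|M|^2$, the essential supremum has $\limsup \le E_\infty(\phi)$, which yields the desired inequality.

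For the equality case, assume $\partial T(\phi) = 2 E_\infty(\phi) \M_F(T)$. All inequalities then saturate in the limit, so the Fenchel--Young defect $\int (2\sqrt{F(x,D\phi_\delta) F^*(x,\vec T)} - \vec T : D\phi_\delta)\, d\|T\| \to 0$, and $\sqrt{F(\blank,D\phi_\delta)} \to E_\infty(\phi)$ in $L^1(\sqrt{F^*(\blank,\vec T)}\, d\|T\|)$. The uniform strict convexity \eqref{eq:strict-convexity}, together with the duality $\grad F(x,\grad F^*(x,N)) = N$ and the degree-one homogeneity of $\grad F$, yields the quantitative Fenchel--Young estimate
\[
c\,\bigl|M - \lambda\, \grad F^*(x,N)\bigr|^2 \le \lambda\bigl(2\sqrt{F(x,M)\, F^*(x,N)} - M : N\bigr), \quad \lambda = \sqrt{F(x,M)/F^*(x,N)}.
\]
Applied with $M = D\phi_\delta$, $N = \vec T$ and integrated against $\|T\|$: since $F(x,M) \le C|M|^2$ dualises to $F^*(x,\vec T) \ge c' > 0$ a.e.\ with respect to $\|T\|$, the prefactor $\lambda$ is uniformly bounded, so the right-hand integral tends to zero and $D\phi_\delta - \lambda\, \grad F^*(\blank,\vec T) \to 0$ in $L^2(\|T\|)$. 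Combining this with the $L^2$-convergence $\lambda \to E_\infty(\phi)/\sqrt{F^*(\blank,\vec T)}$ (which upgrades from $L^1$ to $L^2$ by bounded convergence because $\sqrt{F^*}$ is bounded away from $0$) and the uniform bound $|\grad F^*(\blank,\vec T)|/\sqrt{F^*(\blank,\vec T)} \le C''$ delivers \eqref{eq:gradient-along-current}. The converse follows by a direct computation using $\grad F^*(x,\vec T) : \vec T = 2 F^*(x,\vec T)$ in the limit.

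The main obstacle is the equality case: passing from the integrated equality to the strong $L^2(\|T\|)$-convergence in \eqref{eq:gradient-along-current} requires the quantitative convexity estimate above, and crucially relies on the uniform lower bound $F^*(\blank,\vec T) \ge c' > 0$ to keep $\lambda$ bounded. A secondary technical point is the cutoff argument, which must accommodate $\phi$ of linear growth at infinity; finiteness of $\|T\|$ together with finiteness of $\M_F(T)$ is precisely what makes it work.
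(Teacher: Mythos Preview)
Your proposal is correct and follows essentially the same strategy as the paper: mollify $\phi$, use a Fenchel--Young type inequality pointwise against $\vec T$, control $F(x,D\phi_\delta)$ via Lemma~\ref{lem:F-composed-with-convolution} and Fatou, and pass to the limit using $\partial T(\phi_\delta)\to\partial T(\phi)$; the converse in the equality case is handled identically by the direct computation $\grad F^*(x,\vec T):\vec T=2F^*(x,\vec T)$.

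The one genuine difference is in the ``only if'' direction of the equality case. The paper applies \eqref{eq:convexity2} with the \emph{fixed} comparison matrix $M'=e_0\,\grad F^*(x,\vec T)/\sqrt{F^*(x,\vec T)}$, which in a single stroke yields
\[
c\int_{\R^n}\Bigl|D\psi_\delta-e_0\tfrac{\grad F^*(x,\vec T)}{\sqrt{F^*(x,\vec T)}}\Bigr|^2\sqrt{F^*(x,\vec T)}\,d\|T\|\;\le\;2e_0^2\M_F(T)-e_0\,\partial T(\psi_\delta)+o(1),
\]
so both the inequality and the strong convergence fall out together. You instead use the homogeneous Fenchel--Young bound $M:N\le 2\sqrt{F(x,M)F^*(x,N)}$ first, and then, for the equality case, a quantitative version with the \emph{variable} scaling $\lambda=\sqrt{F(x,D\phi_\delta)/F^*(x,\vec T)}$; this forces the extra step of showing $\lambda\to e_0/\sqrt{F^*(x,\vec T)}$ in $L^2(\|T\|)$, which you correctly obtain from $\sqrt{F(\blank,D\phi_\delta)}\to e_0$ in $L^1(\sqrt{F^*}\,d\|T\|)$ together with the uniform bounds. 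Both routes rest on the same ingredients (\eqref{eq:strict-convexity}, the lower bound $F^*(x,\vec T)\ge c'>0$, and Lemma~\ref{lem:F-composed-with-convolution}); the paper's choice of a fixed comparison point simply short-circuits your two-step argument.
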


\begin{proof}
We write $e_0 = E_\infty(\phi)$.
Define $\psi_\delta = \rho_\delta *\phi$. Using \eqref{eq:convexity2}, we
estimate
\[
\begin{split}
\lefteqn{c\int_{\R^n} \biggl|D\psi_\delta - e_0 \frac{\grad F^*(x, \vec{T})}{\sqrt{F^*(x, \vec{T})}}\biggr|^2 \sqrt{F^*(x, \vec{T})} \, d\|T\|} \quad \\
& \le \int_{\R^n} \bigl(F(x, D\psi_\delta) + e_0^2\bigr) \sqrt{F^*(x, \vec{T})} \, d\|T\| - e_0 \int_{\R^n} D\psi_\delta : \vec{T} \, d\|T\| \\
& = \int_{\R^n} \bigl(F(x, D\psi_\delta) + e_0^2\bigr) \sqrt{F^*(x, \vec{T})} \, d\|T\| - e_0 \partial T(\psi_\delta).
\end{split}
\]

Because $\phi \in W^{1, \infty}(\R^n; \R^m)$, there exists a constant $C_1 > 0$ such that
$F(x, D\psi_\delta) \le C_1$ for all $x \in \R^n$ and all $\delta > 0$.
Lemma \ref{lem:F-composed-with-convolution} implies that
\[
\limsup_{\delta \searrow 0} F(x, D\psi_\delta(x)) \le e_0^2
\]
for any $x \in \R^n$.
Applying Fatou's lemma to $C_1 - F(x, D\psi_\delta)$, we find that
\[
\limsup_{\delta \searrow 0} \int_{\R^n} F(x, D\psi_\delta) \sqrt{F^*(x, \vec{T})} \, d\|T\| \le e_0^2 \int_{\R^n} \sqrt{F^*(x, \vec{T})} \, d\|T\| = e_0^2 \M_F(T).
\]

Since $\phi$ is continuous, it is also clear that $\psi_\delta \to \phi$
locally uniformly in $\R^n$. As $\partial T$ is represented by a measure,
this implies that $\partial T(\psi_\delta) \to \partial T(\phi)$. Hence
\[
c \limsup_{\delta \searrow 0} \int_{\R^n} \biggl|D\psi_\delta - e_0 \frac{\grad F^*(x, \vec{T})}{\sqrt{F^*(x, \vec{T})}}\biggr|^2 \sqrt{F^*(x, \vec{T})} \, d\|T\| \le 2 e_0^2 \M_F(T) - e_0 \partial T(\phi).
\]
It follows that $\partial T(\phi) \le 2e_0 \M_F(T)$, and if we have
equality, then \eqref{eq:gradient-along-current} follows as well.

Now suppose that \eqref{eq:gradient-along-current} holds true.
Then
\[
\begin{split}
\partial T(\phi) & = \lim_{\delta \searrow 0} \partial T(\psi_\delta) \\
& = \lim_{\delta \searrow 0} \int_{\R^n} \vec{T} : D\psi_\delta \, d\|T\| \\
& = e_0 \int_{\R^n} \frac{\vec{T} : \grad F^*(x, \vec{T})}{\sqrt{F^*(x, \vec{T})}} \, d\|T\| \\
& = 2e_0 \int_{\R^n} \sqrt{F^*(x, \vec{T})} \, d\|T\| \\
& = 2e_0 \M_F(T).
\end{split}
\]
This completes the proof.
\end{proof}

\begin{proof}[Proof of Theorem \ref{thm:current}]
As mentioned earlier, we consider the current $T_\infty$ defined by the
condition that
\[
T_\infty(\zeta) = \int_{\R^n} \grad F(x, Z_\infty) : \zeta \, d\mu_\infty
\]
for $\zeta \in C_0^\infty(\R^n; \R^{m \times n})$. We will show that $T_\infty$
has the properties stated in Theorem \ref{thm:current}.

It is clear that $\M_F(T_\infty) < \infty$.
We have already seen at the beginning of this subsection that
$\supp \partial T_\infty \subseteq A$ and $\partial T_{\infty i} = 0$ for
$i = m_0 + 1, \dotsc, m$. This makes $\partial T_\infty$ a distribution
supported on a finite set, which means that it is a finite linear combination
of Dirac masses on $A$ and their derivatives \cite[Theorem 1.5.3]{Hoermander:64}.
But because $\|T\|(A) = 0$ by Proposition \ref{prp:no-point-masses}, it is easy to see that we have in fact
just a sum of Dirac masses. It follows that $T_\infty \in \ncrt_{m \times n}(\R^n)$.

By the definition of $T_\infty$, we have
\[
\vec{T}_\infty = \frac{\grad F(x, Z_\infty)}{|\grad F(x, Z_\infty)|}
\]
at $\mu_\infty$-almost every point. It follows that
\[
F^*(x, \vec{T}_\infty) = \frac{F^*(x, \grad F(x, Z_\infty))}{|\grad F(x, Z_\infty)|^2} = \frac{F(x, Z_\infty)}{|\grad F(x, Z_\infty)|^2} = \frac{e_\infty^2}{|\grad F(x, Z_\infty)|^2},
\]
and thus
\[
|\grad F(x, Z_\infty)| = \frac{e_\infty}{\sqrt{F^*(x, \vec{T}_\infty)}}
\]
almost everywhere. Hence
\[
Z_\infty = e_\infty \frac{\grad F^*(x, \vec{T})}{\sqrt{F^*(x, \vec{T})}}
\]
almost everywhere. In view of Proposition \ref{prp:no-point-masses}, the measure
$\|T\|$ is absolutely continuous with respect to $\mu_\infty \restr \Omega$. Moreover,
if $\phi \in W_*^{1, \infty}(\R^n; \R^m)$ is a minimiser of $E_\infty$, then
Proposition \ref{prp:measure-function} gives the convergence $\rho_\delta * D\phi \to Z_\infty$ in
$L^2(\|T\|; \R^{m \times n})$, and that implies
\eqref{eq:gradient-along-current} for $T_\infty$. Proposition \ref{prp:calibration-current}
implies that $\partial T_\infty(\phi) = 2 e_\infty \M_F(T_\infty)$, and thus we have
proved the last statement of Theorem \ref{thm:current}.

To prove the second statement, consider another $\R^m$-valued $1$-current $S \in \ncrt_{m \times n}(\R^n)$
with $\partial S = \partial T_\infty$. Then Proposition \ref{prp:calibration-current} gives
\[
2 e_\infty \M_F(T_\infty) = \partial T_\infty(\phi) = \partial S(\phi) \le 2 e_\infty \M_F(S).
\]
Since our assumptions on $\phi^0$ imply that $e_\infty > 0$, it follows that
$\M_F(T_\infty) \le \M_F(S)$. All the statements of the theorem are now verified.
\end{proof}

\section{Regularisation} \label{sct:regularisation}

We return to the problem of constructing calibrations as in
Section \ref{sct:differential-inequalities}. Therefore, we consider the
domain $\R^2$ again and we study functions $\Phi \colon \R^2 \to \R^2$.
Recall the function
\[
f(M) = \frac{1}{2} \left(|M|^2 - \frac{1}{2} (\tr M)^2 + |m_{12} - m_{21}| \sqrt{|M|^2 - 2\det M}\right)
\]
from Section \ref{sct:differential-inequalities}.
We would like to apply the results from Section \ref{sct:L-infinity} to
\[
F(x, M) = \frac{f(M)}{W(x)}.
\]
Unfortunately, this function does not have the required properties: it is convex,
but not strictly convex in $M$ and is Lipschitz regular at most. Unless
$W$ is bounded, uniformly positive, and of class $C^1$, it fails to
satisfy other assumptions, too. But the potentials $W$ we are most interested
in, will have zeroes, certainly at $a^\pm$ and possibly elsewhere.

For this reason, we need to replace the above function $F$ by regularised
approximations, and we need to show that the relevant properties persist
in the limit. We do this in two steps: first, we focus on the regularisation
of $f$. We can improve some of the properties of $W$ at the same time, but
we still assume that it is uniformly positive. In the second step, we
deal with the zeroes of $W$. Before we embark on this journey, however, we
extend the definition of the $F$-mass from Definition \ref{def:current} as follows.
Suppose that $F \colon \R^2 \times \R^{2 \times 2} \to [0, \infty]$ is a given function that
is convex in the second argument. Assuming that the Legendre transform $F^*$ with respect to the
second argument is Borel measurable on $\R^2 \times \R^{2 \times 2}$, we define
\[
\M_F(T) = \int_{\R^2} \sqrt{F^*(x, \vec{T})} \, d\|T\|
\]
for any $\R^2$-valued $1$-current $T$ on $\R^2$ with locally finite mass.
This is consistent with the previous definition by Proposition \ref{prp:F-mass}.

\subsection{A Korn type inequality}

Our theory will naturally give rise to inequalities such as
$f(D\Phi) \le W$ in $\R^2$ for certain functions
$\Phi \in W_\loc^{1, p}(\R^2; \R^2)$ and for certain exponents
$p \in (1, \infty)$. But because the function $f$ controls only the
trace free part of $D\Phi$, not the full Jacobian matrix, we need to
have a closer look if we want to derive estimates in $W_\loc^{1, p}(\R^2; \R^2)$.
Such estimates are available, and follow in fact quite easily from a
local version of Korn's inequality (as stated and proved, e.g., by
Kondrat\cprime ev and Ole\u\i nik \cite[\S 2, Theorem 8]{Kondratev-Oleinik:88}).
In this subsection, we
formulate the appropriate inequality in the balls $B_R(0)$ and study
how the corresponding constant depends on $R$.

First, however, we write down an observation that explains why Korn's
inequality is useful here. Given $\Phi \in W_\loc^{1, p}(\R^2; \R^2)$,
consider $\Phi^\perp = (\begin{smallmatrix} -\Phi_2 \\ \Phi_1 \end{smallmatrix})$
and its symmetrised derivative
\[
(D\Phi^\perp)_\sym = \begin{pmatrix}
-\dd{\Phi_2}{y_1} & \frac{1}{2} \left(\dd{\Phi_1}{y_1} - \dd{\Phi_2}{y_2}\right) \\ \frac{1}{2} \left(\dd{\Phi_1}{y_1} - \dd{\Phi_2}{y_2}\right) & \dd{\Phi_1}{y_2}
\end{pmatrix}.
\]
Then we note that
\[
|(D\Phi^\perp)_\sym|^2 = |D\Phi|^2 - \frac{1}{2} (\div \Phi)^2 \le 2f(D\Phi).
\]

We can now prove the following lemma.

\begin{lemma} \label{lem:Korn}
For every $p \in (n, \infty)$ there exists a constant $C \ge 0$ such
that the following holds true.
Let $h \colon [1, \infty) \to [0, \infty)$ be a non-decreasing function,
and suppose that $\Phi \in W_\loc^{1, p}(\R^2; \R^2)$ satisfies
\[
\left(\fint_{B_R(0)} \bigl(f(D\Phi)\bigr)^{p/2} \, dy\right)^{1/p} \le h(R)
\]
for every $R \ge 1$. Then there exists $b \in \R$ such that
\[
\left(\fint_{B_R(0)} \left|D\Phi(y) - bI\right|^p \, dy\right)^{1/p} \le C R^{2/p} h(R)
\]
and
\[
\sup_{y \in B_R(0)} |\Phi(y) - \Phi(0) - by| \le CR^{1 + 2/p} h(R)
\]
for every $R \ge 1$.
\end{lemma}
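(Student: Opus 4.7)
The plan is to exploit the observation recorded immediately before the lemma, namely that $|(D\Phi^\perp)_\sym|^2 \le 2 f(D\Phi)$. This reduces the statement to a quantitative Korn inequality applied to the auxiliary field $\Phi^\perp$, together with Morrey's embedding for the supremum bound.

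First I would apply the standard local Korn inequality to $\Phi^\perp$ on each ball $B_R(0)$. By rescaling to $B_1(0)$ via $\tilde u(y) = \Phi^\perp(Ry)/R$, the Korn constant can be made independent of $R$ (the gradient and its symmetric part scale the same way under this rescaling), so there exists a skew-symmetric matrix
$$A_R = \begin{pmatrix} 0 & -b_R \\ b_R & 0 \end{pmatrix}$$
such that
$$\left(\fint_{B_R(0)} |D\Phi^\perp - A_R|^p\,dy\right)^{1/p} \le C \left(\fint_{B_R(0)} |(D\Phi^\perp)_\sym|^p\,dy\right)^{1/p} \le C' h(R).$$

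Next I need to extract a single constant $b$. I would set $b = b_1$ so that $A_1$ is attached to the ball $B_1(0)$, and compare $A_R$ with $A_1$ on $B_1(0)$. Using the triangle inequality under the $L^p(B_1)$-norm together with the fact that $h$ is non-decreasing, a direct computation gives
$$|A_R - A_1| \le C R^{2/p} h(R),$$
and therefore, after one more triangle inequality on $B_R$,
$$\left(\fint_{B_R(0)} |D\Phi^\perp - A_1|^p\,dy\right)^{1/p} \le C R^{2/p} h(R).$$
A direct inspection of the four entries shows that $|D\Phi(y) - b I| = |D\Phi^\perp(y) - A_1|$ pointwise in the Frobenius norm (the entries coincide up to signs and permutation), which yields the first assertion.

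For the pointwise estimate, I would set $\Psi(y) = \Phi(y) - \Phi(0) - b y$, so that $\Psi(0) = 0$ and $D\Psi = D\Phi - bI$. Since $p > 2$, Morrey's embedding on $B_R(0)$ — with a constant made $R$-independent by the standard rescaling — gives
$$|\Psi(y)| = |\Psi(y) - \Psi(0)| \le C R^{1 - 2/p} \|D\Psi\|_{L^p(B_R(0))}.$$
The first part of the lemma upgrades to $\|D\Psi\|_{L^p(B_R(0))} \le C R^{4/p} h(R)$, and combining the two bounds gives the required $C R^{1 + 2/p} h(R)$.

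The main technical point is not any single step in isolation but the book-keeping of the $R$-dependence: Korn's constant on $B_R$, the comparison between $A_R$ and $A_1$, and Morrey's constant must each be controlled (either absolutely or with an explicit power of $R$) and the non-decreasing hypothesis on $h$ is what permits the comparison step to close with the stated growth rate rather than something worse.
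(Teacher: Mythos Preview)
Your proposal is correct and follows essentially the same approach as the paper: the paper rescales to the unit ball, applies the local Korn inequality (invoking exactly the $\Phi^\perp$ observation you use) to obtain a constant $b_R$, then compares $b_R$ with $b_1$ on $B_1(0)$ via the triangle inequality and the monotonicity of $h$, and finally uses Morrey's inequality for the sup bound. The only cosmetic difference is the order of operations---the paper applies Morrey with $b_R$ first and then corrects to $b_1$, whereas you first fix $b=b_1$ and then apply Morrey---but the ingredients and $R$-bookkeeping are identical.
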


\begin{proof}
Given $R > 0$, we define $\Psi_R(x) = R^{-1} \Phi(Rx)$. Then
\[
\left(\fint_{B_1(0)} \bigl(f(D\Psi_R)\bigr)^{p/2} \, dy\right)^{1/p} \le h(R).
\]
It follows immediately from the local version of Korn's inequality
\cite[\S 2, Theorem 8]{Kondratev-Oleinik:88} that there exists $b_R \in \R$
such that
\[
\left(\fint_{B_1(0)} \left|D\Psi_R(y) - b_R I\right|^p \, dy\right)^{1/p} \le C_1 h(R)
\]
for some constant $C_1$ that depends only on $p$. Morrey's inequality then gives a constant
$C_2 = C_2(p)$ such that
\[
\sup_{y \in B_1(0)} |\Psi_R(y) - \Psi_R(0) - b_Ry| \le C_2 h(R).
\]
In terms of $\Phi$, this means that
\[
\left(\fint_{B_R(0)} \left|D\Phi(y) - b_R I\right|^p \, dy\right)^{1/p} \le C_1 h(R)
\]
and
\[
\sup_{y \in B_R(0)} |\Phi(y) - \Phi(0) - b_Ry| \le C_2 Rh(R).
\]

The first inequality implies in particular that
\[
\left(\fint_{B_1(0)} \left|D\Phi(y) - b_R I\right|^p \, dy\right)^{1/p} \le C_1 R^{2/p} h(R).
\]
But at the same time, we have the inequality
\[
\left(\fint_{B_1(0)} \left|D\Phi(y) - b_1 I\right|^p \, dy\right)^{1/p} \le C_1 h(1).
\]
Hence there exists a constant $C_3$ such that $|b_R - b_1| \le C_3 R^{2/p} h(R)$. Choosing $b = b_1$, we therefore obtain the desired inequalities.
\end{proof}

\subsection{Relaxing the strict convexity}

Suppose now that $W \colon \R^2 \to (0, \infty)$ is a continuous function such that
$W(y) \to \infty$ as $|y| \to \infty$. Then we can clearly
find a sequence of functions $W_k \in C^\infty(\R^2)$, for $k \in \N$, such that
\begin{itemize}
\item
every $W_k$ is bounded,
\item
there exists $c > 0$ such that $W_k \ge c$ in $\R^2$ for every $k \in \N$,
\item
$W_k \le W_\ell$ when $k \le \ell$, and
\item
$W(y) = \lim_{k \to \infty} W_k(y)$ for every $y \in \R^2$.
\end{itemize}

Recall that
\[
f(M) = \frac{1}{2} \left(|M|^2 - \frac{1}{2} (\tr M)^2 + |m_{12} - m_{21}| \sqrt{|M|^2 - 2\det M}\right)^2.
\]
In the coordinates
\begin{align*}
q_1 & = \frac{1}{\sqrt{2}} (m_{11} + m_{22}), & q_2 & = \frac{1}{\sqrt{2}} (m_{11} - m_{22}), \\
q_3 & = \frac{1}{\sqrt{2}} (m_{12} + m_{21}), & q_4 & = \frac{1}{\sqrt{2}} (m_{12} - m_{21}),
\end{align*}
we can write
\[
f(q) = \frac{1}{2}\left(|q_4| + \sqrt{q_2^2 + q_3^2}\right)^2.
\]
We now consider the regularisation
\[
f_k(q) = \frac{1}{2}\left(\sqrt{q_4^2 + \frac{|q|^2}{2k}} + \sqrt{q_2^2 + q_3^2 + \frac{|q|^2}{2k}}\right)^2.
\]
In the original coordinates, this is
\begin{multline*}
f_k(M) = \frac{1}{2} \Biggl(\left(1 + \frac{1}{k}\right) |M|^2 - \frac{1}{2} (\tr M)^2 \\
+ \left((m_{12} - m_{21})^2 + \frac{1}{k} |M|^2\right)^{1/2} \left(\left(1 + \frac{1}{k}\right) |M|^2 - 2 \det M\right)^{1/2}\Biggr).
\end{multline*}
This function is now strictly convex and smooth in
$\R^{2 \times 2} \setminus \{0\}$. Of course, it is still homogeneous of
degree $2$. We further note that $f_k \ge f$ and $f(M) = \lim_{k \to \infty} f_k(M)$
for every $M \in \R^{2 \times 2}$, and this convergence is monotone.
Define
\[
F_k(y, M) = \frac{f_k(y)}{W_k(y)}
\]
These functions then satisfy the assumptions from Section \ref{sct:L-infinity}.

We also need to consider the Legendre transforms.

\begin{lemma}
The Legendre transform of $F$ with respect to the second argument is
\[
F^*(y, N) = \begin{cases}
\frac{1}{4} W(y) \max\{|N|^2 - 2 \det N, (n_{12} - n_{21})^2\} & \text{if $\tr N = 0$}, \\
\infty & \text{else}.
\end{cases}
\]
\end{lemma}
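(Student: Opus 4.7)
The idea is to diagonalise the supremum defining $F^*$ in the orthonormal coordinates $(q_1, q_2, q_3, q_4)$ on $\R^{2 \times 2}$ introduced just before the lemma (and the matching coordinates $(p_1, p_2, p_3, p_4)$ coming from $N$). Because the change of variables is orthonormal, $M : N = q_1 p_1 + q_2 p_2 + q_3 p_3 + q_4 p_4$, while the identity $f(M) = g(M - \tfrac{1}{2}(\tr M) I)$ shows that $f$ is independent of $q_1$ and equals $\tfrac{1}{2}(|q_4| + \sqrt{q_2^2 + q_3^2})^2$. Writing $F(y, M) = f(M) / W(y)$, the supremum defining $F^*(y, N)$ then splits in a convenient way.

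First I would dispose of the case $\tr N \neq 0$: when $p_1 \neq 0$, the bracket $M : N - F(y, M)$ is affine and non-constant in $q_1$ while $F(y, M)$ does not depend on $q_1$, so sending $q_1 \to \pm\infty$ gives $F^*(y, N) = \infty$, which matches the second branch of the claimed formula.

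Assume now $\tr N = 0$, i.e.\ $p_1 = 0$; set $q_1 = 0$. Maximising first in the angle of $(q_2, q_3)$ and in the sign of $q_4$ aligns $(q_2, q_3)$ with $(p_2, p_3)$ and gives $q_4 p_4 = |q_4| |p_4|$. Writing $r = \sqrt{q_2^2 + q_3^2}$, $s = |q_4|$, $\alpha = \sqrt{p_2^2 + p_3^2}$, $\beta = |p_4|$, this reduces the problem to
\[
F^*(y, N) = \sup_{r, s \ge 0} \left( \alpha r + \beta s - \frac{(r + s)^2}{2 W(y)} \right).
\]
Substituting $t = r + s$ and maximising the linear part $\alpha r + \beta (t - r)$ over $r \in [0, t]$ yields $\max(\alpha, \beta) \, t$, after which a one-variable quadratic maximisation in $t \ge 0$ produces $F^*(y, N) = \tfrac{W(y)}{2} \max(\alpha^2, \beta^2)$.

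Finally I would translate back to the original coordinates via the direct calculations $p_2^2 + p_3^2 = \tfrac{1}{2} (|N|^2 - 2 \det N)$ and $p_4^2 = \tfrac{1}{2} (n_{12} - n_{21})^2$ (the same computation as for the $q$-coordinates); substituting yields precisely $\tfrac{1}{4} W(y) \max\{|N|^2 - 2 \det N,\, (n_{12} - n_{21})^2\}$. There is no real obstacle here; the only mild care is to perform the two-stage maximisation (direction and sign first, then the two radii) in the right order, using that $\alpha, \beta \ge 0$.
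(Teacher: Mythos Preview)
Your argument is correct. It shares the paper's setup --- the orthonormal $(q_1,q_2,q_3,q_4)$ coordinates, the observation that $f$ is independent of $q_1$, and the immediate consequence that $F^*(y,N)=\infty$ when $\tr N\neq 0$ --- but from that point on the two proofs diverge.

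The paper proceeds by a case analysis on where the supremum defining $f^*$ is attained: either $q_4=0$, or $q_2=q_3=0$, or $f$ is differentiable at the maximiser and the first-order conditions hold. The first two cases give $\tfrac12(p_2^2+p_3^2)$ and $\tfrac12 p_4^2$, and the third is shown to occur only on the double cone $p_2^2+p_3^2=p_4^2$. The paper then invokes convexity of $f^*$ together with the lower bound $f^*(p)\ge\tfrac12\max\{p_2^2+p_3^2,p_4^2\}$ to conclude that the maximum is the only possibility.

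Your approach is more direct: after the angular reduction to radial variables $r,s\ge 0$, you exploit the fact that the nonlinear term depends only on $r+s$ by substituting $t=r+s$, which turns the inner optimisation into a linear problem on $[0,t]$ with value $\max(\alpha,\beta)\,t$, followed by a one-variable quadratic in $t$. This sidesteps both the case split and the final convexity argument. The paper's route makes the geometry of the Legendre transform (critical points versus boundary) more visible, while yours is shorter and entirely elementary.
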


\begin{proof}
We can work in the coordinates $q$ given above, as the transformation amounts
to an isometry between $\R^{2 \times 2}$ and $\R^4$. Moreover, it suffices
to consider
\[
f(q) = \frac{1}{2}\left(|q_4| + \sqrt{q_2^2 + q_3^2}\right)^2
\]
and its Legendre transform
\[
f^*(p) = \sup_{q \in \R^4} \bigl(p \cdot q - f(q)\bigr).
\]
It is clear that $f^*(p) = \infty$ if $p_1 \neq 0$, as $f$ does not depend
on $q_1$. Now assume that $p_1 = 0$. Then the supremum is attained at a point
$q = (0, q_2, q_3, q_4) \in \R^4$ such that either
\begin{itemize}
\item $q_4 = 0$, or
\item $q_2 = q_3 = 0$, or
\item $f$ is differentiable at $q$ and
\begin{align*}
p_2 & = \dd{f}{q_2}(q) = \left(|q_4| + \sqrt{q_2^2 + q_3^2}\right) \frac{q_2}{\sqrt{q_2^2 + q_3^2}}, \\
p_3 & = \dd{f}{q_3}(q) = \left(|q_4| + \sqrt{q_2^2 + q_3^2}\right) \frac{q_3}{\sqrt{q_2^2 + q_3^2}}, \\
p_4 & = \dd{f}{q_4}(q) = \left(|q_4| + \sqrt{q_2^2 + q_3^2}\right) \frac{q_4}{|q_4|}.
\end{align*}
\end{itemize}

In the first case, we find that
\[
f^*(p) = \sup_{q_2, q_3 \in \R} \left(p_2 q_2 + p_3 q_3 - \frac{q_2^2 + q_3^2}{2}\right) = \frac{p_2^2 + p_3^2}{2}.
\]
Similarly, in the second case,
\[
f^*(p) = \sup_{q_4 \in \R} \left(p_4 q_4 - \frac{q_4^2}{2}\right) = \frac{p_4^2}{2}.
\]
In any case, $f^*(p)$ will be at least as large as either of these expressions,
so
\begin{equation} \label{eq:f^*}
f^*(p) \ge \frac{1}{2} \max\{p_2^2 + p_3^2, p_4^2\}
\end{equation}
for every $p \in \R^4$. Finally, in the third of the above cases, we conclude
that $p_2^2 + p_3^2 = p_4^2$. Hence this case occurs only for points on this
double cone.

To summarise, $f^*$ is a convex function that satisfies \eqref{eq:f^*} and
such that $f^*(p) = (p_2^2 + p_3^2)/2$ or $f^*(p) = p_4^2/2$
whenever $p_2^2 + p_3^2 \neq p_4^2$. There exists only one function with
these properties, which is
\[
f^*(p) = \frac{1}{2} \max\{p_2^2 + p_3^2, p_4^2\}.
\]
In terms of the original coordinates, we then have the expression
\[
f^*(N) = \frac{1}{4} \max\{|N|^2 - 2 \det N, (n_{12} - n_{21})^2\},
\]
and the claim follows.
\end{proof}

We do not need to compute the Legendre transforms of $F_k$ explicitly,
but we note that they are convex and homogeneous of degree $2$ in the second argument.
Furthermore,
\[
\begin{split}
F^*(y, N) & = \sup_{M \in \R^{2 \times 2}} \left(M : N - \inf_{k \in \N} F_k(y, M)\right) \\
& = \sup_{k \in \N} \sup_{M \in \R^{2 \times 2}} \bigl(M : N - F_k(y, N)\bigr) \\
& = \sup_{k \in \N} F_k^*(y, N)
\end{split}
\]
for any $y \in \R^2$ and $N \in \R^{2 \times 2}$.

We now want to prove the following.

\begin{proposition} \label{prp:existence-of-calibration-and-current}
Suppose that $W \in C^0(\R^2; (0, \infty))$ satisfies $\lim_{|y| \to \infty} W(y) = \infty$.
Then there exist $\Phi \in \bigcap_{p < \infty} W_\loc^{1, p}(\R^2; \R^2)$ and $T \in \ncrt_{2 \times 2}^0$
such that
\begin{enumerate}
\item $f(D\Phi) \le W$ almost everywhere,
\item $\M_F(T) \le \M_F(S)$ for any $S \in \ncrt_{2 \times 2}^0$, and
\item $\partial T(\Phi) \ge 2\M_F(T)$.
\end{enumerate}
\end{proposition}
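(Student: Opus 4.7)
The plan is to apply Proposition \ref{prp:existence} and Theorem \ref{thm:current} to the regularised functions $F_k(y, M) = f_k(M)/W_k(y)$ and then pass to the limit $k \to \infty$. For each fixed $k$, $F_k$ satisfies the hypotheses of Section \ref{sct:L-infinity}: it is $C^1$ on $\R^2 \times \R^{2 \times 2}$, degree-$2$ homogeneous in $M$ and $C^2$ for $M \ne 0$; $W_k$ is bounded between two positive constants; and $f_k$ is strictly convex and degree-$2$ homogeneous, so that $D^2 f_k$ attains a positive minimum on the unit sphere and is therefore uniformly strictly convex on $\R^{2 \times 2}$. With $n = m = 2$, $A = \{a^-, a^+\}$, $m_0 = 1$ and the non-constant boundary datum $\phi_1^0(a^-) = 0$, $\phi_1^0(a^+) = 1$, I obtain an $E_\infty$-minimiser $\phi_k$, with $e_k \coloneqq E_\infty(\phi_k) > 0$, and a current $T_k \in \ncrt_{2 \times 2}(\R^2)$ with $\supp \partial T_k \subseteq A$, $\partial T_{k, 2} = 0$, that is $\M_{F_k}$-minimal among currents with boundary $\partial T_k$ and satisfies $\partial T_k(\phi_k) = 2 e_k \M_{F_k}(T_k)$.

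The next step is to rescale so as to land in $\ncrt_{2 \times 2}^0$. Since $\partial T_k$ kills constants, $\partial T_{k, 1} = \alpha_k (\delta_{a^+} - \delta_{a^-})$ for some $\alpha_k \in \R$, and pairing with $\phi_k$ gives $\alpha_k = 2 e_k \M_{F_k}(T_k) > 0$. Setting $\tilde T_k = \alpha_k^{-1} T_k$ and $\Phi_k = e_k^{-1} \phi_k$, the degree-$2$ homogeneity of $f_k$ converts $F_k(\blank, D\phi_k) \le e_k^2$ into $f_k(D \Phi_k) \le W_k \le W$, while $1$-homogeneity of $\M_{F_k}$ together with the minimality of $T_k$ makes $\tilde T_k$ a minimiser of $\M_{F_k}$ in $\ncrt_{2 \times 2}^0$, with $\partial \tilde T_k(\Phi_k) = \Phi_{k, 1}(a^+) - \Phi_{k, 1}(a^-) = 2 \M_{F_k}(\tilde T_k)$.

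The key structural observation for the passage $k \to \infty$ is that $f_k(M - b I) = f_k(M)$ for every $b \in \R$, because $|M|^2 - \frac{1}{2}(\tr M)^2$ and $|M|^2 - 2 \det M$ are both invariant under $M \mapsto M - b I$. Combined with $a_1^- = a_1^+$ from the Section~1.2 normalisation, this shows that subtracting an affine function $y \mapsto b_k y$ from $\Phi_k$, with $b_k \in \R$ supplied by Lemma \ref{lem:Korn} applied with $h(R) = (\fint_{B_R(0)} W^{p/2}\, dy)^{1/p}$, preserves both the bound $f_k(D \Phi_k) \le W$ and the jump $\Phi_{k, 1}(a^+) - \Phi_{k, 1}(a^-)$, while making $\{\Phi_k\}$ uniformly bounded in $W^{1, p}(B_R(0); \R^2)$ for every $p < \infty$ and every $R$. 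A diagonal subsequence then gives $\Phi_k \to \Phi$ locally uniformly and $D \Phi_k \rightharpoonup D \Phi$ in $L_\loc^p$. On the current side, the uniform upper bound $\M_{F_k}(\tilde T_k) \le \M_{F_k}(T^0) \le \M_F(T^0) < \infty$ combined with the uniform-in-$k$ lower bound $F_k^*(y, N) \ge (c/(4 C_1)) |N|^2$ (coming from $W_k \ge c > 0$ and $f_k^* \ge f_1^* \ge |N|^2/(4 C_1)$) produces uniform global mass bounds on $\tilde T_k$, and weak* compactness of normal currents, together with the fact that $\partial \tilde T_k$ is fixed, yields $\tilde T_k \rightharpoonup T \in \ncrt_{2 \times 2}^0$.

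All three conclusions now follow from the monotone passage $F_k^* = W_k f_k^* \nearrow W f^* = F^*$. For (i), for each fixed $j$ the inequality $f_j(D \Phi_k) \le f_k(D \Phi_k) \le W$ (for $k \ge j$) together with the convexity of $f_j$ and weak lower semicontinuity gives $f_j(D \Phi) \le W$ almost everywhere, and letting $j \to \infty$ with $f_j \searrow f$ yields $f(D \Phi) \le W$. For (ii), on one hand $\M_{F_k}(\tilde T_k) \le \M_{F_k}(S) \le \M_F(S)$ for every $S \in \ncrt_{2 \times 2}^0$ since $F_k^* \le F^*$, while on the other hand the weak* lower semicontinuity of each $\M_{F_j}$ (a supremum of continuous linear functionals) and the monotonicity $F_j^* \le F_k^*$ for $j \le k$ give
\[
\M_{F_j}(T) \le \liminf_{k \to \infty} \M_{F_j}(\tilde T_k) \le \liminf_{k \to \infty} \M_{F_k}(\tilde T_k),
\]
and sending $j \to \infty$ by monotone convergence upgrades this to $\M_F(T) \le \liminf_k \M_{F_k}(\tilde T_k) \le \M_F(S)$. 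Choosing $S = T$ forces $\lim_k \M_{F_k}(\tilde T_k) = \M_F(T)$, and then
\[
\partial T(\Phi) = \Phi_1(a^+) - \Phi_1(a^-) = \lim_{k \to \infty} \bigl(\Phi_{k, 1}(a^+) - \Phi_{k, 1}(a^-)\bigr) = \lim_{k \to \infty} 2 \M_{F_k}(\tilde T_k) = 2 \M_F(T),
\]
yielding (iii) as an equality. The main technical difficulty is precisely this two-step sandwich — weak lower semicontinuity of each $\M_{F_j}$ in the current variable versus monotone convergence of the Legendre transforms $F_k^*$ in $k$ — required because the convergences $f_k \to f$ and $W_k \to W$ are not uniform.
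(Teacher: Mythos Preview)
Your overall strategy---apply Proposition~\ref{prp:existence} and Theorem~\ref{thm:current} to the regularised $F_k$, rescale into $\ncrt_{2\times 2}^0$, extract weak limits using Lemma~\ref{lem:Korn} for $\Phi$ and mass bounds for $T$, then pass to the limit via the monotone sandwich $F_j^* \le F_k^* \le F^*$---is exactly the paper's approach, and your arguments for (ii) and (iii) are correct (indeed you obtain equality in (iii), which the paper's proof also yields implicitly).

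There is, however, a genuine error in your ``key structural observation'': the claim $f_k(M-bI)=f_k(M)$ is false. The two quantities you cite, $|M|^2-\tfrac12(\tr M)^2$ and $|M|^2-2\det M$, are indeed invariant under $M\mapsto M-bI$, and together with $(m_{12}-m_{21})^2$ they are precisely the ingredients of $f$; but the regularisation $f_k$ contains additional terms $\tfrac{1}{2k}|M|^2$ (equivalently, in the $q$-coordinates, $\tfrac{1}{2k}|q|^2$ with $q_1=\tfrac{1}{\sqrt2}\tr M$), and $|M|^2$ by itself is \emph{not} invariant under this shift. Consequently, after replacing $\Phi_k$ by $\hat\Phi_k=\Phi_k-b_k\,\mathrm{id}$ you no longer have $f_k(D\hat\Phi_k)\le W$, and your chain $f_j(D\hat\Phi_k)\le f_k(D\hat\Phi_k)\le W$ in the proof of (i) breaks.

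The repair is simple and is what the paper does: use the invariance of $f$ rather than $f_k$. From $f(D\hat\Phi_k)=f(D\Phi_k)\le f_k(D\Phi_k)\le W_k\le W$ and the convexity of $f$, the set $\{\Psi\in W^{1,p}(B_R(0);\R^2): f(D\Psi)\le W \text{ a.e.}\}$ is convex and strongly closed, hence weakly closed, and $f(D\Phi)\le W$ follows directly for the weak limit. The detour through $f_j$ is unnecessary.
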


\begin{proof}
We define the functions $F_k$ as explained above. For any fixed $k \in \N$, we consider the functional
\[
E_\infty^k(\Phi) = \esssup_{y \in \R^2} \sqrt{F_k(y, D\Phi(y))}.
\]
By Proposition \ref{prp:existence}, there exists a minimiser
$\tilde{\Phi}_k \in W_\loc^{1, \infty}(\R^2; \R^2)$ of $E_\infty^k$
subject to the conditions $\Phi_1(a^-) = 0$ and $\Phi_1(a^+) = 1$. Set
\[
\Phi_k = \frac{\tilde{\Phi}_k}{E_\infty^k(\tilde{\Phi}_k)}.
\]
Then $F_k(y, D\Phi_k) \le 1$, i.e., $f_k(D\Phi_k) \le W_k$, almost everywhere by construction.

Theorem \ref{thm:current} gives rise to a non-trivial current $T_k \in \ncrt_{2 \times 2}(\R^2)$
for every $k \in \N$ with $\supp \partial T_{k 1} \subseteq \{a^\pm\}$ and $\partial T_{k 2} = 0$,
which minimises $\M_{F_k}$ for its boundary data and satisfies
$\partial T_k(\Phi_k) = 2 \M_{F_k}(T_k)$. Because all of these
properties are invariant under multiplication with a positive constant, we can renormalise this
current such that $T_k \in \ncrt_{2 \times 2}^0$.

Next we study the limit as $k \to \infty$.
For any $R > 0$, the functions $W_k \le W$ are uniformly bounded in
$B_R(0)$ by the continuity of $W$. Hence
\[
\sup_{k \in \N} \sup_{y \in B_R(0)} f(D\Phi_k) < \infty.
\]
Lemma \ref{lem:Korn} implies that there
exist $b_k \in \R$ such that the functions
\[
\hat{\Phi}_k(y) = \Phi_k(y) - b_k y
\]
are uniformly bounded in $W^{1, p}(B_R(0); \R^2)$ for all $p < \infty$ and all
$R > 0$. Therefore, we may assume that we have weak convergence of $\hat{\Phi}_k$ in
$W_\loc^{1, p}(\R^2; \R^2)$ for any $p < \infty$ to some limit
$\Phi \colon \R^2 \to \R^2$. Since the set
\[
\set{\Psi \in W^{1, p}(B_R(0); \R^2)}{F(x, D\Psi) \le 1 \text{ almost everywhere}}
\]
is convex and closed in $W^{1, p}(B_R(0); \R^2)$, and every $\hat{\Phi}_k$
belongs to this set, it follows that $F(y, D\Phi) \le 1$ almost everywhere.

Because $\partial T_{k 1}$ is supported on $\{a^\pm\}$, the functions
$\hat{\Phi}_k$ still satisfy the condition
$\partial T_k(\hat{\Phi}_k) = 2 \M_{F_k}(T_k)$.

We can estimate $f_k(M) \le 4|M|^2$ for all $M \in \R^{2 \times 2}$ and all $k \in \N$.
Hence $F_k^*(y, N) \ge \frac{1}{16} W(y) |N|^2$ for all $y \in \R^2$
and $N \in \R^{2 \times 2}$. Since $W$ is bounded below under the above assumptions,
it follows that
\[
\|T_k\|(\R^2) \le C\int_{\R^2} \sqrt{F_k^*(y, \vec{T}_k)} \, d\|T_k\| = C\M_{F_k}(T_k) = \frac{C}{2} \partial T_k(\hat{\Phi}_k)
\]
for some constant $C > 0$ that is independent of $k$, and the right-hand side is obviously
bounded. Hence we may assume that $T_k$ converges
weakly* in the dual space of $C_0^0(\R^2; \R^{2 \times 2})$ to some
limit $T$, which will automatically belong to $\ncrt_{2 \times 2}^0$.
From the definition of the $F$-mass in Definition \ref{def:current}, it follows easily that $\M_{F_\ell}$
is lower semicontinuous with respect to such convergence for any
fixed $\ell \in \N$. Thus
\[
\M_{F_\ell}(T) \le \liminf_{k \to \infty} \M_{F_\ell}(T_k) \le \liminf_{k \to \infty} \M_{F_k}(T_k).
\]
Moreover, Beppo Levi's monotone convergence theorem gives
\[
\M_F(T) = \int_{\R^2} \sqrt{F^*(y, \vec{T})} \, d\|T\| = \lim_{\ell \to \infty} \int_{\R^2} \sqrt{F_\ell^*(y, \vec{T})} \, d\|T\| = \lim_{\ell \to \infty} \M_{F_\ell}(T).
\]
Therefore,
\[
\M_F(T) \le \liminf_{k \to \infty} \M_{F_k}(T_k).
\]

Recall that the currents $T_k$ all have the same boundary by construction. Since $\hat{\Phi}_k \to \Phi$ locally uniformly, it follows that
\[
\partial T(\Phi) = \lim_{k \to \infty} \partial T_k(\hat{\Phi}_k) = 2\lim_{k \to \infty} \M_{F_k}(T_k) \ge 2\M_F(T).
\]

It remains to prove that $T$ minimises the $F$-mass in $\ncrt_{2 \times 2}^0$.
Let $S \in \ncrt_{2 \times 2}^0$. Then $\partial S = \partial T_k$ for every $k \in \N$, and we
know that $\M_{F_k}(T_k) \le \M_{F_k}(S)$. As above, we see that
\[
\M_F(T) \le \liminf_{k \to \infty} \M_{F_k}(T_k) \le \liminf_{k \to \infty} \M_{F_k}(S) = \M_F(S).
\]
This finally concludes the proof.
\end{proof}

\subsection{Potentials with zeroes}

We now want to remove the assumption that $W$ is positive. While we do not obtain
a specific current with the properties of Proposition \ref{prp:existence-of-calibration-and-current}
in this case, we can still prove the following.

\begin{theorem} \label{thm:existence-of-calibration}
Let $W \colon \R^2 \to [0, \infty)$ be a continuous function. Then there exists
$\Phi \in \bigcap_{p < \infty} W_\loc^{1, p}(\R^2; \R^2)$ such that $f(D\Phi) \le W$ almost everywhere
and
\[
\Phi_1(a^+) - \Phi_1(a^-) \ge 2 \inf_{T \in \ncrt_{2 \times 2}^0} M_F(T).
\]
\end{theorem}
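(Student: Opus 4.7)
My plan is to derive Theorem \ref{thm:existence-of-calibration} from Proposition \ref{prp:existence-of-calibration-and-current} via a straightforward approximation of $W$ by positive, coercive potentials. I would set $W_k(y) = W(y) + \frac{1}{k}(1 + |y|^2)$, so that each $W_k$ is continuous, bounded below by $1/k$, tends to $+\infty$ at infinity, and decreases to $W$ locally uniformly. Proposition \ref{prp:existence-of-calibration-and-current} applied to $W_k$ produces $\Phi_k \in \bigcap_{p < \infty} W_\loc^{1,p}(\R^2; \R^2)$ with $f(D\Phi_k) \le W_k$ almost everywhere and $T_k \in \ncrt_{2 \times 2}^0$ minimising $\M_{F_k}$ (with $F_k = f/W_k$), together with the key inequality
\[
\Phi_{k,1}(a^+) - \Phi_{k,1}(a^-) = \partial T_k(\Phi_k) \ge 2\M_{F_k}(T_k).
\]
Since $W_k \ge W$ gives $F_k \le F$, taking Legendre transforms reverses the inequality, so $F_k^* \ge F^*$ pointwise and $\M_{F_k}(T_k) \ge \M_F(T_k) \ge \inf_{T \in \ncrt_{2 \times 2}^0} \M_F(T)$ for every $k$.

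The next step is to extract a limit of $\Phi_k$. Since a direct calculation yields $f(M + bI) = f(M)$ for every $b \in \R$, Lemma \ref{lem:Korn} supplies constants $b_k \in \R$ such that the renormalised functions
\[
\hat\Phi_k(y) = \Phi_k(y) - \Phi_k(0) - b_k y
\]
are uniformly bounded in $W^{1,p}(B_R(0); \R^2)$ for every $R \ge 1$ and every $p \in (2, \infty)$; the only uniform input needed is that $W_k$ is bounded on $B_R(0)$ independently of $k$, which is immediate. A diagonal argument yields a subsequence with $\hat\Phi_k \rightharpoonup \Phi$ in $W_\loc^{1,p}(\R^2; \R^2)$ for all $p < \infty$, and hence also locally uniformly by Morrey's inequality. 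Because $a_1^+ = a_1^-$, neither the subtraction of $b_k y$ nor the constant $\Phi_k(0)$ affects the first-component difference at the jump points, so
\[
\hat\Phi_{k,1}(a^+) - \hat\Phi_{k,1}(a^-) = \Phi_{k,1}(a^+) - \Phi_{k,1}(a^-),
\]
and the same translation invariance gives $f(D\hat\Phi_k) = f(D\Phi_k) \le W_k$.

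It remains to verify both conclusions for $\Phi$. The weak lower semicontinuity of $\Psi \mapsto \int_{\R^2} \phi f(D\Psi) \, dy$ (valid for any nonnegative $\phi \in C_0^0(\R^2)$ by convexity and at most quadratic growth of $f$), together with the local uniform convergence $W_k \to W$, yields
\[
\int \phi f(D\Phi) \, dy \le \liminf_{k \to \infty} \int \phi f(D\hat\Phi_k) \, dy \le \liminf_{k \to \infty} \int \phi W_k \, dy = \int \phi W \, dy.
\]
Varying $\phi$ gives $f(D\Phi) \le W$ almost everywhere. The pointwise convergence of $\hat\Phi_k$ at $a^\pm$ then gives
\[
\Phi_1(a^+) - \Phi_1(a^-) = \lim_{k \to \infty} \bigl(\hat\Phi_{k,1}(a^+) - \hat\Phi_{k,1}(a^-)\bigr) \ge 2\inf_{T \in \ncrt_{2 \times 2}^0} \M_F(T),
\]
as required.

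The main obstacle is compactness: $f$ controls only the trace-free part of $D\Phi_k$, so a priori the trace $\div \Phi_k$ could drift unboundedly in $k$ and prevent extraction of a useful limit, or cause $\Phi_{k,1}(a^\pm)$ to blow up. The invariance $f(M + bI) = f(M)$ combined with Lemma \ref{lem:Korn} and the hypothesis $a_1^+ = a_1^-$ is precisely what kills this free direction without perturbing the quantity $\Phi_1(a^+) - \Phi_1(a^-)$ we are trying to estimate; the order-reversing property of the Legendre transform and the lower semicontinuity step are then routine.
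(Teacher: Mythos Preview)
Your proposal is correct and follows essentially the same approach as the paper: approximate $W$ by the coercive potentials $W_k(y) = W(y) + \tfrac{1}{k}(1+|y|^2)$, invoke Proposition~\ref{prp:existence-of-calibration-and-current}, use $F_k^* \ge F^*$ to compare masses, renormalise via Lemma~\ref{lem:Korn} (exploiting $a_1^+ = a_1^-$), and pass to a weak limit. You are in fact more explicit than the paper about why the renormalisation $\Phi_k \mapsto \Phi_k - \Phi_k(0) - b_k y$ leaves $\Phi_{k,1}(a^+)-\Phi_{k,1}(a^-)$ unchanged and about the lower-semicontinuity step; the paper simply defers these points to ``the same arguments as in the proof of Proposition~\ref{prp:existence-of-calibration-and-current}''.
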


\begin{proof}
For $k \in \N$, define $W_k(y) = W(y) + \frac{1}{k}(1 + |y|^2)$, and then let
$F_k(y, M) = f(M)/W_k(y)$. Then $\M_{F_k} \ge  \M_F$.

For each $k \in \N$, Proposition \ref{prp:existence-of-calibration-and-current}
provides a function $\Phi_k \colon \R^2 \to \R^2$ such that $f(D\Phi_k) \le W_k$ almost everywhere,
and it also provides a current $T_k \in \ncrt_{2 \times 2}^0$
that minimises $\M_{F_k}$.
Furthermore, Proposition \ref{prp:existence-of-calibration-and-current} tells us that
\[
\Phi_{k1}(a^+) - \Phi_{k1}(a^-) = \partial T_k(\Phi_k) \ge 2 \M_{F_k}(T_k) \ge 2\M_F(T_k) \ge 2 \inf_{T \in \ncrt_{2 \times 2}^0} \M_F(T).
\]

When we let $k \to \infty$, we see with the same arguments as in the proof of
Proposition \ref{prp:existence-of-calibration-and-current} that we can modify each $\Phi_k$
such that it still has the above properties, but such that we have
weak convergence of some subsequence of $(\Phi_k)_{k \in \N}$,
in the space $\bigcap_{p < \infty} W_\loc^{1, p}(\R^2; \R^2)$,
to a limit $\Phi$ that satisfies $f(D\Phi) \le W$ almost everywhere.
Since this also implies locally uniform convergence, it further follows that
\[
\Phi_1(a^+) - \Phi_1(a^-) \ge 2 \inf_{T \in \ncrt_{2 \times 2}^0} M_F(T).
\]
This concludes the proof.
\end{proof}

\subsection{How calibrations give a lower bound}

We expect that calibrations give rise to lower bounds for the energy, and this is indeed the reason why
we consider them. Formal calculations give a good idea of the underlying estimates, but in order to
obtain a rigorous proof, we need some control of the corresponding integrals when $u$ is potentially unbounded.
The purpose of this subsection is to justify the following statement, which
depends on Proposition \ref{prp:calibration}.

Once this is proved, we can proceed to prove Theorem \ref{thm:main} and Corollary \ref{cor:1D}, which we do at the end of the section.

\begin{lemma} \label{lem:calibration-implies-inequality}
Suppose that $W \colon \R^2 \to [0, \infty)$ is H\"older continuous and satisfies the growth condition \eqref{eq:growth-W}.
Suppose that $\Phi \in \bigcap_{p < \infty} W_\loc^{1, p}(\R^2; \R^2)$
satisfies $f(D\Phi) \le W$ almost everywhere. Then
\[
\mathcal{E}(a^-, a^+) \ge \Phi_1(a^-) - \Phi_1(a^+).
\]
\end{lemma}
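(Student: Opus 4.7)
The plan is to integrate the regularised calibration inequality from Proposition \ref{prp:calibration} over a carefully chosen disc $B_{r_\epsilon}(0)$, and then extract the quantity $\Phi_1(a^+) - \Phi_1(a^-)$ from its boundary. I would fix a sequence $(u_\epsilon)_{\epsilon > 0} \in \mathcal{U}(a^-, a^+)$ such that $\liminf_{\epsilon \searrow 0} E_\epsilon(u_\epsilon; B_1(0)) = 2\mathcal{E}(a^-, a^+)$, and choose $\delta = \delta(\epsilon) \to 0$ slowly enough that $\delta(\epsilon)/\epsilon \to 0$. Proposition \ref{prp:calibration} then produces $\Phi_\delta$, $\alpha_\delta$, $W_\delta$, and $\omega_\delta$ satisfying
\[
\div \Phi_\delta(u) + \alpha_\delta(u) \div u \le \frac{\epsilon}{2} |Du|^2 + \frac{1}{2\epsilon} W_\delta(u) - \epsilon \div\bigl((Du)^T \omega_\delta(u)\bigr)^\perp
\]
pointwise for every $u \in C^2(B_1(0); \R^2)$. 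I would first mollify $u_\epsilon$ in the domain to obtain smooth approximants, apply the inequality, integrate over $B_r(0)$, and then pass to the limit in the mollification parameter using the strong convergence in $W^{1, 2}_\loc$.

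Integrating over $B_r(0)$ turns the two divergence terms into boundary integrals on $\partial B_r(0)$. By Fubini and a standard diagonal argument, one can select radii $r_\epsilon \to 1$ along which simultaneously the trace of $u_\epsilon$ converges to $u_0|_{\partial B_{r_\epsilon}}$ in $L^1$, and the circular integrals $\epsilon \int_{\partial B_{r_\epsilon}} |Du_\epsilon|^2 \, d\Ha^1$ and $\epsilon \int_{\partial B_{r_\epsilon}} |\omega_\delta(u_\epsilon)|^2 \, d\Ha^1$ are each controlled by a fixed constant times the corresponding bulk integrals over $B_1(0) \setminus B_{1/2}(0)$. Since $\Phi_\delta \to \Phi$ locally uniformly as $\delta \to 0$, and since $u_0 = a^+$ on $\{x_1 > 0\} \cap B_1(0)$ while $u_0 = a^-$ on $\{x_1 < 0\} \cap B_1(0)$, the elementary identities $\int_{-\pi/2}^{\pi/2} \cos \theta \, d\theta = - \int_{\pi/2}^{3\pi/2} \cos\theta \, d\theta = 2$ and $\int_{-\pi/2}^{\pi/2} \sin\theta \, d\theta = \int_{\pi/2}^{3\pi/2} \sin\theta \, d\theta = 0$ yield
\[
\int_{\partial B_{r_\epsilon}} \Phi_\delta(u_\epsilon) \cdot \nu \, d\Ha^1 \longrightarrow 2\bigl(\Phi_1(a^+) - \Phi_1(a^-)\bigr).
\]

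The remaining terms need to be shown negligible or to converge correctly. The divergence-penalty term is estimated by
\[
\left|\int_{B_{r_\epsilon}} \alpha_\delta(u_\epsilon) \div u_\epsilon \, dx\right| \le \|\alpha_\delta(u_\epsilon)\|_{L^{s'}(B_1(0))} \|\div u_\epsilon\|_{L^s(B_1(0))}.
\]
Lemma \ref{lem:Korn} applied to $\Phi$, combined with $f(D\Phi) \le W$ and the growth \eqref{eq:growth-W}, shows that $|\alpha_\delta(y)| \le C(1 + |y|^{\bar p})$, while $\int_{B_1(0)} |u_\epsilon|^{2\bar p} \le C\int_{B_1(0)} (W(u_\epsilon) + 1) \le C\epsilon E_\epsilon(u_\epsilon) + C$ stays bounded via the lower bound in \eqref{eq:growth-W}; since $s > 2$, the first factor is uniformly bounded and the second is $o(\epsilon^\tau)$ by \eqref{eq:small-divergence}. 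For the main energy term on the right, the Hölder continuity of $W$ together with the uniform $L^{2\bar p}$-bound on $u_\epsilon$ lets us replace $W_\delta$ by $W$ up to a vanishing error (using $\delta(\epsilon)/\epsilon \to 0$), so this contribution tends to $2\mathcal{E}(a^-, a^+)$.

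The main obstacle is the boundary contribution involving $\omega_\delta$, whose coefficient grows like $|y|\log|y|$ by \eqref{eq:growth-omega}. Cauchy--Schwarz gives
\[
\epsilon \left|\int_{\partial B_{r_\epsilon}} \bigl((Du_\epsilon)^T \omega_\delta(u_\epsilon)\bigr)^\perp \cdot \nu \, d\Ha^1\right| \le \left(\epsilon \int_{\partial B_{r_\epsilon}} |Du_\epsilon|^2 d\Ha^1\right)^{1/2} \left(\epsilon \int_{\partial B_{r_\epsilon}} |\omega_\delta(u_\epsilon)|^2 d\Ha^1\right)^{1/2}.
\]
The first factor is bounded by the energy through the Fubini choice of $r_\epsilon$. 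For the second, \eqref{eq:growth-omega} yields $|\omega_\delta(u_\epsilon)|^2 \le C(1 + |u_\epsilon|^2 \log^2(2 + |u_\epsilon|))$, and this quantity, multiplied by $\epsilon$ and integrated over $B_1(0)$, vanishes as $\epsilon \searrow 0$ thanks to the $L^{2\bar p}$-bound on $u_\epsilon$ (provided $\bar p \ge 1$, with a straightforward modification otherwise). Assembling everything produces $\Phi_1(a^+) - \Phi_1(a^-) \le \mathcal{E}(a^-, a^+)$; since $f$ is even, $-\Phi$ also satisfies $f(-D\Phi) \le W$, and applying the same argument to $-\Phi$ delivers the lemma in the stated form.
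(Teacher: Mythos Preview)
Your overall strategy is close to the paper's, but there is a genuine gap in the treatment of $\alpha_\delta$. You assert that Lemma~\ref{lem:Korn} gives $|\alpha_\delta(y)| \le C(1+|y|^{\bar p})$ uniformly in $\delta$. It does not: Korn's inequality only places $D\Phi$ (and hence $\alpha = -\tfrac12\div\Phi$) in $L^q_\loc$ for each $q<\infty$, so after mollification one obtains
\[
|\alpha_\delta(y)| \le C\,\delta^{-2/q}\bigl(|y|^{\bar p + 4/q}+1\bigr),
\]
which blows up as $\delta\searrow 0$. Consequently the product $\|\alpha_\delta(u_\epsilon)\|_{L^{s'}}\|\div u_\epsilon\|_{L^s}$ is not automatically small; one must balance the factor $\delta^{-2/q}$ against the $\epsilon^\tau$ decay of $\div u_\epsilon$, which forces a specific coupling $\delta=\epsilon^\ell$ with $\ell>1$ and $q>2\ell/\tau$. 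A related issue affects your $W_\delta\to W$ step: H\"older continuity with exponent $\gamma$ yields $W_\delta\le W + C\delta^\gamma(1+|y|^{2\bar p})$, so the condition $\delta/\epsilon\to 0$ is insufficient when $\gamma<1$; one needs $\delta^\gamma/\epsilon\to 0$, i.e.\ $\ell>1/\gamma$.

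Beyond this, your boundary-circle approach is workable but rougher than the paper's. The paper avoids traces altogether: it multiplies the calibration inequality by a nonnegative cut-off $\eta\in C_0^\infty(B_1(0))$, integrates by parts, and then lets $\eta$ approximate $\chi_{B_1(0)}$. This converts the divergence terms into bulk integrals against $\nabla\eta$, for which the $L^{2\bar p+2}$ bound on $u_\epsilon$ (obtained via a Modica--Mortola type estimate and the Sobolev inequality, one step stronger than your $L^{2\bar p}$ bound) suffices to pass to the limit by dominated convergence. Your Fubini selection of good radii $r_\epsilon$ can be made rigorous, but you would still need equi-integrability of $\Phi_\delta(u_\epsilon)$ on the circles $\partial B_{r_\epsilon}$, and this again requires the sharper integrability of $u_\epsilon$ together with the polynomial growth bound on $\Phi$ coming from Korn and Morrey.
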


\begin{proof}
For any two constants $b, c \in \R$, the function $\Psi(y) = \Phi(y) + by + c$ satisfies
$f(D\Psi) = f(D\Phi)$ and
\[
\Psi_1(a^-) - \Psi_1(a^+) = \Phi_1(a^-) - \Phi_1(a^+).
\]
Hence we may assume without loss of generality that $\Phi_2(a^+) = \Phi_2(a^-)$ and $\Phi(0) = 0$.
Let $\alpha = - \frac{1}{2} \div \Phi$. Then the inequality $f(D\Phi) \le W$
is equivalent to $g(D\Phi + \alpha I) \le W$ for the function $g$ from Section \ref{sct:differential-inequalities}.

We regularise the calibration $\Phi$ and the potential function $W$ the
same way as in Proposition \ref{prp:calibration}. That is, we define $\Phi_\delta = \rho_\delta * \Phi$ and
$\alpha_\delta = \rho_\delta * \alpha + \delta$, and furthermore,
\[
W_\delta = \frac{\rho_\delta * W}{1 - \delta} + \delta.
\]
According to Proposition \ref{prp:calibration}, there exist vector fields
$\omega_\delta \in C^1(\R^2; \R^2)$ such that
\begin{equation} \label{eq:divergence-inequality}
\div \Phi_\delta(u)  + \alpha_\delta(u) \div u \le \frac{\epsilon}{2} |Du|^2 + \frac{1}{2\epsilon} W_\delta(u) - \epsilon \div \bigl((Du)^\transpose \omega_\delta(u)\bigr)^\perp
\end{equation}
for all $u \in C^2(B_1(0); \R^2)$ and all $\epsilon > 0$, and such that
\[
|\omega_\delta(y)| \le C_1(1 + |y| \log |y|)
\]
for every $y \in \R^2$, where $C_1$ is a constant independent of $\delta$
or $y$.

Inequality \eqref{eq:divergence-inequality}, in its weak form, says that
\begin{multline} \label{eq:divergence-inequality-weak}
\int_{B_1(0)} \bigl(\eta \alpha_\delta(u) \div u - \nabla \eta \cdot \Phi_\delta(u)\bigr) \, dx \le \int_{B_1(0)} \eta \left(\frac{\epsilon}{2} |Du|^2 + \frac{1}{2\epsilon} W_\delta(u)\right) \, dx \\
- \epsilon \int_{B_1(0)} \omega_\delta(u) \cdot (Du \nabla^\perp \eta) \, dx
\end{multline}
for all $u \in C^2(B_1(0); \R^2)$ and all $\eta \in C_0^\infty(B_1(0))$
with $\eta \ge 0$.

Fix $q > 2$. Using Lemma \ref{lem:Korn}, we find a constant $C_2$ (depending on $q$) such that
\begin{equation} \label{eq:L^q-estimate-Phi}
\left(\fint_{B_R(0)} |D\Phi|^q \, dy\right)^{1/q} \le C_2 R^{\bar{p} + 2/q}
\end{equation}
for every $R \ge 1$ and
\begin{equation} \label{eq:pointwise-estimate-Phi}
|\Phi(y)| \le C_2\bigl(|y|^{\bar{p} + 1+ 2/q} + 1\bigr)
\end{equation}
for every $y \in \R^2$. With the help of H\"older's inequality, we then also estimate
\begin{equation} \label{eq:estimate-alpha_delta}
\begin{split}
|\alpha_\delta(y)| & = \left|\delta + \int_{B_\delta(y)} \rho_\delta(y - z) \alpha(z) \, dz\right| \\
& \le \delta + \frac{1}{2} \|\rho_\delta\|_{L^{q/(q - 1)}(\R^2)} \left(\int_{B_{|y| + 1}(0)} |\div \Phi|^q \, dz\right)^{1/q} \\
& \le C_3 \delta^{-2/q} \bigl(|y|^{\bar{p} + 4/q} + 1\bigr),
\end{split}
\end{equation}
for some constant $C_3$, whenever $\delta \in (0, 1]$.

Combining \eqref{eq:L^q-estimate-Phi} with Morrey's inequality, we find a constant $C_4$ such that
\[
|\Phi(y) - \Phi(z)| \le C_4 R^{\bar{p} + 4/q} |y - z|^{1 - 2/q}
\]
for all $y, z \in B_R(0)$. Hence there exists a constant $C_5$ such that
\begin{equation} \label{eq:convergence-rate-Phi_delta}
|\Phi_\delta(y) - \Phi(y)| = \left|\int_{B_\delta(y)} \rho_\delta(y - z) (\Phi(z) - \Phi(y)) \, dz\right| \le C_5 R^{\bar{p} + 4/q} \delta^{1 - 2/q}
\end{equation}
when $y \in B_R(0)$ with $R \ge 1$ and $\delta \le 1$.

Since the functions $\Phi_\delta$ and $\alpha_\delta$ have at most polynomial growth by these
estimates, and we know that the same applies to $W_\delta$ and $\omega_\delta$, a standard
approximation argument now shows that \eqref{eq:divergence-inequality-weak}
holds for all $u \in W^{1, 2}(B_1(0); \R^2)$.

Recall that in the definition of $\mathcal{E}(a^-, a+)$, we consider
$u_0 \colon \R^2 \to \R^2$, defined by
\[
u_0(x) = \begin{cases}
a^+ & \text{if $x_1 > 0$}, \\
a^- & \text{if $x_1 < 0$}.
\end{cases}
\]
The set $\mathcal{U}(a^-, a^+)$ then comprises all families
$(u_\epsilon)_{\epsilon > 0}$ in $W^{1, 2}(B_1(0); \R^2)$ such that
$u_\epsilon \to u_0$ in $L^1(B_1(0); R^2)$ and such that there exist $\tau > 0$ and
$s > 2$ with $\epsilon^{-\tau} \div u_\epsilon \to 0$ in
$L^s(B_1(0))$ as $\epsilon \searrow 0$. Then
\[
\mathcal{E}(a^-, a^+) = \frac{1}{2} \inf\set{\liminf_{\epsilon \searrow 0} E_\epsilon(u_\epsilon; B_1(0))}{(u_\epsilon)_{\epsilon > 0} \in \mathcal{U}(a^-, a^+)}.
\]
We now fix $(u_\epsilon)_{\epsilon > 0}$ from $\mathcal{U}(a^-, a^+)$.
Choose a sequence $\epsilon_k \searrow 0$ such that
\[
\lim_{k \to \infty} E_{\epsilon_k}(u_{\epsilon_k}; B_1(0)) = \liminf_{\epsilon \searrow 0} E_\epsilon(u_\epsilon; B_1(0)).
\]
We may assume that this limit is finite.

By the growth condition \eqref{eq:growth-W}, there exist constants $c, \theta > 0$ such that $W(y) \ge c|y|^{2\bar{p}}$
when $|y| \ge \theta$. For $u \in W^{1, 2}(B_1(0); \R^2)$, we consider $v = \max\{|u|^{\bar{p} + 1}, \theta\}$. Then we
note that
\[
\begin{split}
\int_{B_1(0)} |Dv| \, dx & \le (\bar{p} + 1) \int_{\{|u| > \theta\}} |u|^{\bar{p}} |Du| \, dx \\
& \le (\bar{p} + 1) \int_{\{|u| > \theta\}} \left(\frac{\epsilon}{2} |Du|^2 + \frac{1}{2\epsilon} |u|^{2\bar{p}}\right) \, dx
\end{split}
\]
by Young's inequality. Hence
the functions $\max\{|u_{\epsilon_k}|^{\bar{p} + 1}, \theta\}$ are uniformly bounded in
$W^{1, 1}(B_1(0))$. By the Sobolev inequality, they are also bounded in
$L^2(B_1(0))$. Thus $(u_{\epsilon_k})_{k \in \N}$ is bounded in
$L^{2\bar{p} + 2}(B_1(0); \R^2)$. Since it converges to $u_0$ in $L^1(B_1(0); \R^2)$,
we conclude that this convergence holds in $L^r(B_1(0); \R^2)$ as well for any $r < 2\bar{p} + 2$.

We now fix a number $\ell > 1$ and define $\delta_k = \epsilon_k^\ell$. Using \eqref{eq:pointwise-estimate-Phi}
and \eqref{eq:convergence-rate-Phi_delta}, we see that
\[
\int_{B_1(0)} \nabla \eta \cdot \Phi_{\delta_k}(u_{\epsilon_k}) \, dx \to \int_{B_1(0)} \nabla \eta \cdot \Phi(u_0) \, dx
\]
if $q$ is chosen sufficiently large. Recall that there exist $\tau > 0$ and
$s > 2$ with $\epsilon^{-\tau} \div u_\epsilon \to 0$ in
$L^s(B_1(0))$ as $\epsilon \searrow 0$. Choose
\[
q > \frac{4s}{\bar{p}(s - 2) + 2s - 2}.
\]
Then \eqref{eq:estimate-alpha_delta} implies that
\[
\|\alpha_{\delta_k}(u_{\epsilon_k})\|_{L^{s/(s - 1)}(B_1(0))} \le C_4 \epsilon_k^{-2\ell/q}
\]
for a constant $C_4$ that is independent of $k$. It follows that
\[
\int_{B_1(0)} \eta \alpha_{\delta_k}(u_{\epsilon_k}) \div u_{\epsilon_k} \, dx \to 0
\]
if we also choose $q > 2\ell/\tau$. It is not difficult to see that
\[
\epsilon_k \int_{B_1(0)} \omega_{\delta_k}(u_{\epsilon_k}) \cdot Du_{\epsilon_k} \nabla^\perp \eta \, dx \to 0.
\]

Because we assume that $W$ is locally H\"older continuous, and because we have the
growth condition \eqref{eq:growth-W}, we have a number $\gamma > 0$ and a constant $C_4$ such that
$W_\delta(y) \le W(y) + C_4\delta^\gamma(1 + |y|^{2\bar{p}})$ for any $y \in \R^2$. If we choose $\ell > 1/\gamma$,
then it follows that
\[
\limsup_{k \to \infty} \int_{B_1(0)} \eta \left(\frac{\epsilon_k}{2} |Du_{\epsilon_k}|^2 + \frac{1}{2\epsilon_k}W_{\delta_k}(u_{\epsilon_k})\right) \, dx \le \lim_{k \to \infty} E_{\epsilon_k}(u_{\epsilon_k}; B_1(0)).
\]
Combining all the inequalities, we find that
\[
\mathcal{E}(a^-, a^+) \ge -\frac{1}{2} \int_{B_1(0)} \nabla \eta \cdot \Phi(u_0) \, dx = -\frac{1}{2} \int_{B_1(0)} \dd{\eta}{x_1} \Phi_1(u_0) \, dx.
\]
The integral on the right-hand side is easy to calculate because of the
specific form of $u_0$: we conclude that
\[
\mathcal{E}(a^-, a^+) \ge \frac{1}{2} \bigl(\Phi_1(a^+) - \Phi_1(a^-)\bigr) \int_{-1}^1 \eta(0, x_2) \, dx_2.
\]
If we approximate the characteristic function of $B_1(0)$ with $\eta$,
we therefore obtain the desired inequality.
\end{proof}

We now have all the ingredients for the proof of our main result.

\begin{proof}[Proof of Theorem \ref{thm:main}]
The functional $\M_F$ defined in the introduction is identical to the
$F$-mass defined in Section \ref{sct:regularisation}. Under the assumptions of
Theorem \ref{thm:main}, we can use Theorem \ref{thm:existence-of-calibration} to obtain
a suitable calibration. Lemma \ref{lem:calibration-implies-inequality} then yields the desired inequality.
\end{proof}

\begin{proof}[Proof of Corollary \ref{cor:1D}]
We compute
\[
\M_F(T^0) = \frac{1}{2} \int_{[a^-, a^+]} \sqrt{W} \, d\Ha^1.
\]
If $T^0$ minimises $\M_F$ in $\ncrt_{2 \times 2}^0$, then Theorem \ref{thm:main} therefore
implies that
\[
\mathcal{E}(a^-, a^+) \ge \int_{[a^-, a^+]} \sqrt{W} \, d\Ha^1.
\]
The reverse inequality follows from a standard construction, which can be found, e.g., in
a paper by Ignat and Monteil \cite[Proposition 4.1]{Ignat-Monteil:20}.
\end{proof}

\section{The geometric problem} \label{sct:geometric}

Theorem \ref{thm:existence-of-calibration} suggests that we study the minimisers of
\[
\M_F(T) = \int_{\R^2} \sqrt{F^*(x, \vec{T})} \, d\|T\|
\]
for $T \in \ncrt_{2 \times 2}^0$. This now constitutes a geometric problem, which
is similar in spirit to the problem of finding geodesics. But it is also a
novel problem, because we have to consider \emph{vector-valued} currents,
the components of which interact in non-trivial ways with each other.
This is the problem that we analyse in this section.

First recall that $F^*(x, N) = W(x) f^*(N)$, where
\[
f^*(N) = \begin{cases}
\frac{1}{4} \max\{|N|^2 - 2 \det N, (n_{12} - n_{21})^2\} & \text{if $\tr N = 0$}, \\
\infty & \text{else}.
\end{cases}
\]
As in the introduction, we consider the current $T^0$, defined by
\[
T^0(\zeta) = \int_{[a^-, a^+]} \zeta : \begin{pmatrix} 0 & 1 \\ 0 & 0 \end{pmatrix} \, d\Ha^1
\]
for $\zeta \in C_0^\infty(\R^2; \R^{2 \times 2})$. Above all, we are interested
in conditions that guarantee that $T^0$ minimises the functional.

\subsection{An estimate for $F^*$}

We will estimate $\M_F(T)$ in terms of the first component $T_1$ of $T$.
Assuming that $\M_F(T) < \infty$, we first observe that $\vec{T}$ must be
of the form
\[
\vec{T} = \begin{pmatrix} r & s \\ t & -r \end{pmatrix}
\]
for some $r, s, t \in \R$ almost everywhere. The numbers $r$ and $s$ will
effectively be determined by $T_1$, but this does not apply to $t$.
It turns out, however, that we can estimate
$F^*(x, (\begin{smallmatrix} r & s \\ t & -r \end{smallmatrix}))$ in terms
of the following functions: for $\lambda \in [-1, 1]$, we define
\[
\Theta_\lambda(z) = \begin{cases}
(1 + \lambda) \frac{z_1^2}{|z_2|} + |z_2| & \text{if $z_2 > 0$ and $(1 + \lambda) z_1^2 < (1 - \lambda) z_2^2$}, \\
(1 - \lambda) \frac{z_1^2}{|z_2|} + |z_2| & \text{if $z_2 < 0$ and $(1 - \lambda) z_1^2 < (1 + \lambda) z_2^2$}, \\
2|z_1| \sqrt{1 - \lambda^2} + \lambda z_2 & \text{else}.
\end{cases}
\]
We note that $\Theta_\lambda$ is positive homogeneous of degree $1$ in $z$
and that $\Theta_\lambda(z) \ge |z_2|$ for all $\lambda \in [-1, 1]$
and all $z \in \R^2$. Furthermore, we have the following inequality.

\begin{lemma} \label{lem:f^*-estimate}
For any $\lambda \in [- 1, 1]$ and any $r, s, t \in R$,
\[
\sqrt{f^*\begin{pmatrix} r & s \\ t & - r \end{pmatrix}} \ge \frac{1}{2} \bigl(\Theta_\lambda(r, s) + \lambda t\bigr).
\]
\end{lemma}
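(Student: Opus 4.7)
After squaring and computing
\[
4 f^*\begin{pmatrix} r & s \\ t & -r \end{pmatrix} = \max\{4r^2 + (s+t)^2, (s-t)^2\},
\]
the claim reduces to
\[
\max\{4r^2 + (s+t)^2, (s-t)^2\} \ge (\Theta_\lambda(r,s) + \lambda t)^2,
\]
which I only need to verify when $\Theta_\lambda(r,s) + \lambda t \ge 0$ (otherwise the right-hand side of the original inequality is nonpositive while the left-hand side is nonnegative). My plan is to treat the three branches in the definition of $\Theta_\lambda$ separately, exploiting the symmetry $(r, s, t, \lambda) \mapsto (r, -s, -t, -\lambda)$ (which preserves both $\max\{4r^2 + (s+t)^2, (s-t)^2\}$ and $\Theta_\lambda(r,s) + \lambda t$) to reduce the second branch to the first.

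The ``else'' branch, where $\Theta_\lambda(r, s) = 2|r|\sqrt{1-\lambda^2} + \lambda s$, follows from Cauchy--Schwarz applied to the unit vector $(\sgn(r)\sqrt{1-\lambda^2}, \lambda)$ and $(2r, s+t)$:
\[
\sqrt{4r^2 + (s+t)^2} \ge 2|r|\sqrt{1-\lambda^2} + \lambda(s+t) = \Theta_\lambda(r, s) + \lambda t.
\]

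The real work is in branch~1: $s > 0$ and $(1+\lambda)r^2 < (1-\lambda)s^2$, with $\Theta_\lambda = (1+\lambda)r^2/s + s$. My approach hinges on the observation that the two entries of the max on the left cross at $t_* = -r^2/s$ (which is the unique root of $4r^2 + 4st = 0$), and at this $t_*$ a direct calculation yields $\Theta_\lambda + \lambda t_* = r^2/s + s$, so both pieces of the max and the right-hand side coincide with the common value $(r^2/s + s)^2$. I would therefore split the analysis at $t_*$. On $\{t \ge t_*\}$, where the max equals $4r^2 + (s+t)^2$, I set $F(t) = 4r^2 + (s+t)^2 - (\Theta_\lambda + \lambda t)^2$: this is convex in $t$ (with $F''(t) = 2(1-\lambda^2) \ge 0$), vanishes at $t_*$, and satisfies $F'(t_*) = 2s(1-\lambda) - 2(r^2/s)(1+\lambda) > 0$ by the branch~1 hypothesis, so convexity forces $F \ge 0$ throughout $[t_*, \infty)$. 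On $\{t \le t_*\}$, where the max equals $(s-t)^2$, I would use the factorisation
\[
(s-t)^2 - (\Theta_\lambda + \lambda t)^2 = -(1+\lambda)(r^2/s + t) \cdot \bigl((1+\lambda) r^2/s + 2s - (1-\lambda) t\bigr);
\]
the first factor is nonnegative because $t \le -r^2/s$, while the second is bounded below by $2r^2/s + 2s > 0$ using $-(1-\lambda) t \ge (1-\lambda) r^2/s$, so the product is nonnegative.

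The main obstacle I expect is spotting the critical value $t_* = -r^2/s$ and recognising that the argument must genuinely exploit the max structure, selecting a different piece on each side of $t_*$. A naive attempt to replace $\max\{A,B\}$ by a convex combination $\mu A + (1-\mu) B$ would produce coefficient $1$ (rather than $\lambda^2$) in front of $t^2$ on the right, and hence cannot match the desired bound for $|\lambda| < 1$; the choice of branch must be $t$-dependent and is dictated precisely by the geometry at $t_*$.
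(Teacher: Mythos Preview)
Your proof is correct. The key computations---the identification of the crossing point $t_* = -r^2/s$, the verification that all three quantities coincide there, the convexity of $F$ on $[t_*,\infty)$, the factorisation on $(-\infty,t_*]$, and the symmetry reducing branch~2 to branch~1---all check out, and the edge cases $\lambda = \pm 1$ cause no trouble.

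Your route differs from the paper's. The paper does not split according to the branches of $\Theta_\lambda$, nor does it square. Instead it fixes $r,s$ and regards $\theta(t) = \sqrt{f^*\bigl(\begin{smallmatrix} r & s \\ t & -r \end{smallmatrix}\bigr)} = \frac{1}{2}\max\{\sqrt{4r^2+(s+t)^2},\,|s-t|\}$ as a convex function of $t$; for each $\lambda \in (-1,1)$ it locates the unique $t_\lambda$ at which $\lambda/2$ is a subderivative of $\theta$, then invokes the subgradient inequality $\theta(t) \ge \theta(t_\lambda) + \tfrac{\lambda}{2}(t-t_\lambda)$ and \emph{computes} the right-hand side to be $\tfrac{1}{2}(\Theta_\lambda(r,s) + \lambda t)$. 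The branch structure of $\Theta_\lambda$ emerges from whether the subderivative $\lambda/2$ is attained at the kink $t_* = -r^2/s$ (your branches~1 and~2) or on the smooth piece $\theta_1$ (your ``else'' branch). So the paper's argument explains where $\Theta_\lambda$ comes from, while yours verifies the inequality once $\Theta_\lambda$ is handed to you; the paper's proof is more conceptual and unified, yours more elementary and self-contained, but both pivot on the same point $t_*$.
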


\begin{proof}
We fix $r$ and $s$ and regard the left-hand side of the desired inequality
as a function of $t$. Thus we define
\[
\theta(t) = \sqrt{f^*\begin{pmatrix} r & s \\ t & - r \end{pmatrix}} = \frac{1}{2} \max\{\sqrt{4r^2 + (s + t)^2}, |s - t|\}.
\]
We also consider $\theta_1(t) = \frac{1}{2} \sqrt{4r^2 + (s + t)^2}$ and
$\theta_2(t) = \frac{1}{2} |s - t|$. These are convex functions, and hence
$\theta = \max\{\theta_1, \theta_2\}$ is convex, too. Moreover, we see that
$\theta$ is differentiable at every point with the exception of $t = -r^2/s$
(which is the unique point where $\theta_1(t) = \theta_2(t)$) if $s \neq 0$.

If $s > 0$ and $t > -r^2/s$ or if $s < 0$ and $t < -r^2/s$, then
$\theta_1(t) > \theta_2(t)$, and we compute
\[
\theta'(t) = \theta_1'(t) = \frac{s + t}{2\sqrt{4r^2 + (s + t)^2}}.
\]
If $s > 0$ and $t < -r^2/s$ or if $s < 0$ and $t > -r^2/s$, then
$\theta_1(t) < \theta_2(t)$, and
\[
\theta'(t) = \theta_2'(t) = \frac{t - s}{2|t - s|}.
\]
(If $s = 0$, then $\theta = \theta_1$.)

Now fix $\lambda \in [-1, 1]$. If $-1 < \lambda < 1$, then there exists
a unique point $t_\lambda \in \R$ such that $\lambda/2$ is a subderivative
of $\theta$ at $t_\lambda$. That point is $t_\lambda = -r^2/s$ if
\[
s > 0 \quad \text{and} \quad (1 + \lambda)r^2 \le (1 - \lambda) s^2
\]
or if
\[
s < 0 \quad \text{and} \quad (1 - \lambda) r^2 \le (1 + \lambda) s^2.
\]
Otherwise, it is the unique point where $\theta_1'(t) = \lambda/2$, namely
\[
t_\lambda = - s + \frac{2\lambda |r|}{\sqrt{1 - \lambda^2}}.
\]

We now have the inequality
\[
\theta(t) \ge \theta(t_\lambda) + \frac{\lambda}{2}(t - t_\lambda) = \theta_1(t_\lambda) + \frac{\lambda}{2}(t - t_\lambda).
\]
If we compute the right-hand side, we obtain exactly $\frac{1}{2}(\Theta_\lambda(r, s) + \lambda t)$.

For $\lambda = -1$ and $\lambda = 1$, the inequality now follows by continuity.
\end{proof}

For our subsequent estimates, it will be useful to know more about the structure of
$\Theta_\lambda$. This turns out to be a convex function; in fact, the following is true.

\begin{lemma} \label{lem:convex-envelope}
For $-1 \le \lambda \le 1$, let
\[
H_\lambda(z) = \begin{cases}
(1 + \lambda) \frac{z_1^2}{z_2} + z_2 & \text{if $z_2 > 0$}, \\
-(1 - \lambda) \frac{z_1^2}{z_2} - z_2 & \text{if $z_2 < 0$}, \\
\infty & \text{if $z_1 \neq 0$ and $z_2 = 0$}, \\
0 & \text{if $z = 0$}.
\end{cases}
\]
Then $\Theta_\lambda$ is the convex envelope of $H_\lambda$.
\end{lemma}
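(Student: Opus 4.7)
The plan is to identify $\Theta_\lambda$ with the Legendre biconjugate $H_\lambda^{**}$ of $H_\lambda$ by an explicit calculation, and then invoke Fenchel--Moreau.

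First I would compute $H_\lambda^*(w) = \sup_z(w \cdot z - H_\lambda(z))$ directly. Splitting the supremum according to the sign of $z_2$ and optimizing first over $z_1$ (which is quadratic in each branch), one obtains on the side $z_2 > 0$ the reduction $\sup_{z_1}(w_1 z_1 - (1+\lambda) z_1^2/z_2) = w_1^2 z_2 / (4(1+\lambda))$, and then
\[
\sup_{z_2 > 0} z_2 \Bigl(w_2 - 1 + \tfrac{w_1^2}{4(1+\lambda)}\Bigr),
\]
which is either $0$ or $+\infty$ according to the sign of the bracket. The symmetric calculation for $z_2 < 0$ yields the lower constraint, and the contribution from $z = 0$ is $0$. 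Combining, $H_\lambda^*$ equals the convex-analytic indicator of the compact convex region
\[
K_\lambda = \left\{ w \in \R^2 : -1 + \tfrac{w_1^2}{4(1-\lambda)} \leq w_2 \leq 1 - \tfrac{w_1^2}{4(1+\lambda)} \right\},
\]
which is bounded above and below by two parabolic arcs meeting at the corner points $(\pm 2\sqrt{1-\lambda^2},\lambda)$.

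Next I would identify the support function $H_\lambda^{**}(z) = \sup_{w \in K_\lambda} z \cdot w$ with $\Theta_\lambda(z)$ by a case analysis on where the maximizer of $z \cdot w$ over $K_\lambda$ sits. When it lies on the (relative) interior of the upper arc, Lagrange multipliers give $w_1 = 2(1+\lambda) z_1 / z_2$ and the value reduces to $(1+\lambda)z_1^2/z_2 + z_2$; this configuration is admissible exactly when $|w_1| \leq 2\sqrt{1-\lambda^2}$, which translates to $(1+\lambda) z_1^2 \leq (1-\lambda) z_2^2$ (with $z_2 > 0$). The lower arc gives the second branch of $\Theta_\lambda$ in exactly the same way. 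In all remaining cases the maximum is attained at one of the two corners, which yields $\pm 2 z_1\sqrt{1-\lambda^2} + \lambda z_2$; taking the maximum over the two corners produces $2|z_1|\sqrt{1-\lambda^2} + \lambda z_2$, the "else" formula. These three cases reproduce the three branches in the definition of $\Theta_\lambda$, so $H_\lambda^{**} = \Theta_\lambda$.

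Finally, $H_\lambda$ is proper and lower semi-continuous (the value $+\infty$ on $\{z_2 = 0,\, z_1 \neq 0\}$ gives LSC there, while LSC in the two open half-planes is immediate from the explicit formulas), so Fenchel--Moreau identifies $H_\lambda^{**}$ with the largest LSC convex minorant of $H_\lambda$; since $\Theta_\lambda$ is itself continuous (continuity of the three formulas across the interior boundaries $z_2 = \pm z_1\sqrt{(1\pm\lambda)/(1\mp\lambda)}$ follows from the matching identity $\sqrt{1-\lambda^2} = (1-\lambda)\sqrt{(1+\lambda)/(1-\lambda)}$), this coincides with the convex envelope in the usual sense. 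The main obstacle is the bookkeeping in the case analysis on $K_\lambda$ and the verification that the "admissibility" region for each arc maximizer matches precisely the branch condition in the definition of $\Theta_\lambda$; once these are aligned the remaining computations are elementary.
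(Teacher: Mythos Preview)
Your approach is correct and genuinely different from the paper's. The paper works region by region: it defines the cones $C_\pm$ where $\Theta_\lambda = H_\lambda$, exhibits the two specific linear functions $\phi_\pm(z) = \pm 2z_1\sqrt{1-\lambda^2} + \lambda z_2$, checks they lie below $H_\lambda$ and agree with it on $\partial C_\pm$, and then for points in $C_\pm$ verifies by hand that the tangent linear function to $H_\lambda$ is a global minorant. Your route via Legendre duality is more systematic: computing $H_\lambda^*$ as the indicator of the explicit lens-shaped region $K_\lambda$ bounded by two parabolas is a clean intermediate step, and recovering $\Theta_\lambda$ as the support function of $K_\lambda$ then reduces to locating the maximiser on the boundary. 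This buys a clearer geometric picture (the region $K_\lambda$ with its two corner points $(\pm 2\sqrt{1-\lambda^2},\lambda)$ is never made explicit in the paper) and avoids the somewhat ad hoc verification that the tangent lines $\ell_0$ are global minorants. Two small points: your formulas for $K_\lambda$ tacitly assume $-1<\lambda<1$ (the denominators $4(1\pm\lambda)$ vanish at the endpoints), so the degenerate cases $\lambda=\pm 1$ still need a one-line separate treatment, as the paper does; and in the final step, the continuity of $\Theta_\lambda$ is not by itself what forces $H_\lambda^{**}$ to coincide with the convex envelope --- rather, since $H_\lambda^{**}=\Theta_\lambda$ is finite on all of $\R^2$, the convex envelope also has full domain, and a convex function on $\R^2$ with full domain is automatically continuous, hence equals its lower semicontinuous hull $H_\lambda^{**}$. (In fact the paper defines the convex envelope as the supremum of linear minorants, which \emph{is} the biconjugate, so no further argument is needed.)
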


\begin{proof}
Suppose first that $-1 < \lambda < 1$. Consider the sets
\begin{align*}
C_+ & = \set{z \in \R^2}{z_2 > 0 \text{ and }(1 + \lambda) z_1^2 < (1 - \lambda) z_2^2}, \\
C_- & = \set{z \in \R^2}{z_2 < 0 \text{ and } (1 - \lambda) z_1^2 < (1 + \lambda) z_2^2},
\end{align*}
and $D = \R^2 \setminus (C_+ \cup C_-)$.

If $z_2 > 0$, then we observe that
\[
0 \le (1 + \lambda) z_2 \left(\frac{|z_1|}{z_2} - \sqrt{\frac{1 - \lambda}{1 + \lambda}}\right)^2 = (1 + \lambda) \frac{z_1^2}{z_2} - 2|z_1| \sqrt{1 - \lambda^2} + (1 - \lambda)z_2.
\]
From this, we conclude that
$H_\lambda(z) \ge 2|z_1|\sqrt{1 - \lambda^2} + \lambda z_2$ when $z_2 > 0$,
with equality on $\partial C_+$.
Similarly, we show that
$H_\lambda(z) \ge 2|z_1|\sqrt{1 - \lambda^2} + \lambda z_2$ when $z_2 < 0$,
with equality on $\partial C_-$.

Let
\[
L = \set{\ell \colon \R^2 \to \R}{\ell \text{ is linear with } \ell \le H_\lambda \text{ in } \R^2},
\]
and let $\check{H}_\lambda(z) = \sup_{\ell \in L} \ell(z)$
denote the convex envelope of $H_\lambda$. (Note that it suffices to consider
linear rather than affine functions, because $H_\lambda$ is positive homogeneous of degree $1$.)
Then the above observations imply that $\phi_+(z) = 2z_1 \sqrt{1 - \lambda^2} + \lambda z_2$ and $\phi_-(z) = -2z_1 \sqrt{1 - \lambda^2} + \lambda z_2$
belong to $L$. It follows that $\check{H}_\lambda(z) \ge 2|z_1|\sqrt{1 - \lambda^2} + \lambda z_2$ for all $z \in \R^2$.
Since $H_\lambda(z) = \phi_+(z)$ when $z \in \partial C_{\pm}$ and $z_1 \ge 0$,
it also follows that $\check{H}_\lambda \le \phi_+$ in $\set{z \in D}{z_1 \ge 0}$ (the convex hull of
$(\partial C_- \cup \partial C_+) \cap \{z_1 \ge 0\}$). Similarly,
$\check{H}_\lambda \le \phi_-$ in $\set{z \in D}{z_1 \le 0}$.
Combining these inequalities, we see that $\check{H}_\lambda = \Theta_\lambda$ in $D$.

The restriction of $H_\lambda$ to $C_+$ is smooth. It suffices to examine the
Hessian to see that it is also convex. Thus for any $z_0 \in C_+$, there
exists a linear function $\ell_0 \colon \R^2 \to \R$ such that
$\ell_0(z_0) = H_\lambda(z_0)$ and $\ell_0(z) \le H_\lambda(z)$ for all
$z \in C_+$. Indeed, differentiating $H_\lambda$, we see that
\[
\ell_0(z) = 2(1 + \lambda) \frac{z_{01}}{z_{02}} z_1 + \left(1 - (1 + \lambda) \frac{z_{01}^2}{z_{02}^2}\right) z_2
\]
for $z \in \R^2$, where $z_0 = (z_{01}, z_{02})$. We claim that $\ell_0 \in L$. To see why,
we note that
\[
1 - (1 + \lambda) \frac{z_{01}^2}{z_{02}^2} \ge \lambda
\]
because $z_0 \in C_+$. For $\tilde{z} \in \partial C_+$, we already know that
\[
\ell_0(\tilde{z}) \le H_\lambda(\tilde{z}) = 2|\tilde{z}_1| \sqrt{1 - \lambda^2} + \lambda \tilde{z}_2.
\]
For $z \in D \cup C_-$, choose $\tilde{z} \in \partial C_+$ with $\tilde{z}_1 = z_1$. Then
$z_2 \le \tilde{z}_2$, and therefore,
\[
\ell_0(z) \le \ell_0(\tilde{z}) + \lambda(z_2 - \tilde{z}_2)  \le 2|z_1| \sqrt{1 - \lambda^2} + \lambda z_2 \le H_\lambda(z).
\]
Hence $\ell_0 \in L$.

We conclude that $\check{H}_\lambda(z_0) = H_\lambda(z_0) = \Theta_\lambda(z_0)$.
Similar arguments apply to $C_-$ as well. Hence
$\check{H}_\lambda = \Theta_\lambda$ everywhere.

It remains to study the cases $\lambda = 1$ and $\lambda = - 1$. In both
cases, we have the identity $\Theta_\lambda(z) = |z_2|$. Furthermore, in both
cases, we compute $H_\lambda(0, z_2) = |z_2|$ and $H_\lambda(z) \ge |z_2|$
for all $z \in \R^2$. As $\liminf_{s \to \infty} H_\lambda(z_1, s) = 0$ for any
$z_1 \in \R$, it is clear that $\Theta_\lambda$ is the convex envelope.
\end{proof}

The above information allows us to prove the following.

\begin{lemma} \label{lem:subdifferential}
Suppose that $\kappa, \iota, \lambda \in [-1, 1]$ are three numbers such
that
\[
\iota^2 \le \min\{1 - \lambda^2, (1 + \kappa)(1 - \lambda), (1 - \kappa)(1 + \lambda)\}.
\]
Then
\[
\Theta_\lambda(z) \ge 2\iota z_1 + \kappa z_2
\]
for all $z \in \R^2$.
\end{lemma}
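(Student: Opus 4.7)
The plan is to exploit Lemma \ref{lem:convex-envelope}, which identifies $\Theta_\lambda$ as the convex envelope of $H_\lambda$. As shown in the proof of that lemma, $\Theta_\lambda(z)$ equals the supremum over all linear minorants of $H_\lambda$ (the set $L$ constructed there). Since $\ell(z) := 2\iota z_1 + \kappa z_2$ is itself linear, it suffices to verify that $\ell(z) \le H_\lambda(z)$ for every $z \in \R^2$; the inequality $\ell \le \Theta_\lambda$ then follows immediately from the convex-envelope characterization.

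I would split the verification according to the sign of $z_2$. For $z_2 > 0$, multiplying the target inequality $\ell(z) \le (1+\lambda) z_1^2/z_2 + z_2$ through by the positive quantity $z_2$ reduces it to the pointwise nonnegativity of the quadratic form
\begin{equation*}
(1+\lambda) z_1^2 - 2\iota z_1 z_2 + (1-\kappa) z_2^2
\end{equation*}
in $(z_1, z_2)$. Since $1 + \lambda \ge 0$ and $1 - \kappa \ge 0$, this form is positive semidefinite precisely when its discriminant is nonpositive, i.e., $\iota^2 \le (1+\lambda)(1-\kappa)$, which is one of the assumed bounds. For $z_2 < 0$, the analogous manipulation (taking care of the sign flip when multiplying through by $z_2$) reduces the claim to the nonnegativity of
\begin{equation*}
(1-\lambda) z_1^2 + 2\iota z_1 z_2 + (1+\kappa) z_2^2,
\end{equation*}
whose positive semidefiniteness is controlled exactly by the second available bound $\iota^2 \le (1-\lambda)(1+\kappa)$. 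The remaining case $z_2 = 0$ is trivial: either $z = 0$, in which case both sides vanish, or $z_1 \neq 0$, in which case $H_\lambda(z) = \infty$.

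Observe that only two of the three bounds in the hypothesis enter the argument; the condition $\iota^2 \le 1 - \lambda^2$ is not directly used for this lemma and seems to be kept for convenience or for applications downstream. I do not expect any serious obstacle here: the decisive move is the reduction, via the convex-envelope identity of Lemma \ref{lem:convex-envelope}, to checking that a linear function lies below $H_\lambda$, which in turn is a discrete set of two-variable quadratic-form nonnegativity conditions whose discriminants line up exactly with the hypotheses.
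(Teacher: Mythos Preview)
Your argument is correct and takes a genuinely different route from the paper. The paper works with $\Theta_\lambda$ directly: it differentiates $\Theta_\lambda$ at carefully chosen points to produce three explicit linear lower bounds (with slopes depending on $\iota$ and $\lambda$), and then obtains the desired inequality for general $\kappa$ by taking a convex combination of these subdifferentials. You instead go one level up, using the convex-envelope identity $\Theta_\lambda = \check{H}_\lambda$ from Lemma~\ref{lem:convex-envelope} to reduce the question to whether $\ell(z) = 2\iota z_1 + \kappa z_2$ minorises $H_\lambda$; since $H_\lambda$ has a simple closed form on each half-plane, this becomes a pair of quadratic-form discriminant conditions that match the hypotheses exactly.

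Your approach is shorter and more transparent, and it yields the extra observation that the condition $\iota^2 \le 1 - \lambda^2$ is redundant. Indeed, for $-1 < \lambda < 1$ one of the two products $(1+\lambda)(1-\kappa)$, $(1-\lambda)(1+\kappa)$ is always $\le 1-\lambda^2$ (compare $\kappa$ with $\lambda$), and for $\lambda = \pm 1$ one product vanishes. The paper's approach, by contrast, makes the geometric picture of subdifferentials of $\Theta_\lambda$ explicit, which may be of independent interest; but for the bare inequality your method is cleaner.
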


\begin{proof}
Suppose first that $-1 < \lambda < 1$. Since $\Theta_\lambda$ is positive $1$-homogeneous,
the convexity implied by Lemma \ref{lem:convex-envelope} means that for any $z_0 \in \R^2$, if $\Theta_\lambda$ is
differentiable at $z_0$, then
\[
\Theta_\lambda(z) \ge D\Theta_\lambda(z_0)z
\]
for every $z \in \R^2$. If $r, s \in \R$ such that $s> 0$ and
$(1 + \lambda) r^2 < (1 - \lambda) s^2$, then we can differentiate $\Theta_\lambda$ at $z_0 = (r, s)$. We conclude that
\[
\Theta_\lambda(z) \ge 2(1 + \lambda) \frac{r}{s} z_1 + \left(1 - (1 + \lambda)\frac{r^2}{s^2}\right)z_2.
\]
By continuity, the inequality still holds true when
$(1 + \lambda) r^2 \le (1 - \lambda) s^2$.

Given a number $\iota \in [-1, 1]$ such that $\lambda^2 + \iota^2 \le 1$,
we can set $s = 1 + \lambda$ and $r = \iota$. Then
\[
\left|\frac{r}{s}\right| = \frac{|\iota|}{1 + \lambda} \le \sqrt{\frac{1 - \lambda}{1 + \lambda}},
\]
and the inequality applies. Thus
\begin{equation} \label{eq:Theta1}
\Theta_\lambda(z) \ge 2\iota z_1 + \left(1 - \frac{\iota^2}{1 + \lambda}\right)z_2.
\end{equation}

Similarly, if $s <0$ and $(1 - \lambda) r^2 \le (1 + \lambda) s^2$, then
\[
\Theta_\lambda(z) \ge -2(1 - \lambda) \frac{r}{s} z_1 + \left((1 - \lambda)\frac{r^2}{s^2} - 1\right)z_2.
\]
If $\lambda^2 + \iota^2 \le 1$, we consider $s = -(1 - \lambda)$ and
$r = \iota$. Thus we derive the inequality
\begin{equation} \label{eq:Theta2}
\Theta_\lambda(z) \ge 2\iota z_1 + \left(\frac{\iota^2}{1 - \lambda} - 1\right)z_2.
\end{equation}

Finally, as we always have $\Theta_\lambda(z) \ge 2|z_1| \sqrt{1 - \lambda^2} + \lambda z_2$, we find in particular that
\begin{equation} \label{eq:Theta3}
\Theta_\lambda(z) \ge 2\iota z_1 + \lambda z_2.
\end{equation}

The right-hand sides of \eqref{eq:Theta1}--\eqref{eq:Theta3} therefore
represent subdifferentials of $\Theta_\lambda$ at $0$. Since the space of
subdifferentials is necessarily convex, the same applies to any convex
combination. That is, whenever $\lambda^2 + \iota^2 \le 1$ and
\[
\frac{\iota^2}{1 - \lambda} - 1 \le \kappa \le 1 - \frac{\iota^2}{1 + \lambda},
\]
then
\[
\Theta_\lambda(z) \ge 2\iota z_1 + \kappa z_2
\]
for all $z \in \R^2$. The above inequalities for $\kappa$, $\iota$, and $\lambda$ are clearly
equivalent to the inequality from the statement of the lemma.

We also note that for $\lambda = \pm 1$, the condition of the lemma requires that $\iota = 0$.
As $\Theta_\lambda(z) \ge |z_2|$ in any case, we still have the desired estimate.
\end{proof}

\subsection{Decomposition into curves}

According to a theory by Bonicatto and Gusev \cite{Bonicatto-Gusev:22},
any normal ($\R$-valued) $1$-current on $\R^2$ has a decomposition
into Lipschitz curves. We will apply this result to the first component
of an $\R^2$-valued $1$-current. To this end, we consider the space
$\Gamma$, comprising all Lipschitz functions $\gamma \colon [0, 1] \to \R^2$,
equipped with the uniform norm. Given $\gamma \in \Gamma$, let
$[\gamma]$ denote the $1$-current induced by $\gamma$ through the formula
\[
[\gamma](\zeta) = \int_0^1 \zeta(\gamma(t)) \dot{\gamma}(t) \, dt
\]
for $\zeta \in C_0^\infty(\R^2; \R^{1 \times 2})$. We also write
$\Gamma_0$ for the set of all $\gamma \in \Gamma$ with $\gamma(0) = \gamma(1)$,
and $\Gamma_1$ for the set of all $\gamma \in \Gamma$ with $\gamma(0) = a^-$
and $\gamma(1) = a^+$. Let $\gamma^0 \in \Gamma_1$ denote the curve
with $\gamma^0(t) = ta^+ + (1 - t) a^-$ for $t \in [0, 1]$ (parametrising the line
segment between $a^-$ and $a^+$).

Given a Borel measurable function $\lambda \colon \R^2 \to [-1, 1]$ and a
function $h \in C^2(\R^2)$, define the functional
\begin{multline*}
Z_{\lambda, h}(\gamma) = \int_0^1 \left(\sqrt{W(\gamma(t))} \Theta_{\lambda(\gamma(t))}(\dot{\gamma}(t)) + \frac{\partial^2 h}{\partial y_2^2}(\gamma(t)) \dot{\gamma}_2(t)\right) \, dt \\
- \dd{h}{y_2}(\gamma(1)) + \dd{h}{y_2}(\gamma(0))
\end{multline*}
for $\gamma \in \Gamma$. Recall that $C_{\bar{p}}^j(\R^2)$ denotes the space of all
$\phi \in C^2(\R^2)$ such that there exists a constant $C \ge 0$ satisfying $|D^k\phi(y)| \le C(|y|^{\bar{p} - k} + 1)$
for all $y \in \R^2$ and for $k = 0, \dots, j$.

\begin{theorem} \label{thm:decomposition}
Suppose that $W \colon \R^2 \to [0, \infty)$ is continuous and satisfies the growth
condition \eqref{eq:growth-W}. Let $\lambda \colon \R^2 \to [-1, 1]$ be a
Borel measurable function, and suppose that
$h \in C_{2 + \bar{p}}^2(\R^2)$ satisfies $\frac{\partial^2 h}{\partial y_1^2} = -\lambda \sqrt{W}$ in $\R^2$.
Then for any $T \in \ncrt_{2 \times 2}^0(\R^2)$,
\[
\M_F(T) \ge \frac{1}{2} \inf_{\gamma \in \Gamma_1} Z_{\lambda, h}(\gamma).
\]
\end{theorem}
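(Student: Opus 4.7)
The plan is to convert $\M_F(T)$ into a line integral along Lipschitz curves in three stages: the pointwise bound from Lemma \ref{lem:f^*-estimate}, an integration by parts that transfers information from the second component of $T$ onto $h$, and a Smirnov/Bonicatto--Gusev decomposition of the scalar first component $T_1$. Write $\vec{T} = \bigl(\begin{smallmatrix} r & s \\ t & -r \end{smallmatrix}\bigr)$ $\|T\|$-almost everywhere; Lemma \ref{lem:f^*-estimate} applied pointwise and integrated against $\|T\|$ yields
\[
\M_F(T) \ge \tfrac{1}{2}\int \sqrt{W}\,\Theta_{\lambda}(r,s)\,d\|T\| + \tfrac{1}{2}\int \lambda\sqrt{W}\,t\,d\|T\|.
\]
Substituting $\lambda\sqrt{W} = -\partial^2 h/\partial y_1^2$ identifies the second integral with $-T_2((\partial^2 h/\partial y_1^2, 0))$. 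Writing $(\partial^2 h/\partial y_1^2, 0) = \nabla(\partial h/\partial y_1) - (0, \partial^2 h/\partial y_1 \partial y_2)$, the gradient piece contributes $\partial T_2(\partial h/\partial y_1) = 0$, and the trace-free identity $T_2((0, g)) = -T_1((g, 0))$ converts the rest into $-T_1((\partial^2 h/\partial y_1 \partial y_2, 0))$. One more round with $(\partial^2 h/\partial y_1 \partial y_2, 0) = \nabla(\partial h/\partial y_2) - (0, \partial^2 h/\partial y_2^2)$, using $\partial T_1 = \delta_{a^+} - \delta_{a^-}$, gives
\[
\int \lambda\sqrt{W}\,t\,d\|T\| = \int s\,\frac{\partial^2 h}{\partial y_2^2}\,d\|T\| - \Bigl(\frac{\partial h}{\partial y_2}(a^+) - \frac{\partial h}{\partial y_2}(a^-)\Bigr).
\]
Since $h \in C_{2+\bar{p}}^2(\R^2)$ is unbounded, these manipulations must be justified by approximating the test functions $\partial h/\partial y_i$ with $\chi_R \partial h/\partial y_i$ for a smooth cutoff at scale $R \to \infty$; the errors are controlled by the polynomial growth of $D^k h$ against the moment bound $\int(|y|^{\bar{p}}+1)\,d\|T\| \le C\,\M_F(T) < \infty$, which follows from $f^*(N) \ge |N|^2/4$ combined with \eqref{eq:growth-W}.

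\textbf{Decomposition of $T_1$.} The resulting lower bound depends only on $T_1$, a normal scalar $1$-current with $\partial T_1 = \delta_{a^+} - \delta_{a^-}$. By Bonicatto--Gusev \cite{Bonicatto-Gusev:22}, it admits an acyclic superposition $T_1 = \int_\Gamma [\gamma]\,d\mu(\gamma)$ with $\mu = \mu_1\restr\Gamma_1 + \mu_0\restr\Gamma_0$ and $\mu_1(\Gamma_1) = 1$, because $a^-$ and $a^+$ are the unique source and sink. By the $1$-homogeneity of $\Theta_\lambda$ in its second argument and the linearity of $T_1$,
\[
\int \Bigl(\sqrt{W}\,\Theta_\lambda(r,s) + s\,\frac{\partial^2 h}{\partial y_2^2}\Bigr) d\|T\| = \int_\Gamma \int_0^1 \Bigl(\sqrt{W(\gamma)}\,\Theta_{\lambda(\gamma)}(\dot{\gamma}) + \frac{\partial^2 h}{\partial y_2^2}(\gamma)\,\dot{\gamma}_2\Bigr) dt\,d\mu(\gamma).
\]
The inner integral equals $Z_{\lambda, h}(\gamma) + [\partial h/\partial y_2(\gamma(1)) - \partial h/\partial y_2(\gamma(0))]$, giving $Z_{\lambda,h}(\gamma) + [\partial h/\partial y_2(a^+) - \partial h/\partial y_2(a^-)]$ on $\Gamma_1$ and $Z_{\lambda,h}(\gamma)$ on $\Gamma_0$. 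Since $\mu_1(\Gamma_1) = 1$, the endpoint constants cancel the trailing boundary term inherited from Step~1, yielding
\[
\M_F(T) \ge \tfrac{1}{2}\int_{\Gamma_1} Z_{\lambda, h}(\gamma)\,d\mu_1(\gamma) + \tfrac{1}{2}\int_{\Gamma_0} Z_{\lambda, h}(\gamma)\,d\mu_0(\gamma).
\]

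\textbf{Disposing of loops.} One may assume $\inf_{\Gamma_1} Z_{\lambda, h} > -\infty$, else the claim is trivial. A direct integration by parts on a single curve (integrating $\partial^2 h/\partial y_2^2(\gamma)\dot{\gamma}_2$ against the chain rule for $\partial h/\partial y_2(\gamma(t))$) rewrites $Z_{\lambda, h}(\gamma) = \int_0^1 [\sqrt{W(\gamma)}\,\Theta_{\lambda(\gamma)}(\dot{\gamma}) - \partial^2 h/\partial y_1 \partial y_2(\gamma)\,\dot{\gamma}_1]\,dt$ for every Lipschitz $\gamma$; in this form $Z_{\lambda, h}$ is invariant under positive reparametrisations and additive under concatenation. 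For any closed curve $\gamma_0 \in \Gamma_0$ based at $p_0$, append a straight segment from $a^-$ to $p_0$, $n$ iterates of $\gamma_0$, and a straight segment from $p_0$ to $a^+$ to produce $\tilde{\gamma}_n \in \Gamma_1$ with $Z_{\lambda, h}(\tilde{\gamma}_n) = S + n\,Z_{\lambda, h}(\gamma_0)$ for a finite $S$; boundedness of the infimum as $n \to \infty$ forces $Z_{\lambda, h}(\gamma_0) \ge 0$. The $\mu_0$-integral above is therefore non-negative and
\[
\M_F(T) \ge \tfrac{1}{2}\,\mu_1(\Gamma_1)\,\inf_{\gamma\in\Gamma_1} Z_{\lambda, h}(\gamma) = \tfrac{1}{2}\inf_{\gamma\in\Gamma_1} Z_{\lambda, h}(\gamma).
\]
The most delicate step is the integration by parts of Step~1 under the merely polynomial growth of $h$; the remaining steps are a bookkeeping arrangement of the Bonicatto--Gusev decomposition together with the concatenation argument that removes the loop contribution.
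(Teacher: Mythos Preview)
Your proof is correct and follows essentially the same route as the paper: the pointwise bound from Lemma~\ref{lem:f^*-estimate}, the cutoff-justified integration by parts exploiting $\partial T_2=0$, $T_{22}=-T_{11}$, and $\partial T_1=\delta_{a^+}-\delta_{a^-}$, then the Bonicatto--Gusev decomposition of $T_1$ and the concatenation argument to dispose of loops. The only cosmetic differences are that the paper first performs a Radon--Nikodym split $\|T\|=\nu_1+\nu_2$ with respect to $\|T_1\|$ (which you bypass by applying Lemma~\ref{lem:f^*-estimate} globally, legitimately since $\Theta_\lambda(0,0)=0$), and that your rewriting $Z_{\lambda,h}(\gamma)=\int_0^1\bigl[\sqrt{W(\gamma)}\,\Theta_{\lambda(\gamma)}(\dot\gamma)-\tfrac{\partial^2 h}{\partial y_1\partial y_2}(\gamma)\,\dot\gamma_1\bigr]\,dt$ makes the reparametrisation invariance and concatenation additivity more transparent than in the paper.
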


\begin{proof}
Let
\[
m_0 = \frac{1}{2} \inf_{\gamma \in \Gamma_1} Z_{\lambda, h}(\gamma).
\]
If $m_0 < 0$, then there is nothing to prove, as $\M_F(T) \ge 0$ for all $T \in \ncrt_{2 \times 2}^0$.
We therefore assume that $m_0 \ge 0$.

Let $T \in \ncrt_{2 \times 2}^0(\R^2)$ with $\M_F(T) < \infty$.
We write
\[
\vec{T} = \begin{pmatrix} T_{11} & T_{12} \\ T_{21} & T_{22} \end{pmatrix},
\]
and we write $T_1, T_2$ for the components of $T$, i.e., for the
$\R$-valued $1$-currents such that
\[
T_i(\zeta) = \int_{\R^2} \zeta \begin{pmatrix} T_{i1} \\ T_{i2} \end{pmatrix} \, d\|T\|
\]
for $\zeta \in C_0^\infty(\R^2; \R^{1 \times 2})$. We consider the
Radon-Nikodym decomposition of the measure $\|T\|$ with respect to
$\|T_1\|$. Thus we obtain two measures $\nu_1$ and $\nu_2$ with
$\|T\| = \nu_1 + \nu_2$, such that $\nu_1 \ll \|T_1\|$ and $\nu_2 \perp \|T_1\|$.
Since $F^*(y, \vec{T}(y)) < \infty$ at $\|T\|$-almost every $y \in \R^2$,
it follows that
$\vec{T} = \pm (\begin{smallmatrix} 0 & 0 \\ 1 & 0 \end{smallmatrix})$
at $\nu_2$-almost every point. At $\nu_1$-almost every point, on the
other hand, we conclude that
$\vec{T} = (\begin{smallmatrix} T_{11} & T_{12} \\ T_{21} & -T_{11} \end{smallmatrix})$.
According to Lemma \ref{lem:f^*-estimate}, we now have the inequality
\begin{equation} \label{eq:M_F-first-estimate}
\begin{split}
\M_F(T) & = \int_{\R^2} \sqrt{W(y) f^*(\vec{T})} \, d\|T\| \\
& \ge \frac{1}{2} \int_{\R^2} \sqrt{W(y)} \bigl(\Theta_\lambda(T_{11}, T_{12}) + \lambda T_{21} \bigr) \, d\nu_1 + \frac{1}{2} \int_{\R^2} \sqrt{W(y)} \, d\nu_2 \\
& \ge \frac{1}{2} \int_{\R^2} \sqrt{W(y)} \Theta_\lambda(T_{11}, T_{12}) \, d\nu_1 + \frac{1}{2} \int_{\R^2} \sqrt{W(y)} \lambda T_{21} \, d\|T\| \\
& = \frac{1}{2} \int_{\R^2} \sqrt{W(y)} \Theta_\lambda(T_{11}, T_{12}) \, d\nu_1 - \frac{1}{2} \int_{\R^2}  \frac{\partial^2 h}{\partial y_1^2} T_{21} \, d\|T\|.
\end{split}
\end{equation}

We now want to test the condition $\partial T_2 = 0$ with the function
$\dd{h}{y_1}$, but since it does not have compact support in general, we
require an approximation procedure here. For $R > 0$, let
$\chi_R \in C_0^\infty(B_{3R})$ with $\chi_R \equiv 1$ in $B_R(0)$
and $0 \le \chi_R \le 1$ everywhere, and such that $|D\chi_R| \le 1/R$.
Then
\[
\begin{split}
0 & = \partial T_2\left(\chi_R \dd{h}{y_1}\right) \\
& = \int_{\R^2} \chi_R \left(\frac{\partial^2 h}{\partial y_1^2} T_{21} + \frac{\partial^2 h}{\partial y_1 \partial y_2} T_{22}\right) \, d\|T\| \\
& \quad + \int_{\R^2} \dd{h}{y_1} \left(\dd{\chi_R}{y_1} T_{21} + \dd{\chi_R}{y_2} T_{22}\right) \, d\|T\|.
\end{split}
\]
Because of the assumptions on $h$, there is a constant $C_1 > 0$ such that
$|D^2 h| \le C_1 (\sqrt{W} + 1)$ and $|Dh| |D\chi_R| \le C_1 (\sqrt{W} + 1)$ in $\R^2$ for all $R> 0$.
Since $\|T\|$ is a Radon measure and  $\M_F(T) < \infty$, we know that $\sqrt{W} + 1$ is integrable with
respect to $\|T\|$. Hence we can use Lebesgue's dominated convergence
theorem when we take the limit $R \to \infty$. It follows that
\[
0 = \int_{\R^2} \left(\frac{\partial^2 h}{\partial y_1^2} T_{21} + \frac{\partial^2 h}{\partial y_1 \partial y_2} T_{22}\right) \, d\|T\|.
\]
Similarly, since $\partial T_1 = \partial T_1^0$, we can show that
\[
\int_{\R^2} \left(\frac{\partial^2 h}{\partial y_1 \partial y_2} T_{11} + \frac{\partial^2 h}{\partial y_2^2} T_{12}\right) \, d\|T\| = \dd{h}{y_2}(a^+) - \dd{h}{y_2}(a^-).
\]
Using also the fact that $T_{11} + T_{22} = 0$ almost everywhere, and
combining these formulas with \eqref{eq:M_F-first-estimate}, we obtain
\begin{equation} \label{eq:M_F-second-estimate}
\M_F(T) \ge \frac{1}{2} \int_{\R^2} \left(\sqrt{W(y)} \Theta_\lambda(T_{11}, T_{12}) + \frac{\partial^2 h}{\partial y_2^2} T_{12}\right) \, d\|T\| - \dd{h}{y_2}(a^+) + \dd{h}{y_2}(a^-).
\end{equation}

The results of Bonicatto and Gusev \cite{Bonicatto-Gusev:22} give rise to a Borel measure $\mu$ on $\Gamma$
such that
\begin{equation} \label{eq:decomposition}
T_1 = \int_\Gamma [\gamma] \, d\mu(\gamma),
\end{equation}
in the sense that
\[
T_1(\zeta) = \int_\Gamma [\gamma](\zeta) \, d\mu(\gamma)
\]
for any $\zeta \in C_0^\infty(\R^2; \R^{1 \times 2})$. Moreover,
this measure also satisfies
\begin{equation} \label{eq:decomposition-total-variation}
\|T_1\| = \int_\Gamma \|[\gamma]\| \, d\mu(\gamma)
\end{equation}
and
\begin{equation} \label{eq:decomposition-boundary}
\|\partial T_1\| = \int_\Gamma \|\partial [\gamma]\| \, d\mu(\gamma)
\end{equation}
(which is to be interpreted similarly).

For any $\gamma \in \Gamma$, we clearly have $\partial [\gamma] = 0$
if $\gamma \in \Gamma_0$ and $\|\partial [\gamma]\|(\R^2) = 2$
otherwise. Thus \eqref{eq:decomposition-boundary} implies that
$\mu(\Gamma_1) = 1$ and $\mu(\Gamma \setminus (\Gamma_0 \cup \Gamma_1)) = 0$,
while \eqref{eq:M_F-second-estimate}--\eqref{eq:decomposition-total-variation} imply that
\[
\M_F(T) \ge \frac{1}{2} \int_\Gamma Z_{\lambda, h}(\gamma) \, d\mu(\gamma).
\]

Since $Z_{\lambda, h}(\gamma) \ge 2m_0$ for all $\gamma \in \Gamma_1$, we automatically have the inequality $Z_{\lambda, h}(\gamma) \ge 0$
for all $\gamma \in \Gamma_0$. (If we had $\hat{\gamma} \in \Gamma_0$ with $Z_{\lambda, h}(\hat{\gamma}) < 0$,
then we could construct a sequence of curves $\gamma_k \in \Gamma_1$ with
$\lim_{k \to \infty} Z_{\lambda, h}(\gamma_k) = - \infty$ as follows: choose
$\tilde{\gamma}_1, \tilde{\gamma}_2 \in \Gamma$ with $\tilde{\gamma}_1(0) = a^-$,
$\tilde{\gamma}_2(1) = a^+$, and
$\tilde{\gamma}_1(1) = \tilde{\gamma}_2(0) = \hat{\gamma}(0)$.
Concatenate $\tilde{\gamma}_1$ with $k$ copies of $\hat{\gamma}$ and then
$\tilde{\gamma}_2$, and reparametrise appropriately. Note that
$Z_{\lambda, h}$ is invariant under reparametrisation.)

It therefore follows that
\[
\M_F(T) \ge \frac{1}{2} \int_{\Gamma_1} Z_{\lambda, h}(\gamma) \, d\mu(\gamma) \ge m_0.
\]
This concludes the proof.
\end{proof}

\subsection{Proof of Corollary \ref{cor:PDE}}

Given suitable functions $\lambda$ and $h$, the minimisers of the functional $Z_{\lambda, h}$
can in principle be determined with the conventional tools from the calculus of
variations. There are some difficulties coming from the fact that $\Theta_\lambda$ has linear
growth, but nevertheless, an analysis of certain ordinary differential equations can then potentially reveal some
information about the central question of this paper. In practice, however, it is difficult to
make any specific statements this way. The proof of Corollary \ref{cor:PDE}, however, also relies on Theorem \ref{thm:decomposition}.

\begin{proof}[Proof of Corollary \ref{cor:PDE}]
We first observe that we may assume without loss of generality that $w \ge 0$ in $\R^2$. If this
condition does not hold true, then we can replace $w$ by $|w|$ and replace $\iota$, $\kappa$, and $\lambda$
by $w\iota/|w|$, $w\kappa/|w|$, and $w\lambda/|w|$, respectively.
This will change neither the inequalities in the statement nor equation \eqref{eq:PDE-criterion}.

Let $b \in \R$. Define
\begin{multline*}
h(y) = - \int_{a_1^-}^{y_1} (y_1 - s) (\lambda w)(s, y_2) \, ds \\
+ (y_1 - a_1^-) \int_b^{y_2} \left(2(\iota w)(a_1^-, t) - (y_2 - t) \dd{}{y_1}(\kappa w)(a_1^-, t)\right) \, dt.
\end{multline*}
Then
\[
\frac{\partial^2 h}{\partial y_1^2} = - \lambda w
\]
and
\begin{multline*}
\frac{\partial^2 h}{\partial y_1 \partial y_2}(y) = - \int_{a_1^-}^{y_1} \dd{}{y_2}(\lambda w)(s, y_2) \, ds + 2(\iota w)(a_1^-, y_2) \\
- \int_b^{y_2} \dd{}{y_1}(\kappa w)(a_1^-, t) \, dt.
\end{multline*}
Moreover,
\[
\begin{split}
\frac{\partial^2 h}{\partial y_2^2}(y) & = - \int_{a_1^-}^{y_1} (y_1 - s) \frac{\partial^2}{\partial y_2^2}(\lambda w)(s, y_2) \, ds \\
& \quad + (y_1 - a_1^-) \left(2\dd{}{y_2}(\iota w)(a_1^-, y_2) - \dd{}{y_1}(\kappa w)(a_1^-, y_2)\right) \\
& = \int_{a_1^-}^{y_1} (y_1 - s) \left(2\frac{\partial^2}{\partial y_1 \partial y_2}(\iota w)(s, y_2) - \frac{\partial^2}{\partial y_1^2}(\kappa w)(s, y_2)\right) \, ds \\
& \quad + (y_1 - a_1^-) \left(2\dd{}{y_2}(\iota w)(a_1^-, y_2) - \dd{}{y_1}(\kappa w)(a_1^-, y_2)\right) \\
& = \int_{a_1^-}^{y_1} \left(2\dd{}{y_2}(\iota w)(s, y_2) - \dd{}{y_1}(\kappa w)(s, y_2)\right) \, ds \\
& = 2\int_{a_1^-}^{y_1} \dd{}{y_2}(\iota w)(s, y_2) \, ds - (\kappa w)(y) + (\kappa w)(a_1^-, y_2)
\end{split}
\]
by \eqref{eq:PDE-criterion} and an integration by parts.
From this, we see that $h \in C_{2 + \bar{p}}^2(\R^2)$.

Also consider the function
\begin{multline*}
\phi(y) = \dd{h}{y_2}(y) + \int_b^{y_2} (\kappa w)(y_1, t) \, dt - 2y_1 (\iota w)(a_1^-, b)\\
+ \int_{a_1^-}^{y_1} \left((y_1 - s) \dd{}{y_2} (\lambda w)(s, b) + 2 (\iota w)(s, b)\right) \, ds.
\end{multline*}
Then
\begin{equation} \label{eq:derivative-of-phi}
\begin{split}
\dd{\phi}{y_1}(y) & = \frac{\partial^2 h}{\partial y_1 \partial y_2}(y) + \int_b^{y_2} \dd{}{y_1}(\kappa w)(y_1, t) \, dt - 2(\iota w)(a_1^-, b) \\
& \quad + \int_{a_1^-}^{y_1} \dd{}{y_2}(\lambda w)(s, b) \, ds + 2(\iota w)(y_1, b) \\
& = - \int_{a_1^-}^{y_1} \dd{}{y_2}(\lambda w)(s, y_2) \, ds - \int_b^{y_2} \dd{}{y_1}(\kappa w)(a_1^-, t) \, dt \\
& \quad + \int_b^{y_2} \dd{}{y_1}(\kappa w)(y_1, t) \, dt + \int_{a_1^-}^{y_1} \dd{}{y_2}(\lambda w)(s, b) \, ds \\
& \quad + 2(\iota w)(a_1^-, y_2) + 2(\iota w)(y_1, b) - 2(\iota w)(a_1^-, b).
\end{split}
\end{equation}
Because of equation \eqref{eq:PDE-criterion}, we compute
\[
\begin{split}
0 & = \int_{a_1^-}^{y_1} \int_b^{y_2} \left(\frac{\partial^2}{\partial y_1^2}(\kappa w)(s, t) - \frac{\partial^2}{\partial y_2^2}(\lambda w)(s, t) - 2\frac{\partial^2}{\partial y_1 \partial y_2}(\iota w)(s, t)\right) \, dt \, ds \\
& = \int_b^{y_2} \left(\dd{}{y_1}(\kappa w)(y_1, t) - \dd{}{y_1}(\kappa w)(a_1^-, t)\right) \, dt \\
& \quad - \int_{a_1^-}^{y_1} \left(\dd{}{y_2}(\lambda w)(s, y_2) - \dd{}{y_2}(\lambda w)(s, b)\right) \, ds \\
& \quad - 2\int_{a_1^-}^{y_1} \left(\dd{}{y_1}(\iota w)(s, y_2) - \dd{}{y_1}(\iota w)(s, b)\right) \, ds \\
& = \int_b^{y_2} \dd{}{y_1}(\kappa w)(y_1, t) \, dt - \int_b^{y_2} \dd{}{y_1}(\kappa w)(a_1^-, t) \, dt \\
& \quad - \int_{a_1^-}^{y_1} \dd{}{y_2}(\lambda w)(s, y_2) \, ds + \int_{a_1^-}^{y_1} \dd{}{y_2}(\lambda w)(s, b) \, ds \\
& \quad - 2(\iota w)(y) + 2(\iota w)(a_1^-, y_2) + 2(\iota w)(y_1, b) - 2(\iota w)(a_1^-, b).
\end{split}
\]
Comparing with \eqref{eq:derivative-of-phi}, we conclude that
\[
\dd{\phi}{y_1} = 2\iota w.
\]
Furthermore, we compute
\[
\dd{\phi}{y_2} = \kappa w + \frac{\partial^2 h}{\partial y_2^2}.
\]

Let $\gamma \in \Gamma_1$. By Lemma \ref{lem:subdifferential}, we can now estimate
\[
\begin{split}
Z_{\lambda, h}(\gamma) & = \int_0^1 \left(w(\gamma) \Theta_{\lambda(\gamma)}(\dot{\gamma}) + \frac{\partial^2 h}{\partial y_2^2}(\gamma) \dot{\gamma}_2\right) \, dt - \dd{h}{y_2}(a^+) + \dd{h}{y_2}(a^-) \\
& \ge \int_0^1 D\phi(\gamma) \dot{\gamma} \, dt - \dd{h}{y_2}(a^+) + \dd{h}{y_2}(a^-) \\
& = \phi(a^+) - \phi(a^-) - \dd{h}{y_2}(a^+) + \dd{h}{y_2}(a^-) \\
& = \int_{a_2^-}^{a_2^+} (\kappa w)(a_1^-, t) \, dt.
\end{split}
\]
The claim then follows from Theorem \ref{thm:decomposition}.
\end{proof}

\section{Examples} \label{sct:examples}

\subsection{Variants of the Aviles-Giga functional}

A singular perturbation problem involving the quantity
\[
\frac{1}{2} \int_\Omega \left(\epsilon |D^2 \phi|^2 + \frac{1}{\epsilon} (1 - |D\phi|^2)^2\right) \, dx,
\]
was studied by Aviles and Giga \cite{Aviles-Giga:87}, and subsequently by
many other authors, including, e.g., Ambrosio, De Lellis, and Mantegazza \cite{Ambrosio-DeLellis-Mantegazza:99}
and DeSimone, Kohn, M\"uller, and Otto \cite{DeSimone-Kohn-Mueller-Otto:01}. A key contribution by Jin and Kohn \cite{Jin-Kohn:00}
determined the energy required for a jump of $D\phi$, with tools similar
to what we use in this paper.

If we define $u = \nabla^\perp \phi$, then we have the functional $E_\epsilon(u; \Omega)$ from
the introduction with the constraint $\div u = 0$. Our theory therefore applies in principle
(but will of course not give anything new, as the problem is well understood, at least in relation
to the question that we study here).

Indeed, the function $w(y) = 1 - |y|^2$ (corresponding to $W(y) = (w(y))^2 = (1 - |y|^2)^2$) is
a solution of the wave equation
\[
\frac{\partial^2 w}{\partial y_1^2} - \frac{\partial^2 w}{\partial y_2^2} = 0
\]
(a fact that was also observed by Ignat and Monteil \cite{Ignat-Monteil:20}),
thus it satisfies the hypothesis of Corollary \ref{cor:PDE} with $\kappa = \lambda = 1$ and $\iota = 0$.
For $a^-, a^+ \in S^1 = \set{y \in \R^2}{|y| = 1}$, we therefore obtain
\[
\mathcal{E}(a^-, a^+) = \int_{[a^-, a^+]} w \, d\Ha^1 = \frac{1}{6} |a^+ - a^-|^3.
\]

We now consider potentials that are different, but similar in structure, including
\[
w(y) = |y|^{2n}(1 - |y|^2) \quad \text{and} \quad w(y) = 1 - |y|^{2n}
\]
for some $n \in \N$, and
\begin{equation} \label{eq:AG-beta}
w(y) = (1 - |y|^2)^\beta
\end{equation}
for a number $\beta \in (0, 1)$. We first note that for the last of these, when $\beta > 1$, the optimal transitions between
two points $a^-, a^+ \in S^1$ are \emph{not} expected to be one-dimensional by the results of
Ambrosio, De Lellis, and Mantegazza \cite{Ambrosio-DeLellis-Mantegazza:99} (see also the
discussion by Ignat and Merlet \cite{Ignat-Merlet:12}).

We restrict our attention
to transitions between the points $a^- = (0, -1)$ and $a^+ = (0, 1)$ here, because we make use
of the resulting symmetry. It is an open problem whether the corresponding statements hold
true in general, but for \eqref{eq:AG-beta}, the work of Ignat and Merlet \cite{Ignat-Merlet:12} at least
gives some results supporting the conjecture that the optimal transition profile will be one-dimensional
when $\beta \in (0, 1)$ and $|a^+ - a^-|$ is small.

We wish to make use of Corollary \ref{cor:PDE}, but it suffices to consider the case $\iota = 0$.
Thus we study the question whether there exist two functions $\kappa, \lambda \colon \R^2 \to [-1, 1]$
such that
\[
\frac{\partial^2}{\partial y_1^2}(\kappa w) = \frac{\partial^2}{\partial y_2^2}(\lambda w)
\]
in $\R^2$ and $\kappa = 1$ on $[a^-, a^+]$. Note that these conditions are satisfied if there exists $\phi \in C^4(\R^2)$
such that
\begin{equation} \label{eq:second-derivatives-bounded-by-w}
\left|\frac{\partial^2 \phi}{\partial y_1^2}\right| \le w \quad \text{and} \quad \left|\frac{\partial^2 \phi}{\partial y_2^2}\right| \le w
\end{equation}
and
\begin{equation} \label{eq:equality-on-line-segment}
\frac{\partial^2 \phi}{\partial y_2^2} = w \quad \text{on $[a^-, a^+]$}.
\end{equation}
Indeed, in this case, we can set
\[
\kappa = w^{-1} \frac{\partial^2 \phi}{\partial y_2^2} \quad \text{and} \quad \lambda = w^{-1} \frac{\partial^2 \phi}{\partial y_1^2}.
\]
We therefore consider the set
\[
\mathcal{W}(a^-, a^+) = \set{w \in C^0(\R^2)}{\text{there exists } \phi \in C^4(\R^2) \text{ satisfying \eqref{eq:second-derivatives-bounded-by-w} and \eqref{eq:equality-on-line-segment}}}.
\]
It is easy to see that $\mathcal{W}(a^-, a^+)$ has the following properties.
\begin{enumerate}
\item If $w \in \mathcal{W}(a^-, a^+)$ and $t \ge 0$, them $tw \in \mathcal{W}(a^-, a^+)$.
\item If $w_1, w_2 \in \mathcal{W}(a^-, a^+)$, then $w_1 + w_2 \in \mathcal{W}(a^-, a^+)$.
\end{enumerate}
Thus $\mathcal{W}(a^-, a^+)$ is a convex cone.

\begin{proposition} \label{prp:polynomial}
For any $n \in \N_0$, there exists a polynomial $P \colon \R^2 \to \R$ such
that
\begin{equation} \label{eq:second-derivatives-polynomial}
\left|\frac{\partial^2 P}{\partial y_1^2}(y)\right| \le |y|^{2n} \bigl|1 - |y|^2\bigr| \quad \text{and} \quad \left|\frac{\partial^2 P}{\partial y_2^2}(y)\right| \le |y|^{2n} \bigl|1 - |y|^2\bigr|
\end{equation}
for every $y \in \R^2$ and
and
\begin{equation} \label{eq:equality-polynomial}
\frac{\partial^2 P}{\partial y_2^2}(0, y_2) = y_2^{2n} (1 - y_2^2)
\end{equation}
for every $y_2 \in \R$.
\end{proposition}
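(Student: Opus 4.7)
The plan is to construct $P$ explicitly, guided by structural observations about the desired second derivatives. The key observation is that the pointwise bound \eqref{eq:second-derivatives-polynomial} forces $\partial_1^2 P$ and $\partial_2^2 P$ to vanish on $\{|y|=1\}$ (and, for $n \ge 1$, to vanish to order $2n$ at the origin); being polynomials, they must therefore take the form
\[
\partial_1^2 P = (1 - |y|^2) H_1(y), \qquad \partial_2^2 P = (1 - |y|^2) H_2(y),
\]
with $H_1, H_2$ homogeneous polynomials of degree $2n$. The conditions on $P$ then translate into $|H_1|, |H_2| \le |y|^{2n}$ in $\R^2$ and $H_2(0, y_2) = y_2^{2n}$, together with the Schwarz compatibility $\partial_1^2 \partial_2^2 P = \partial_2^2 \partial_1^2 P$.

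Exploiting the $y_1 \leftrightarrow y_2$ symmetry of the constraints, I would set $H_1(y_1, y_2) = H_2(y_2, y_1)$ and expand
\[
H_2(y) = \sum_{k=0}^n a_k y_1^{2k} y_2^{2n - 2k}, \qquad a_0 = 1.
\]
Matching coefficients of each monomial $y_1^{2i} y_2^{2j}$ in the compatibility equation $\partial_2^2[(1-|y|^2) H_1] = \partial_1^2[(1-|y|^2) H_2]$ produces a linear system for $(a_1, \ldots, a_n)$, which a direct computation shows to be consistent and solvable by the recurrence
\[
a_{j+1} = \frac{(4j+1)(n-j)(2n-2j-1)}{(j+1)(2j+1)(4n-4j+1)}\, a_j, \qquad 0 \le j \le n-1,
\]
so that all $a_k$ are strictly positive and $a_n = 1/(4n+1)$. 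With $H_1, H_2$ determined, the polynomial $P$ is recovered by two successive integrations of $\partial_1^2 P = (1-|y|^2) H_1$, the compatibility guaranteeing that the resulting $P$ simultaneously satisfies $\partial_2^2 P = (1-|y|^2) H_2$.

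To verify $|H_2(y)| \le |y|^{2n}$, I would use homogeneity to reduce to $|H_2| \le 1$ on the unit circle. Parametrising by $s = y_2^2 \in [0, 1]$ (so $y_1^2 = 1 - s$) gives the polynomial
\[
p(s) = \sum_{k = 0}^n a_k (1 - s)^k s^{n-k}.
\]
Since $a_k > 0$, the bound $p(s) \ge 0$ is immediate, and $p(1) = a_0 = 1$; the corresponding bound for $H_1$ follows from the swap symmetry.

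The main obstacle will then be the upper bound $p(s) \le 1$ on $[0, 1]$. For small $n$ one can check this by factoring $1 - p(s) = 4(1-s) Q_n(s)/(4n+1)$ and verifying that $Q_n$ has nonnegative coefficients (e.g.\ $Q_1 = 1$, $Q_2 = s + 2$, $Q_3 = 4s^2 + 5s + 9$), but a uniform argument is required for general $n$. The cleanest approach will probably be an induction on $n$ that propagates such a factorisation via the recurrence for the $a_k$; the clean pattern $a_n = 1/(4n+1)$ suggests a hypergeometric identity underlies the coefficients, in which case $p$ might admit a direct Bernstein-type representation making the peak at $s = 1$ transparent.
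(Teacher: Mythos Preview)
Your approach is essentially the same as the paper's: the same ansatz $\partial_2^2 P = (1-|y|^2)H_2$ with $H_2$ homogeneous of degree $2n$, the same $y_1\leftrightarrow y_2$ symmetry, the same linear recurrence for the coefficients (your $a_k$ is the paper's $c_{n-k}$), and the same reduction of the inequality to $0\le \sum_k c_k\, y_1^{2n-2k}y_2^{2k}\le |y|^{2n}$. The lower bound via positivity of the $c_k$ is exactly what the paper does.

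The gap is precisely where you locate it: the upper bound $p(s)\le 1$. Your suggested routes (induction on $n$, factoring $1-p$, a hypergeometric identity) are plausible but undeveloped, and the recurrence mixes $n$ and $k$ in a way that makes a clean induction on $n$ awkward. The paper closes this gap with a short and rather pretty trick that you are missing. Set
\[
b_k=\frac{c_k}{(2k+1)\binom{n}{k}},\qquad k=0,\dots,n-1.
\]
The recurrence for $c_k$ simplifies to
\[
\frac{b_k}{b_{k-1}}=\frac{(2n-2k+1)(4k+1)}{(2k+1)(4(n-k)+1)},
\]
and one checks that this ratio is $\ge 1$ iff $k\ge n/2$; so $b_k$ is first nonincreasing, then nondecreasing. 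The endpoint values coincide: from the extremal equations one computes $c_0=1/(4n+1)$ and $c_{n-1}=(2n^2-n)/(4n+1)$, whence $b_0=b_{n-1}=1/(4n+1)$. Therefore $b_k\le 1/(4n+1)$ for all $k$, i.e.\ $c_k\le \frac{2k+1}{4n+1}\binom{n}{k}\le \binom{n}{k}$, and the binomial theorem gives
\[
\sum_{k=0}^n c_k\, y_1^{2n-2k}y_2^{2k}\le \sum_{k=0}^n \binom{n}{k} y_1^{2n-2k}y_2^{2k}=|y|^{2n}.
\]
This replaces your proposed induction entirely and is the only substantive idea you are missing.
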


\begin{proof}
For $n = 0$, a suitable polynomial is
\[
P(y) = \frac{|y|^2}{2} - \frac{|y|^4}{12} - \frac{y_1^2 y_2^2}{3}.
\]

We now assume that $n \ge 1$. We look for a polynomial such that
\[
\frac{\partial^2 P}{\partial y_2^2}(y) = (1 - |y|^2) \left(y_2^{2n} + \sum_{k = 0}^{n - 1} c_k y_1^{2n - 2k} y_2^{2k}\right)
\]
for certain coefficients $c_0, \dotsc, c_{n - 1}$. Setting $c_n = 1$, we
can write
\[
\begin{split}
\frac{\partial^2 P}{\partial y_2^2}(y) & = (1 - |y|^2) \sum_{k = 0}^n c_k y_1^{2n - 2k} y_2^{2k} \\
& = (1 - y_1^2) \sum_{k = 0}^n c_k y_1^{2n - 2k} y_2^{2k} - \sum_{k = 0}^n c_k y_1^{2n - 2k} y_2^{2k + 2}.
\end{split}
\]
A possible solution is
\[
\begin{split}
P(y) & = (1 - y_1^2) \sum_{k = 0}^n \frac{c_k}{(2k + 2)(2k + 1)} y_1^{2n - 2k} y_2^{2k + 2} \\
& \quad - \sum_{k = 0}^n \frac{c_k}{(2k + 4)(2k + 3)} y_1^{2n - 2k} y_2^{2k + 4} \\
& \quad + \frac{y_1^{2n + 2}}{(2n + 2)(2n + 1)} - \frac{y_1^{2n + 4}}{(2n + 4)(2n + 3)} \\
& = \sum_{k = 0}^{n - 1} \frac{c_k}{(2k + 2)(2k + 1)} y_1^{2n - 2k} y_2^{2k + 2} + \frac{y_1^{2n + 2} + y_2^{2n + 2}}{(2n + 2)(2n + 1)} \\
& \quad - \sum_{k = 1}^{n - 1} \frac{c_k + c_{k - 1}}{(2k + 2)(2k + 1)} y_1^{2n - 2k + 2} y_2^{2k + 2} - \frac{c_0}{2} y_1^{2n + 2} y_2^2 \\
& \quad - \frac{1 + c_{n - 1}}{(2n + 2)(2n + 1)} y_1^2 y_2^{2n + 2} - \frac{y_1^{2n + 4} + y_2^{2n + 4}}{(2n + 4)(2n + 3)}.
\end{split}
\]

We want to impose the symmetry condition $P(y_1, y_2) = P(y_2, y_1)$
(so that the above condition on $\frac{\partial^2 P}{\partial y_2^2}$
automatically gives a similar condition for
$\frac{\partial^2 P}{\partial y_1^2}$). This requires that
\begin{equation} \label{eq:coefficients1}
\frac{c_0}{2} = \frac{1 + c_{n - 1}}{(2n + 2)(2n + 1)}
\end{equation}
and
\begin{equation} \label{eq:coefficients2}
\frac{c_k}{(2k + 2)(2k + 1)} = \frac{c_{n - k - 1}}{(2n - 2k)(2n - 2k - 1)}, \quad k = 0, \dotsc, n - 1,
\end{equation}
and
\begin{equation} \label{eq:coefficients3}
\frac{c_k + c_{k - 1}}{(2k + 2)(2k + 1)} = \frac{c_{n - k} + c_{n - k - 1}}{(2n - 2k + 2)(2n - 2k + 1)}, \quad k = 1, \dotsc, n - 1.
\end{equation}
(It may appear at first that there are too many equations for the $n$ variables
$c_0, \dotsc, c_{n - 1}$, but there is some repetition here.
Once the redundant equations are discarded, it is easy to see that there
is a unique solution.)

The combination of \eqref{eq:coefficients2} and \eqref{eq:coefficients3} gives
\[
\begin{split}
c_k + c_{k - 1} & = \frac{(2k + 2)(2 k + 1)}{(2n - 2k + 2)(2n - 2k + 1)}(c_{n - k} + c_{n - k - 1}) \\
& = \frac{(2n - 2k)(2n - 2k - 1)}{(2n - 2k + 2)(2n - 2k + 1)} c_k + \frac{(2k + 2)(2 k + 1)}{2k(2k - 1)} c_{k - 1}
\end{split}
\]
for $k = 1, \dotsc, n - 1$. Thus
\[
\left(1 - \frac{(2n - 2k)(2n - 2k - 1)}{(2n - 2k + 2)(2n - 2k + 1)}\right) c_k = \left(\frac{(2k + 2)(2 k + 1)}{2k(2k - 1)} - 1\right) c_{k - 1}.
\]
We compute
\[
1 - \frac{(2n - 2k)(2n - 2k - 1)}{(2n - 2k + 2)(2n - 2k + 1)} = \frac{8(n - k) + 2}{(2n - 2k + 2)(2n - 2k + 1)}
\]
and
\[
\frac{(2k + 2)(2 k + 1)}{2k(2k - 1)} - 1 = \frac{8k + 2}{2k(2k - 1)}.
\]
Therefore, we obtain the equation
\[
\frac{c_k}{c_{k - 1}} = \frac{(2n - 2k + 2)(2n - 2k + 1)(4k + 1)}{2k(2k - 1)(4(n - k) + 1)}
\]
for $k = 1, \dotsc, n - 1$.

Define
\[
b_k = \frac{c_k}{(2k + 1)\binom{n}{k}}, \quad k = 0, \dotsc, n - 1.
\]
Then
\[
\begin{split}
\frac{b_k}{b_{k - 1}} & = \frac{k(2k - 1)}{(n - k + 1)(2k + 1)} \frac{c_k}{c_{k - 1}} \\
& = \frac{(2n - 2k + 1)(4k + 1)}{(2k + 1)(4(n - k) + 1)} \\
& = \frac{8nk - 8k^2 + 2n + 2k + 1}{8nk - 8k^2 + 4n - 2k + 1}.
\end{split}
\]
We note that $b_k/b_{k - 1} \ge 1$ if, and only if, $k \ge n/2$; and
$b_k/b_{k - 1} \le 1$ if, and only if, $k \le n/2$.
Hence $b_k$, as a function of $k$, is first decreasing, may possibly be constant
for one step, and is then increasing. (If $n = 1$, then this is a vacuous statement, as
we have only $b_0$ in this case.)

From \eqref{eq:coefficients1} and \eqref{eq:coefficients2} for $k = 0$, we also
conclude that
\[
\frac{1 + c_{n - 1}}{(2n + 2)(2n + 1)} = \frac{c_{n - 1}}{2n(2n - 1)}.
\]
Solving this equation, we obtain
\[
c_{n - 1} = \frac{2n^2 - n}{4n + 1}.
\]
It then also follows from \eqref{eq:coefficients2} that
\[
c_0 = \frac{c_{n - 1}}{2n^2 - n} = \frac{1}{4n + 1}.
\]
This means that
\[
b_0 = c_0 = \frac{1}{4n + 1} \quad \text{and} \quad b_{n - 1} = \frac{c_{n - 1}}{2n^2 - n} = \frac{1}{4n + 1}.
\]
We conclude that $b_k \le \frac{1}{4n + 1}$ for all $k = 0, \dotsc, n - 1$,
i.e.,
\[
c_k \le \frac{2k + 1}{4n + 1} \binom{n}{k}.
\]
Because $c_k/c_{k - 1} > 0$ for every $k = 1, \dotsc, n - 1$, it is clear that
$c_k > 0$ for every $k = 1, \dotsc, n$. Therefore,
\[
0 \le \sum_{k = 0}^n c_k y_1^{2n - 2k} y_2^{2k} \le \sum_{k = 0}^n \binom{n}{k} y_1^{2n - 2k} y_2^{2k} = |y|^{2n}.
\]
The inequalities in \eqref{eq:second-derivatives-polynomial} follow.
We also have identity \eqref{eq:equality-polynomial} by construction.
\end{proof}

Recall that we consider the points $a^- = (0, -1)$ and $a^+ = (0, 1)$ here.
It follows from Proposition \ref{prp:polynomial} that $|y|^{2n}\bigl|1 - |y|^2\bigr| \in \mathcal{W}(a^-, a^+)$
for every $n \in \N$. Since
\[
|y|^{2n} \bigl|1 - |y|^{2m}\bigr| = (|y|^{2n} + \dotsb + |y|^{2n + 2m - 2}) \bigl|1 - |y|^2\bigr|,
\]
these potentials belong to $\mathcal{W}(a^-, a^+)$, too. Now for
$\beta \in (0, 1)$, we consider
\[
w(y) = \bigl|1 - |y|^2\bigr|^\beta.
\]
We define the function
\[
\psi(t) = (1 - t)^{\beta - 1}, \quad -1 < t < 1.
\]
We compute
\[
\psi^{(n)}(t) = (1 - \beta) \dotsb (n - \beta) (1 - t)^{\beta - n - 1}.
\]
The function is analytic in $(-1, 1)$ and we have the Taylor expansion
\[
\psi(t) = \sum_{n = 0}^\infty a_n t^n,
\]
where $a_n = \frac{\psi^{(n)}(0)}{n!} > 0$ for every $n \in \N_0$. We
therefore have the formula
\[
\bigl|1 - |y|^2\bigr|^\beta = (1 - |y|^2) \psi(|y|^2) = (1 - |y|^2) \sum_{n = 0}^\infty a_n |y|^{2n}
\]
in $B_1(0)$.

It is not clear if the space $\mathcal{W}(a^-, a^+)$ is closed in a suitable
topology, but examining the coefficients of the polynomials from
Proposition \ref{prp:polynomial}, it is not difficult to see that $w$
satisfies a condition like \eqref{eq:second-derivatives-bounded-by-w} and
\eqref{eq:equality-on-line-segment} in the unit ball. But it is still not clear
how to extend this observation to $\R^2$. Therefore, rather than using
the series, we use an approximation by
\[
w_n(y) = \bigl|1 - |y|^2\bigr| \sum_{k = 0}^n a_k |y|^{2k}.
\]
The limit, as $n \to \infty$, is
\[
w_\infty(y) = \begin{cases}
(1 - |y|^2)^\beta & \text{if $|y| \le 1$}, \\
\infty & \text{if $|y| > 1$}.
\end{cases}
\]
This function does of course not fit into the above theory, as it is not continuous.
Nevertheless, we can prove the following.

\begin{corollary} \label{cor:examples}
Let $a^- = (0, -1)$ and $a^+ = (0, 1)$.
\begin{enumerate}
\item If $W(y) = |y|^{4n}(1 - |y|^{2m})^2$ for some $n, m \in \N$, then
\[
\mathcal{E}(a^-, a^+) = \frac{4m}{(2n + 1)(2n + 2m + 1)}.
\]
\item Suppose that $W(y) = (1 - |y|^2)^{2\beta}$ for some $\beta \in (0, 1)$. If $(u_\epsilon)_{\epsilon > 0}$
is a family of vector fields from $\mathcal{U}(a^-, a^+)$ such that $|u_\epsilon| \le 1$ for every $\epsilon > 0$, then
\[
\liminf_{\epsilon \searrow 0} E_\epsilon(u_\epsilon; B_1(0)) \ge 4\int_0^1 (1 - t^2)^\beta \, dt.
\]
\end{enumerate}
\end{corollary}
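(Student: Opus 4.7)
The plan is to verify, for both parts, that $w$ (in part (i)) or a monotone approximation of it on $\overline{B_1(0)}$ (in part (ii)) satisfies the hypothesis of Corollary \ref{cor:PDE} with $\kappa \equiv 1$ on the segment $[a^-, a^+]$, so that the remark immediately after that corollary yields the sharp one-dimensional value. The backbone is the observation, set up just above this corollary, that $\mathcal{W}(a^-, a^+)$ is a convex cone containing every $y \mapsto |y|^{2k}\bigl|1 - |y|^2\bigr|$ with $k \in \N_0$ by Proposition \ref{prp:polynomial}. Given any $w \in \mathcal{W}(a^-, a^+)$, a corresponding $\phi$ produces the triple $(\iota, \kappa, \lambda) = \bigl(0,\, w^{-1}\partial_{y_2}^2 \phi,\, w^{-1}\partial_{y_1}^2 \phi\bigr)$ with $|\kappa|, |\lambda| \le 1$; the inequality required by Corollary \ref{cor:PDE} reduces to $0 \le (1 \pm \kappa)(1 \mp \lambda)$, automatic from these bounds, and $\kappa(a_1^-, t) = 1$ on $[a_2^-, a_2^+]$ by construction. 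Combined with the matching upper bound from the one-dimensional Modica--Mortola ansatz (as invoked in the proof of Corollary \ref{cor:1D}), this gives the equality $\mathcal{E}(a^-, a^+) = \int_{[a^-, a^+]} w \, d\Ha^1$.

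For part (i), I would factor
\[
|y|^{2n}\bigl|1 - |y|^{2m}\bigr| = \bigl|1 - |y|^2\bigr| \sum_{k = 0}^{m - 1} |y|^{2n + 2k}
\]
and invoke the cone property to place $w$ in $\mathcal{W}(a^-, a^+)$. The explicit computation
\[
\int_{-1}^1 t^{2n}(1 - t^{2m}) \, dt = \frac{2}{2n + 1} - \frac{2}{2n + 2m + 1} = \frac{4m}{(2n + 1)(2n + 2m + 1)}
\]
then gives the claimed value of $\mathcal{E}(a^-, a^+)$.

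For part (ii), $w(y) = (1 - |y|^2)^\beta$ is not defined as a real $C^2$ function outside $\overline{B_1(0)}$, so Corollary \ref{cor:PDE} cannot be invoked for $w$ directly. Instead I truncate the Taylor expansion $(1 - t)^{\beta - 1} = \sum_{k \ge 0} a_k t^k$, whose coefficients are strictly positive because $\beta - 1 < 0$, and set $w_n(y) = \bigl|1 - |y|^2\bigr| \sum_{k = 0}^n a_k |y|^{2k}$. Each $w_n$ is defined globally on $\R^2$, belongs to $\mathcal{W}(a^-, a^+)$ by the cone argument, and satisfies $0 \le w_n \nearrow (1 - |y|^2)^\beta$ pointwise on $B_1(0)$. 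The hypothesis $|u_\epsilon| \le 1$ is now decisive: all evaluations of the potential fall in $\overline{B_1(0)}$, where $W_n := w_n^2 \le w^2 = W$, so the same family $(u_\epsilon)_{\epsilon > 0}$ still belongs to $\mathcal{U}(a^-, a^+)$ relative to the potential $W_n$, and the energy associated with $W_n$ is pointwise dominated by $E_\epsilon(u_\epsilon; B_1(0))$. Combining this with the case (i) reasoning applied to $W_n$ and passing to the limit by monotone convergence yields
\[
\liminf_{\epsilon \searrow 0} E_\epsilon(u_\epsilon; B_1(0)) \ge 2 \int_{-1}^1 w_n(0, t) \, dt \longrightarrow 2 \int_{-1}^1 (1 - t^2)^\beta \, dt = 4 \int_0^1 (1 - t^2)^\beta \, dt,
\]
which in particular implies the bound stated in the corollary.

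The only delicate point is a bookkeeping verification: one must check that the polynomial auxiliary functions $\phi_n = \sum_{k = 0}^n a_k P_k$, built from the polynomials $P_k$ of Proposition \ref{prp:polynomial}, satisfy the global regularity condition $\iota_n w_n, \kappa_n w_n, \lambda_n w_n \in C^2(\R^2) \cap C_{\bar{p}}^1(\R^2)$ required by Corollary \ref{cor:PDE} on all of $\R^2$, not merely inside the unit disc. This is automatic because $\phi_n$ is a polynomial and the products $\kappa_n w_n$, $\lambda_n w_n$ are just its second derivatives, so the exponent $\bar{p}$ in the growth condition can always be chosen large enough (depending on $n$). Conceptually, the proof is inherently tied to the boundedness $|u_\epsilon| \le 1$: the approximants $W_n$ grow rapidly outside $\overline{B_1(0)}$ and there $W_n \le W$ fails, so without the confinement of the values of $u_\epsilon$ there is no way to replace $W$ by the regular $W_n$ inside the energy.
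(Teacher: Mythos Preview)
Your proof is correct and follows essentially the same route as the paper: for part (i) you use the factorisation $|y|^{2n}\bigl|1 - |y|^{2m}\bigr| = \bigl|1 - |y|^2\bigr|\sum_{k=0}^{m-1}|y|^{2n+2k}$ together with the cone property of $\mathcal{W}(a^-,a^+)$ and Proposition \ref{prp:polynomial}, and for part (ii) you approximate $(1-|y|^2)^\beta$ from below on $\overline{B_1(0)}$ by the polynomial truncations $w_n$, exploit $|u_\epsilon|\le 1$ to compare the energies, and pass to the limit by monotone convergence. Your final bound $4\int_0^1(1-t^2)^\beta\,dt$ is indeed what the paper's argument yields as well (and is stronger than the stated $2\int_0^1$).
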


In the second statement, the value of the lower bound is identical to
\[
2\int_{[a^-, a^+]} \sqrt{W} \, d\Ha^1.
\]
(We have the constant $2$ because when integrating over
$B_1(0)$, we effectively account for a transition interface of length $2$.)
Thus in both statements, we conclude
that one-dimensional transition profiles are energetically optimal
under the respective assumptions.

\begin{proof}
The first statement follows immediately from Proposition \ref{prp:polynomial} and Corollary \ref{cor:PDE}
by the above observations.

To prove the second statement, we consider the potentials
\[
w_n(y) = \bigl|1 - |y|^2\bigr| \sum_{k = 0}^n a_k |y|^{2k},
\]
as explained above. We know that $w_n \in \mathcal{W}(a^-, a^+)$. Hence by Corollary \ref{cor:PDE},
\[
\frac{1}{4} \liminf_{\epsilon \searrow 0} \int_{B_1(0)} \left(\epsilon |Du_\epsilon|^2 + \frac{1}{\epsilon} (w_n(u_\epsilon))^2\right) \, dx \ge \int_{[a^-, a^+]} w_n \, d\Ha^1.
\]
Since $|u_\epsilon| \le 1$ for every $\epsilon > 0$, it follows that
\[
\frac{1}{4} \liminf_{\epsilon \searrow 0} \int_{B_1(0)} \left(\epsilon |Du_\epsilon|^2 + \frac{1}{\epsilon} \bigl(1 - |u_\epsilon|^2\bigr)^{2\beta}\right) \, dx \ge \int_{[a^-, a^+]} w_n \, d\Ha^1
\]
as well. Letting $n \to \infty$, we conclude that
\[
\frac{1}{4} \liminf_{\epsilon \searrow 0} \int_{B_1(0)} \left(\epsilon |Du_\epsilon|^2 + \frac{1}{\epsilon} \bigl(1 - |u_\epsilon|^2\bigr)^{2\beta}\right) \, dx \ge \int_{[a^-, a^+]} w \, d\Ha^1.
\]
This is the inequality from the statement in a different form.
\end{proof}

\subsection{Other candidates for minimisers}

We cannot expect that $T^0$ will always be a minimiser of $\M_F$. In this section,
we therefore have a look at some other elements of $\ncrt_{2 \times 2}^0$ that
may be minimisers for certain potential functions $W$.
We do not have any specific results here, but we do have some examples indicating
that there may be a deeper relationship between the elements of $\ncrt_{2 \times 2}^0$ and possible
transition profiles.

In Section \ref{sct:geometric}, we decomposed the first component of an $\R^2$-valued
current into curves from $a^-$ to $a^+$ (and possibly some closed curves). Conversely, given such a curve, we may wish
to consider the corresponding $\R$-valued $1$-current and complement it with a second
component to obtain an element of $T_{2 \times 2}^0$. Since we require that
$\M_F(T) < \infty$, however, we will need to make sure that $\tr \vec{T} = 0$
away from $W^{-1}(\{0\})$. This condition, on the other hand, gives rise to significant restrictions
on what is possible. For most curves from $a^-$ to $a^+$, there is no second component
with the required properties. But we can instead consider a \emph{pair} of curves that are
symmetric with respect to reflection on $[a^-, a^+]$.

We still assume that $a = (0, -1)$ and $a^+ = (0, 1)$.
Suppose now that $\gamma \colon [0, 1] \to \R^2$ is Lipschitz continuous with $\gamma(0) = a^-$ and
$\gamma(1) = a^+$, such that $\dot{\gamma}_2(t) \neq 0$ at almost every $t \in [0, 1]$ and such that
the function $\psi = \dot{\gamma}_1/\dot{\gamma}_2$ is of bounded variation in $[0, 1]$.
Its derivative, denoted by $\dot{\psi}$, is therefore given by a measure on $[0, 1]$.
We write $|\dot{\psi}|$ for its total variation measure.

Define $T \in \ncrt_{2 \times 2}$ by
\[
\begin{split}
T(\zeta) & = \frac{1}{2} \int_0^1 \frac{1}{\dot{\gamma}_2(t)} \begin{pmatrix} \dot{\gamma}_1(t) \dot{\gamma}_2(t) & (\dot{\gamma}_2(t))^2 \\ - (\dot{\gamma}_1(t))^2 & - \dot{\gamma}_1(t) \dot{\gamma}_2(t) \end{pmatrix} : \zeta(\gamma(t)) \, dt \\
& \quad + \frac{1}{2} \int_0^1 \frac{1}{\dot{\gamma}_2(t)} \begin{pmatrix} -\dot{\gamma}_1(t) \dot{\gamma}_2(t) & (\dot{\gamma}_2(t))^2 \\ -(\dot{\gamma}_1(t))^2 & \dot{\gamma}_1(t) \dot{\gamma}_2(t) \end{pmatrix} : \zeta(-\gamma_1(t), \gamma_2(t)) \, dt \\
& \quad - \frac{1}{2} \int_0^1 \int_{- \gamma_1(t)}^{\gamma_1(t)} \zeta_{21}(s, \gamma_2(t)) \, ds \, d\dot{\psi}(t).
\end{split}
\]
We note that $T_1 = \frac{1}{2}([\gamma] + [\gamma^\dagger])$, where $\gamma^\dagger(t) = (-\gamma_1(t), \gamma_2(t))$.
For any $\xi \in C_0^\infty(\R^2; \R^2)$, we compute
\[
\begin{split}
T(D\xi) & = \frac{1}{2} \int_0^1 \bigl(D\xi_1(\gamma(t)) \dot{\gamma}(t) - \psi(t) D\xi_2(\gamma(t)) \dot{\gamma}(t)\bigr) \, dt \\
& \quad + \frac{1}{2} \int_0^1 \bigl(D\xi_1(\gamma(t)) \dot{\gamma}^\dagger(t) + \psi(t) D\xi_2(\gamma(t)) \dot{\gamma}^\dagger(t)\bigr) \, dt \\
& \quad - \frac{1}{2} \int_0^1 \int_{-\gamma_1(t)}^{\gamma_1(t)} \dd{\xi_2}{y_1}(s, \gamma_2(t)) \, ds \, d\dot{\psi}(t) \\
& = \xi_1(a^+) - \xi_1(a^-) - \frac{1}{2} \int_0^1 \psi(t) \frac{d}{dt}\bigl(\xi_2(\gamma(t)) - \xi_2(\gamma^\dagger(t))\bigr) \, dt \\
& \quad - \frac{1}{2} \int_0^1 \bigl(\xi_2(\gamma(t)) - \xi_2(\gamma^\dagger(t))\bigr) \, d\dot{\psi}(t) \\
& = \xi_1(a^+) - \xi_1(a^-).
\end{split}
\]
Hence $T \in \ncrt_{2 \times 2}^0$. We further compute
\[
\begin{split}
\M_F(T) & = \frac{1}{4} \int_0^1 \left(\sqrt{W(\gamma(t))} + \sqrt{W(\gamma^\dagger(t))}\right) \frac{|\dot{\gamma}(t)|^2}{|\dot{\gamma}_2(t)|} \, dt \\
& \quad + \frac{1}{4} \int_0^1 \int_{-\gamma_1(t)}^{\gamma_1(t)} \sqrt{W(s, \gamma_2(t))} \, ds \, d|\dot{\psi}|(t).
\end{split}
\]

We now look at two specific examples of this type.

\begin{example} \label{exm:JK}
Let $b_1 > 0$, and consider the points $b^+ = (b_1, 0)$ and $b^- = (-b_1, 0)$. Suppose that
\[
\gamma(t) = \begin{cases}
(2tb_1, 2t - 1) & \text{if $0 \le t \le \frac{1}{2}$}, \\
((2 - 2t)b_1, 2t - 1) & \text{if $\frac{1}{2} < t \le 1$}
\end{cases}
\]
(a curve consisting of a line segment from $a^-$ to $b^+$ and a line segment from $b^+$ to $a^+$).
Then
\[
\psi(t) = \begin{cases}
b_1 & \text{if $0 \le t \le \frac{1}{2}$}, \\
-b_1 & \text{if $\frac{1}{2} < t \le 1$},
\end{cases}
\]
and we compute
\[
\M_F(T) = \frac{1}{4} \sqrt{b_1^2 + 1} \int_\lozenge \sqrt{W} \, d\Ha^1 + \frac{b_1}{2} \int_{[b^-, b^+]} \sqrt{W} \, d\Ha^1,
\]
where we use the abbreviation $\lozenge = [a^-, b^+] \cup [b^+, a^+] \cup [a^-, b^-] \cup [b^-, a^+]$.

Compare this with the construction by Jin and Kohn \cite[Section 4]{Jin-Kohn:00} of a
non-one-dimensional transition profile between $a^-$ and $a^+$. This is a two-scale
construction, where the coarser scale is given by
\[
\tilde{u}_0(x) = \begin{cases} (0, -1) & \text{if $x_1 \le - b_1 |x_2|$}, \\
(0, 1) & \text{if $x_1 \ge b_1 |x_2|$}, \\
(-b_1, 0) & \text{if $|x_1| < b_1 x_2$}, \\
(b_1, 0) & \text{if $|x_1| < -b_1 x_2$},
\end{cases}
\]
for $-1 < x_2 \le 1$. This is extended periodically in $x_2$, with period $2$, to the whole of $\R^2$.
Thus $\tilde{u}_0$ is piecewise constant, with a jump set as illustrated in Figure \ref{fig:JK}.
\begin{figure}
\begin{center}
\includegraphics[height=6cm]{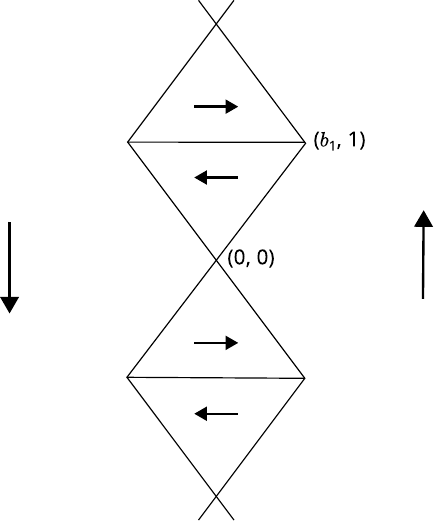}
\end{center}
\caption{A construction of Jin and Kohn \cite{Jin-Kohn:00}}
\label{fig:JK}
\end{figure}

Next, we construct $\tilde{u}_\epsilon \colon \R^2 \to \R^2$ as follows: we replace the jumps in
$\tilde{u}_0$ by the standard one-dimensional transitions with a width of
order $\epsilon$
(as explained, e.g., in a paper by Ignat and Monteil \cite[Proposition 4.1]{Ignat-Monteil:20}).
This requires some smoothing near the corners, which can be done with an insignificant gain of energy
and a small change of the divergence. (The details are tedious and
are omitted here.) We then compute
\[
\lim_{\epsilon \searrow 0} E_\epsilon(\tilde{u}_\epsilon; \R \times [s - 1, s + 1]) = \sqrt{b_1^2 + 1} \int_\lozenge \sqrt{W} \, d\Ha^1 + 2b_1 \int_{[b^-, b^+]} \sqrt{W} \, d\Ha^1
\]
for any $s \in \R \setminus \Z$.

But we want convergence to $u_0$, not $\tilde{u}_0$, as $\epsilon \searrow 0$.
Therefore, we now rescale by a parameter $\eta_\epsilon > 0$, which converges to $0$ when $\epsilon$
does, but at a slower rate; i.e., we assume that $\eta_\epsilon \to 0$ and $\epsilon/\eta_\epsilon \to 0$.
We set $u_\epsilon(x) = \tilde{u}_{\epsilon/\eta_\epsilon}(x/\eta_\epsilon)$. Then
\[
\lim_{\epsilon \searrow 0} E_\epsilon(u_\epsilon; B_1(0)) = \sqrt{b_1^2 + 1} \int_\lozenge \sqrt{W} \, d\Ha^1 + 2b_1 \int_{[b^-, b^+]} \sqrt{W} \, d\Ha^1.
\]
Therefore,
\[
\mathcal{E}(a^-, a^+) \le \frac{1}{2} \sqrt{b_1^2 + 1} \int_\lozenge \sqrt{W} \, d\Ha^1 + b_1 \int_{[b^-, b^+]} \sqrt{W} \, d\Ha^1.
\]
If the current $T$ happens to minimise $\M_T$ in $\ncrt_{2 \times 2}^0$, then we have equality
by Theorem \ref{thm:main}. Therefore, in this case, the above construction gives the optimal
energy asymptotically.
\end{example}

\begin{example} \label{exm:cross-tie}
Now suppose that $W(y) = 0$ for all $y \in S^1$. Choose $b_1, b_2 \in [0, 1]$ with $b_1^2 + b_2^2 = 1$, and define
$b^{(1)} = (b_1, -b_2)$, $b^{(2)} = (b_1, b_2)$, $b^{(3)} = (-b_1, b_2)$,
and $b^{(4)} = (-b_1, -b_2)$. Let $\theta = \arcsin b_1$ and consider the curve
\[
\gamma(t) = \begin{cases}
(\sin(\pi t), -\cos(\pi t)) & \text{if $0 \le t \le \frac{\theta}{\pi}$ or $1 - \frac{\theta}{\pi} \le t \le 1$}, \\
(b_1, \frac{(2t - 1)\pi}{\pi - 2\theta}b_2) & \text{if $\frac{\theta}{\pi} < t < 1 - \frac{\theta}{\pi}$}
\end{cases}
\]
(consisting of a circular arc from $a^-$ to $b^{(1)}$, a line segment from $b^{(1)}$ to $b^{(2)}$,
and another circular arc from $b^{(2)}$ to $a^+$). Then
\[
\psi(t) = \begin{cases}
\cot(\pi t) & \text{if $0 \le t \le \frac{\theta}{\pi}$ or $1 - \frac{\theta}{\pi} \le t \le 1$}, \\
0 & \text{if $\frac{\theta}{\pi} < t < 1 - \frac{\theta}{\pi}$}.
\end{cases}
\]

This function is not of bounded variation (not even bounded), and so, strictly speaking, the above
calculations do not apply. We ignore this problem for the sake of simplicity.
We will obtain an $\R^2$-valued current on $\R^2 \setminus \{a^-, a^+\}$ instead of $\R^2$, which
can, however, be approximated with elements of $\ncrt_{2 \times 2}^0$.

Define $V = [b^{(1)}, b^{(2)}] \cup [b^{(4)}, b^{(3)}]$ and $H = [b^{(4)}, b^{(1)}] \cup [b^{(3)}, b^{(2)}]$,
and also define $D = B_1(0) \setminus (\R \times [-b_2, b_2])$. Then we compute
\[
\begin{split}
\M_F(T) & = \frac{1}{4} \int_V \sqrt{W} \, d\Ha^1 + \frac{b_2}{4b_1} \int_H \sqrt{W} \, d\Ha^1 \\
& \quad + \frac{\pi}{4} \int_{[0, \theta] \cup [1 - \theta, 1]} \frac{1}{\sin^2(\pi t)} \int_{-\sin(\pi t)}^{\sin(\pi t)} \sqrt{W(s, -\cos(\pi t))} \, ds \, dt \\
& = \frac{1}{4} \int_V \sqrt{W} \, d\Ha^1 + \frac{b_2}{4b_1} \int_H \sqrt{W} \, d\Ha^1 + \frac{1}{4} \int_D \frac{\sqrt{W(y)}}{(1 - y_2^2)^{3/2}} \, dy. \\
\end{split}
\]
(This may be infinite, unless we impose additional conditions on $W$ at $a^\pm$.)

Compare this with the following transition profile, which is called a cross-tie wall
in the theory of micromagnetics \cite{DeSimone-Kohn-Mueller-Otto:03, Alouges-Riviere-Serfaty:02}. This is a two-scale construction again, and the coarser
scale is given by
\[
\tilde{u}_0(x) = \begin{cases}
b^{(1)} & \text{if $x_1 < 0$, $x_2 < 0$, and $-b_1 x_1 + b_2 x_2 < 0$}, \\
b^{(2)} & \text{if $x_1 > 0$, $x_2 < 0$, and $b_1 x_1 + b_2 x_2 < 0$}, \\
b^{(3)} & \text{if $x_1 > 0$, $x_2 > 0$, and $-b_1 x_1 + b_2 x_2 > 0$}, \\
b^{(4)} & \text{if $x_1 < 0$, $x_2 > 0$, and $b_1 x_1 + b_2 x_2 > 0$}, \\
\frac{x^\perp}{|x|} & \text{else},
\end{cases}
\]
when $-b_1 < x_2 \le b_1$. This is extended periodically in $x_2$, with period $2b_1$, so that $\tilde{u}_0$ is defined on all of $\R^2$.
The result is illustrated in Figure 2.
\begin{figure}
\begin{center}
\includegraphics[height=6cm]{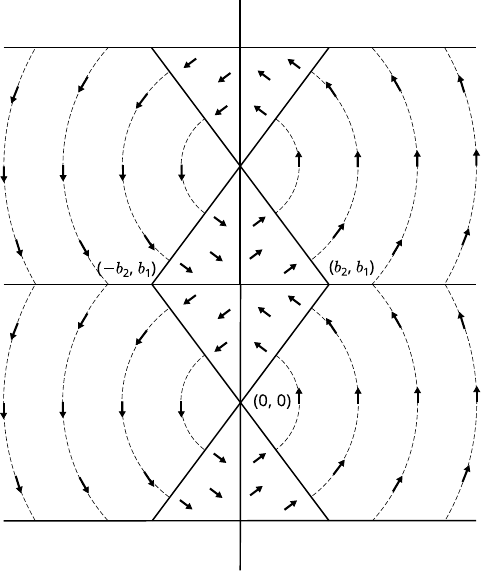}
\end{center}
\caption{The cross-tie wall profile}
\label{fig:cross-tir}
\end{figure}

The vector field $\tilde{u}_0$ has discontinuities along the lines $\{0\} \times \R$ and $\R \times \{(1 + 2k)b_1\}$ for every $k \in \Z$.
On the finer scale, these have to be replaced by smooth transitions again.
For $\epsilon > 0$, we choose the standard one-dimensional transitions, with a width of order $\epsilon$, along the line segments $\{0\} \times [-b_1, 0]$
(where we have a transition between $b^{(1)}$ and $b^{(2)}$) and $\{0\} \times [0, b_1]$ (with a transition between $b^{(3)}$ and $b^{(4)}$), as well as
$[-b_2, 0] \times \{b_1\}$ (with a transition between $b^{(4)}$ and $b^{(1)}$)
and $[0, b_2] \times \{b_1\}$ (with a transition between $b^{(3)}$ and $b^{(2)}$). This part of the construction is similar to Example \ref{exm:JK}.

Along $(-\infty, -b_2] \times \{b_1\}$ and $[b_2, \infty) \times \{b_1\}$, we still use a similar construction,
but since the jump in $\tilde{u}_0$ depends on the position,
the result will not truly be one-dimensional here. At the point $(x_1, b_1)$ with $|x_1| \ge b_2$, we have a jump between the points
\[
\frac{(-b_1, x_1)}{\sqrt{b_1^2 + x_1^2}} \quad \text{and} \quad \frac{(b_1, x_1)}{\sqrt{b_1^2 + x_1^2}}.
\]
We replace this with a smooth transition, again with width of order $\epsilon$, along the horizontal line segment between these two points in $\R^2$.
Once more this requires some smoothing at the corners, and in the end everything is extended periodically to $\R^2$.

We will have some divergence at the corners, and also near $(-\infty, -b_2] \times \{b_1\}$ and $[b_2, \infty) \times \{b_1\}$, with this construction.
We can, however, achieve that a condition similar to \eqref{eq:small-divergence} holds in any compact subset of $\R^2$. Once more, the details are omitted.

Finally, we rescale at a rate $\eta_\epsilon > 0$ as $\epsilon \searrow 0$,
where $\eta_\epsilon \to 0$, but sufficiently slowly that \eqref{eq:small-divergence} remains true.

Calculating the energy of the cross-tie wall per unit wall length, we find that
\[
\mathcal{E}(a^+, a^-) \le \frac{1}{2} \int_V \sqrt{W} \, d\Ha^1 + \frac{b_2}{2b_1} \int_H \sqrt{W} \, d\Ha^1 + \frac{1}{2} \int_D \frac{\sqrt{W(y)}}{(1 - y_2^2)^{3/2}} \, dy.
\]
If suitable approximations of $T$ give rise to a minimising sequence of $\M_F$ in $\ncrt_{2 \times 2}^0$, then Theorem \ref{thm:main}
implies that we have equality, and the cross-tie wall is energetically optimal.
\end{example}

\begin{acknowledgement}
RI is partially supported by the ANR projects ANR-21-CE40-0004 and ANR-22-CE40-0006-01. RM is partially supported by the Engineering and Physical Sciences Research Council [grant number EP/X017206/1].
\end{acknowledgement}

\def\cprime{$'$}
\providecommand{\bysame}{\leavevmode\hbox to3em{\hrulefill}\thinspace}
\providecommand{\MR}{\relax\ifhmode\unskip\space\fi MR }
\providecommand{\MRhref}[2]{%
  \href{http://www.ams.org/mathscinet-getitem?mr=#1}{#2}
}
\providecommand{\href}[2]{#2}

\end{document}